\documentclass{amsart}
\usepackage{enumitem,mathtools,url,tikz-cd,textcomp,hyperref,
amsfonts,amssymb,amsbsy,amsmath,
pifont,latexsym,ifthen,amsthm,rotating,calc,textcase,booktabs,
graphicx}
\usepackage[errorshow]{tracefnt}

\theoremstyle{definition}
\newtheorem{theorem}{Theorem}[section]
\newtheorem{lemma}[theorem]{Lemma}
\newtheorem{corollary}[theorem]{Corollary}
\newtheorem{proposition}[theorem]{Proposition}

\numberwithin{equation}{section}

\hypersetup{
     colorlinks   = true,
     citecolor    = blue,
     linkcolor    = blue
}
\allowdisplaybreaks
\newcommand{\up}[1]{^{(#1)}}
\newcommand{\case}[1]{\begin{cases}#1\end{cases}}

\newcommand{\A}{\mathbb{A}}
\newcommand{\Q}{\mathbb{Q}}

\newcommand{\Z}{\mathbb{Z}}
\newcommand{\F}{\mathbb{F}}
\newcommand{\C}{\mathbb{C}}

\newcommand{\N}{\mathbb{N}}

\newcommand{\Norm}{\operatorname{N}}

\newcommand{\Tr}{\operatorname{Tr}}

\newcommand{\ov}{\overline}

\newcommand{\Gm}[1]{\mathbb{G}_{{\rm m},#1}}

\newcommand{\Frob}{{\rm Frob}}
\newcommand{\bbP}{\mathbb{P}}

\newtheorem{remark}[theorem]{Remark}
\DeclareMathOperator{\rank}{rank}

\DeclareMathOperator{\ord}{ord}

\DeclareMathOperator{\Char}{char}
\DeclareMathOperator{\Span}{span}

\DeclareMathOperator{\Gal}{Gal}

\DeclareMathOperator{\Spec}{Spec}

\DeclareMathOperator{\Hom}{Hom}

\DeclareMathOperator{\GL}{GL}

\DeclareMathOperator{\codim}{codim}
\DeclarePairedDelimiter{\abs}{\lvert}{\rvert}

\DeclarePairedDelimiter{\angles}{\langle}{\rangle}
\DeclarePairedDelimiter{\parens}{(}{)}

\DeclarePairedDelimiter{\floor}{\lfloor}{\rfloor}
\DeclarePairedDelimiter{\ceil}{\lceil}{\rceil}

\begin{document}

       \address{Department of Mathematics, Indiana University, 831 E. Third St., Bloomington, IN 47405, USA}
       \email{xu56@indiana.edu}
       
\title{Stratification for multiplicative character sums}
\author{Junyan Xu}



\begin{abstract}
We prove a stratification result for certain families of $n$-dimensional (complete algebraic) multiplicative character sums. The character sums we consider are sums of products of $r$ multiplicative characters evaluated at rational functions, and the families (with $nr$ parameters) are obtained by allowing each of the $r$ rational functions to be replaced by an ``offset", i.e. a translate, of itself. For very general such families, we show that the stratum of the parameter space on which the character sum has maximum weight $n+j$ has codimension at least $j\floor{(r-1)/2(n-1)}$ for $1\le j\le n-1$ and $\ceil{nr/2}$ for $j=n$.
\end{abstract}

\maketitle


\section{Introduction} 

In this paper we are interested in multiplicative character sums of the following form:
\[S:=\sum_{m\in\kappa^n}\chi_1(F_1(m))\chi_2(F_2(m))\dots\chi_r(F_r(m)),\]
where $\kappa$ is a finite field, $F_i\in\kappa[x_1,\dots,x_n]$, and $\chi_i:\kappa^\times\to\C^\times$ is a multiplicative character (extended to $\kappa$ by stipulating $\chi_i(0)=0$), for each $1\le i\le r$.

It is reasonable to expect square root cancellation for generic polynomials $F_i$, namely, that $\abs{S}\le C(\#\kappa)^{n/2}$ for some constant $C=C(n,r,\{\deg F_i\})$ independent of $\kappa$ for generic choices of the $F_i$'s (with respect to the $\chi_i$'s). 
However, character sums of this form seem difficult 
to deal with, especially if square root cancellation is desired.
One can certainly find a multiplicative character $\chi$ and integers $e_i\ge0$ to write $\chi_i=\chi^{e_i}$, so that $S=\sum_{m\in\kappa^n}\chi(F_1(m)^{e_1}F_2(m)^{e_2}\dots F_r(m)^{e_r})$.
But the square root cancellation result of Katz \cite{nonsing mult} about sums of the form $\sum_m \chi(F(m))$ requires that the homogeneous part of highest degree (the ``leading form") of $F$ defines a nonsingular projective variety, 
which is obviously not the case for our sums as soon as $r>1$ or some $e_i>1$. A generalization of Katz's result by Rojas-Le\'on \cite{sing mult} allows singular leading forms, but the ability to establish square root cancellation is lost with the presence of a single singular point.
A subsequent paper of Rojas-Le\'on \cite{purity2} allows the leading form to be a product of polynomials, but the result applies to additive characters only, and also requires that the factors of the leading form together define a nonsingular variety, among other conditions. 

The present paper confirms that if the $F_i$'s are each allowed to vary independently within an ``offset family" (the family of polynomials $F_i(\,\,\cdot\,+x\up{i})$ parametrized by the ``offset" $x\up{i}\in\kappa^n$),
then for generic members of this family, square root cancellation indeed holds as long as $r\ge 2n-1$.
In fact we are able to obtain a stratification result in the sense of Fouvry and Katz \cite{FouvryKatz}, i.e. to bound the dimensions of the subscheme (the stratum) on which the character sum has maximum weight $n+j$,
for each $1\le j\le n$. Having maximum weight $w$ means being a sum of a bounded number of complex numbers of absolute values $\le(\#\kappa)^{w/2}$, so maximum weight $n$ leads to square root cancellation.
To formulate the precise statement of our results, we first introduce the following

{\bf Notations, Conventions, and Definitions.}
If $\chi$ is a multiplicative character, let $\ord\chi$ denote its order. A rational function $F\in\kappa(x_1,\dots,x_n)$ is called $d$th-power-free if each irreducible factor of $F$ has multiplicity
strictly between $-d$ and $d$. We think of a rational function $F\in\kappa(x_1,\dots,x_n)$ as the quotient of two fixed polynomials $G,H\in\kappa[x_1,\dots,x_n]$, define its degree $\deg F$ as $\max\{\deg G,\deg H\}$, and stipulate that $\chi(F(x))=0$ if $G(x)=0$ or $H(x)=0$, where $x$ is the $n$-tuple $(x_1,\dots,x_n)$. Similarly, we use $x\up i$ to denote an $n$-tuple $(x\up i_1,\dots,x\up i_n)$.

For a subscheme $X\subset\A^{nr}_{\kappa}$, define its degree $\deg X$ to be the degree of its closure in $\bbP^{nr}_{\kappa}$.

Define the constants
\[ \theta_j=\theta_j(n,r):= \case{ ja +\max\{0,b+j-(n-1)\} & \text{if }0\le j\le n-1,\\
\ceil{nr/2} & \text{if }j=n,} \]
if we write $\floor{\frac{r-1}{2}}=(n-1)a+b$ with $a\in\N$ and $0\le b<n-1$.
In particular, $\theta_0=0$, $\theta_1=\floor*{\frac{r-1}{2(n-1)}}$, $\theta_{n-1}=\floor{\frac{r-1}{2}}$, and in general $\theta_j\ge j\floor*{\frac{r-1}{2(n-1)}}$ if $n\ge2$.

A variety in this paper is an integral separated scheme of finite type over a base field, not necessarily algebraically closed.

We now state the main theorem of this paper.

\begin{theorem}\label{main}
There exist integers $C,C'\in\N$ and a finite set $\mathcal{S}$ (whose elements are called exceptional primes) that depend on four parameters $n,r,d,D$ such that the following holds.

For each $1\le i\le r$, assume that $d_i:=\ord\chi_i\mid d>0$, let $F_i\in\kappa(x_1,\dots,x_n)$ be a $d_i$th-power-free rational function of degree at most $D$
such that $T_{F_i}:=\{m\in\ov{\kappa}^n\mid F_i(x)\equiv F_i(x+m)\}$ is finite for each $1\le i\le r$, and consider the following family of character sums parametrized by $(x\up{1},\dots,x\up{r})\in\kappa^{nr}$:
\[S(x\up{1},\dots,x\up{r}):=\sum_{m\in\kappa^n}\prod_{i=1}^r\chi_i(F_i(m+x\up{i})).\]
Then whenever ${\rm char\,}\kappa\notin\mathcal{S}$, there exist subschemes $\A^{nr}_\kappa=X_0\supset X_1\supset X_2\supset\dots\supset X_n$, such that the sum of degrees of irreducible components of each $X_j$ is at most $C'$, and such that $\codim X_j\ge\theta_j$ (i.e. $\dim X_j\le nr-\theta_j$) and \[\abs{S(x\up 1,\dots,x\up r)}\le C(\#\kappa)^{(n+j-1)/2}\]
for each $1\le j\le n$ and $(x\up 1,\dots,x\up r)\in\A^{nr}(\kappa)\setminus X_j(\kappa)$. 
\end{theorem}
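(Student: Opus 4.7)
The strategy is to realize $S$ as a trace function of an $\ell$-adic complex on $\mathbb{A}^{nr}$ and read off the strata from its weight filtration. Let $\mathcal{L}_i := \mathcal{L}_{\chi_i}(F_i)$ denote the Kummer sheaf on $\mathbb{A}^n$ (extended by zero across the divisor of $F_i$); let $\mu_i \colon \mathbb{A}^n \times \mathbb{A}^{nr} \to \mathbb{A}^n$ be the addition map $(m, x\up 1, \dots, x\up r) \mapsto m + x\up i$, and set
\[ \mathcal{F} := \bigotimes_{i=1}^r \mu_i^* \mathcal{L}_i, \qquad \mathcal{G} := R\pi_! \mathcal{F}, \]
where $\pi$ is the projection to $\mathbb{A}^{nr}$. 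By the Grothendieck--Lefschetz trace formula, $S(x) = \sum_i (-1)^i \Tr(\Frob_\kappa \mid \mathcal{H}^i\mathcal{G}_{\bar x})$, and by Deligne's main theorem each $\mathcal{H}^i\mathcal{G}$ is mixed of weight $\le i$. Setting $X_j := \bigcup_{i \ge n+j} \mathrm{supp}(\mathcal{H}^i\mathcal{G})$, the bound $|S(x)| \le C(\#\kappa)^{(n+j-1)/2}$ on the complement of $X_j$ follows from Deligne plus a B\'ezout / Euler-characteristic estimate bounding the total rank of $\mathcal{F}$ uniformly in $(n,r,d,D)$; the same estimate supplies the constants $C, C'$ and the finite exceptional set $\mathcal{S}$.

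The substantive content is $\codim X_j \ge \theta_j$. For the top stratum $j = n$, proper base change identifies $X_n$ with the locus where $H^{2n}_c(\mathbb{A}^n_{\bar\kappa}, \mathcal{F}_{\bar x}) \ne 0$; by Poincar\'e duality this is where the rank-one tensor $\mathcal{F}_{\bar x}$ is geometrically trivial on its lisse locus, equivalently where the divisor $\sum_i e_i \tau_{x\up i}^*\mathrm{div}(F_i)$ (with $e_i = d/d_i$) is divisible by $d$ in $\mathrm{Div}(\mathbb{A}^n_{\ov\kappa})$. The $d_i$-th-power-freeness forces each individual contribution to lie in $(-d, d)$, so divisibility requires every appearing irreducible component to receive contributions from at least two distinct indices~$i$; the finiteness of $T_{F_i}$ (and of stabilizers of the irreducible factors of $F_i$) then lets each such matching impose $n$ algebraic conditions on the offset pair, and the combinatorial count of pairings (worst case: $r/2$ pairs when $r$ is even) produces $\codim X_n \ge \lceil nr/2\rceil$. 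For the intermediate strata $1 \le j \le n-1$, the plan is an induction on $(j, n)$ via a Fouvry--Katz-style slicing argument \cite{FouvryKatz}: restricting $\mathcal{G}$ along a generic affine subspace of one offset factor $\mathbb{A}^n \subseteq \mathbb{A}^{nr}$ reduces the problem to the analogous stratification for $r$ rational functions in $n-1$ variables, and the hypotheses on $F_i$ descend to generic slices; each slicing trades one ambient direction for one unit of weight, so iterating produces the linear minorant $\theta_j \ge j\floor{(r-1)/(2(n-1))}$.

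The \emph{main obstacle} is extracting the sharp codimension $\theta_j = ja + \max\{0, b+j-(n-1)\}$ for intermediate $j$, rather than only its linear minorant. The $+1$ slope increment, which kicks in once $j > n-1-b$, reflects the fact that after $n-1-b$ iterations of the slicing have consumed that many offset directions, the residual offset budget is too small to accommodate further degeneracies without an additional codimension penalty. Orchestrating the induction so that this piecewise-linear behaviour is captured exactly, while propagating the hypotheses to each generic slice and keeping all constants and exceptional primes uniform in the fiber, is where the technical heart of the argument will lie.
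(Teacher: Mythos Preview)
Your setup is essentially the paper's: realize $S$ as the trace function of $R\pi_!\mathcal{F}$, stratify by maximal weight, and invoke Deligne plus a uniform Betti bound for the constants $C,C'$ and the exceptional set $\mathcal{S}$. Your treatment of $j=n$ is also on target and matches the paper's Corollary~\ref{power count}: the top cohomology survives exactly when the product $\prod_i F_i(x+x\up i)^{e_i}$ is a $d$th power, and the combinatorial matching argument together with finiteness of the translation stabilizers gives $\codim X_n\ge\lceil nr/2\rceil$.

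The gap is in your plan for $1\le j\le n-1$. Restricting $\mathcal{G}$ to an affine subspace of one offset factor $\mathbb{A}^n\subset\mathbb{A}^{nr}$ does \emph{not} reduce the problem to $r$ rational functions in $n-1$ variables: the fiber of $\pi$ over any point of the offset space is still $\mathbb{A}^n$, so the cohomological weight range $[0,2n]$ is unchanged. Slicing the parameter space cuts down $\dim X_j$ and the ambient dimension by the same amount and tells you nothing new about codimension; and there is no mechanism here by which ``one ambient direction'' is traded for ``one unit of weight.'' The Fouvry--Katz stratification you cite works by Fourier-transforming an additive character in a \emph{linear} parameter, which is exactly the structure missing from a multiplicative offset family. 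So the induction you sketch does not get off the ground, and even the linear minorant $j\lfloor(r-1)/2(n-1)\rfloor$ is not established by it.

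The paper's route is entirely different and you should be aware of it, because it is the actual content of the theorem. One first proves an elementary identity (a Burgess-type transformation, Proposition~\ref{ing elem}) expressing the $2s$-th moment $M_k(r,s)$ of $S$ over the full offset space $\mathbb{A}^{nr}$ as a sum over $\mathbb{A}^{2ns}$ of a product $\prod_{i=1}^r T_i$, where each $T_i$ is itself a character sum of the \emph{same shape} as $S$ but with $2s$ offset parameters instead of $r$. One then shows (Theorem~\ref{tr fn}) that on each smooth piece of the stratification, if the maximal weight is $\ge n+j$ then the $2s$-th moment over that piece is $\gtrsim(\#k)^{\dim+(n+j)s}$; this turns upper bounds on $M_k(r,s)$ into lower bounds on $\codim X_j$. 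The point is that these two steps feed into each other: a codimension bound for the $T_i$-stratification (with $2s$ in place of $r$) bounds $M_k(r,s)$, which bounds $\codim X_j$ for $S$, which in turn (being universal in $r$) bounds the $T_i$-stratification for the next round. Iterating this bootstrap, starting only from the one-variable Weil bound, drives the bound on $\codim X_{n-1}$ up to its limiting value $\lfloor(r-1)/2\rfloor$; one final pass through the moment inequality then produces the piecewise-linear $\theta_j$ for all $j$. The $\max\{0,b+j-(n-1)\}$ term is not an artifact of a slicing budget but falls out of a discrete Legendre transform between $c(r,j)$ and $m(r,s)$.
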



The theorem says that square root cancellation holds outside of $X_1$, so $X_1$ is ``the stratum of all exceptional (non-generic) parameter values", 
and $\theta_1=\floor*{\frac{r-1}{2(n-1)}}$ is a lower bound for $\codim X_1$. In particular, we need $r\ge2n-1$ (i.e. an offset family with at least $(2n-1)n$ parameters) to show that square root cancellation holds for generic parameter values (i.e. $\codim X_1>0$).
We shall call a parameter value $(x\up{1},\dots,x\up{r})$ $j$-exceptional if it lies in $X_j(\kappa)$, so that ``exceptional" is the same as ``1-exceptional".

Notice that our assumptions on $F_i$ are very general: they need not actually be polynomials, only rational functions, and no nonsingularity conditions or relations among the $F_i$'s are assumed.
This is due to the generality of the argument: it relies 
on the general formalism of $\ell$-adic sheaves and weights as in Weil II \cite{Weil II} but requires no explicit cohomological computations.
In particular, square root cancellation is not established in the usual way by showing that the middle cohomology is pure of weight $n$ and that the higher cohomology groups vanish.

An explicit value of the constant $C$ has been obtained by Katz \cite[Theorem 11]{sumBetti} and it does not actually depend on $d$, but we do not know a procedure to explicitly determine $C'$ and $\mathcal{S}$.
It is not clear whether one should expect that better $\theta_j$'s can be obtained for general $F_i$'s, but there should certainly be room for improvement if the $F_i$'s are nice. A na\"ive linear interpolation between $\theta_n=\ceil{\frac{nr}{2}}$ and $\theta_0=0$ yields $\theta_j\approx\frac{jr}{2}$, so that $\lim_{r\to\infty}\frac{\theta_j}{r}=\frac{j}{2}$; this may be a natural goal to aim for. In contrast, with our current $\theta_j$'s the limit is $\frac{j}{2(n-1)}$; in the case $n=2$, this suggests that our result is asymptotically optimal for general $F_i$, though for specific $F_i$'s the situation may be better: in fact, if the $F_i$'s are pairwise non-associate irreducible polynomials and some $\chi_i$ is nontrivial, then $\codim X_n=nr+1$, i.e. there is no $n$-exceptional parameter value at all. 
If we are able to obtain a bound on $\codim X_{n-1}$ for the $T_i$'s (see below) that is better than $\theta_{n-1}=s-1$, a better bound on $\codim X_1$ for $S$ will follow.


\subsection*{Outline of the proof}
There are three key ingredients of the proof. The first is an elementary transformation which allows us to express the moments over the family of character sums $S$ in terms of $r$ other families of character sums $T_i$, $1\le i\le r$. It is a special case of Lemma \ref{elem}.

\begin{proposition}\label{ing elem}
For $s\in\N$, let $M_\kappa(r,s)$ denote the $2s$-th moment of the character sum $S(x\up1,\dots,x\up r)$ over the parameter space $\kappa^{nr}$. We have
\begin{equation}\label{elem eqn}
M_\kappa(r,s):=\sum_{x\up{1},\dots,x\up{r}\in\kappa^n}\abs{S(x\up{1},\dots,x\up{r})}^{2s}= \sum_{m\up{1},\dots,m\up{2s}\in\kappa^n} \prod_{i=1}^r T_i(m\up{1},\dots,m\up{2s}) 
\end{equation} 
where
\[ T_i(m\up{1},\dots,m\up{2s}):=\sum_{x\in\kappa^n}
\prod_{j=1}^s \chi_i(F_i(m\up{j}+x)) \prod_{j=s+1}^{2s} \chi_i^{-1}(F_i(m\up{j}+x)) =\sum_{x\in\kappa^n}\chi_i(F_{{\bf m}}(x))\]
where
$ F_{{\bf m}}(x):=\prod_{j=1}^s F_i(m\up{j}+x) \prod_{j=s+1}^{2s} F_i(m\up{j}+x)^{-1}.$
\end{proposition}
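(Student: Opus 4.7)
This is essentially a bookkeeping computation: expand $\abs{S}^{2s}$ into a $2s$-fold sum (with $s$ copies coming from $S$ and $s$ from $\bar S$), swap the order of summation so that the $x\up i$ variables are innermost, and observe that the resulting expression factorizes along $i$.

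\textbf{Step 1.} I would begin with the identity
\[
\abs{S(x\up 1,\dots,x\up r)}^{2s} = \prod_{j=1}^s S(x\up 1,\dots,x\up r) \prod_{j=s+1}^{2s}\overline{S(x\up 1,\dots,x\up r)},
\]
and introduce independent dummy variables $m\up 1,\dots,m\up{2s}\in\kappa^n$ for each of the $2s$ factors. Using $\overline{\chi(\alpha)}=\chi^{-1}(\alpha)$ (valid even under the convention $\chi(0)=0$, since both sides vanish) and expanding each $S$, one obtains
\[
\abs{S(x\up 1,\dots,x\up r)}^{2s} = \sum_{m\up 1,\dots,m\up{2s}\in\kappa^n}\prod_{i=1}^r\left[\prod_{j=1}^s\chi_i(F_i(m\up j+x\up i))\prod_{j=s+1}^{2s}\chi_i^{-1}(F_i(m\up j+x\up i))\right].
\]
The key observation is that the bracketed quantity depends only on $x\up i$ among the $x$-parameters.

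\textbf{Step 2.} Summing over $(x\up 1,\dots,x\up r)\in\kappa^{nr}$ and exchanging the order of summation with the finite sum over $(m\up 1,\dots,m\up{2s})$, the sum over the $r$-tuple $(x\up 1,\dots,x\up r)$ factors as a product of $r$ independent sums, one for each $i$:
\[
M_\kappa(r,s)=\sum_{m\up 1,\dots,m\up{2s}}\prod_{i=1}^r\underbrace{\sum_{x\up i\in\kappa^n}\prod_{j=1}^s\chi_i(F_i(m\up j+x\up i))\prod_{j=s+1}^{2s}\chi_i^{-1}(F_i(m\up j+x\up i))}_{=\,T_i(m\up 1,\dots,m\up{2s})},
\]
where in the last step the inner sum is renamed with dummy variable $x$ and recognized as $T_i$ by definition.

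\textbf{Step 3.} Finally I would verify the alternative formula $T_i=\sum_x\chi_i(F_{\bf m}(x))$. This uses the multiplicativity $\chi_i(\alpha\beta)=\chi_i(\alpha)\chi_i(\beta)$, together with the convention that $\chi_i$ vanishes at $0$ (and on values of $F_i$ where numerator or denominator vanishes): whenever any factor $F_i(m\up j+x)$ is $0$ or undefined, the left side vanishes by the convention applied to $F_{\bf m}$, and the right side vanishes because the corresponding factor is $0$, so the two expressions agree term by term.

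\textbf{Main obstacle.} There is no real obstacle; the statement is a Fubini-type manipulation. The only care required is with the zero/pole conventions for $\chi_i(F_i)$, which must be checked to be compatible with both the factorized form and the compact form $\chi_i(F_{\bf m}(x))$, but this is routine once the conventions are recalled.
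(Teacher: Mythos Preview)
Your proposal is correct and follows essentially the same route as the paper: expand $\abs{S}^{2s}$ as a $2s$-fold sum, swap the order of summation, and factor over $i$ using the distributive law. The only presentational difference is that the paper packages the manipulation as a general lemma (with a ring $R$, sets $M,X$, functions $f_i$, and automorphisms $\sigma_j$ playing the role of complex conjugation) and then specializes, whereas you carry out the computation directly in the case at hand; the underlying argument is identical.
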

Normally, $M_\kappa(r,s)/(\#\kappa)^{nr}$ is what is called the moment, but in this paper we call $M_\kappa(r,s)$ the moment for simplicity (to avoid the phrase ``power sum of absolute values"). 
With this terminology, the moments over a subscheme (such as $X_j$) do not exceed the moment $M_\kappa(r,s)$ over the whole parameter space.

Notice that the $T_i$'s are families of character sums of the same form as $S$ but with $2sn$ parameters, so whatever stratification result we prove for general $S$ (as in Theorem \ref{main}) can also be applied to the $T_i$'s, with $r$ replaced by $2s$.

Recall that the family of character sums $S$ has a naturally associated family $S_k$ for each finite extension $k/\kappa$, given by
\[ S_k({\bf x})=S_k(x\up1,\dots,x\up r):= \sum_{m\in k^n} \prod_{i=1}^r \chi_i(\Norm_{k/\kappa}(F_i(m+x\up i))) \]
for ${\bf x}=(x\up 1,\dots,x\up r)\in k^{nr}$.
Let $M_k(r,s):=\sum_{{\bf x}\in k^{nr}}\abs{S_k({\bf x})}^{2s}$ denote the $2s$-th moment of $S_k$. If we replace $\kappa$ by $k$ and $\chi_i$ by $\chi_i\circ\Norm_{k/\kappa}$ in Proposition \ref{ing elem}, we get
\begin{equation}\label {elem eqn+}
M_k(r,s):=\sum_{m\up{1},\dots,m\up{2s}\in k^n} \prod_{i=1}^r T_{i;k}(m\up{1},\dots,m\up{2s}) 
\end{equation}
where
\begin{align*} T_{i;k}(m\up{1},\dots,m\up{2s})&:=\sum_{x\in k^n}
\prod_{j=1}^s \chi_i\circ\Norm_{k/\kappa}(F_i(m\up{j}+x)) \prod_{j=s+1}^{2s} (\chi_i\circ\Norm_{k/\kappa})^{-1}(F_i(m\up{j}+x))\\ &=\sum_{x\in\kappa^n}\chi_i\circ\Norm_{k/\kappa}(F_{{\bf m}}(x)).
\end{align*}\\

The second ingredient connects the moments $M_k(r,s)$ over finite extensions of $k/\kappa$ to the dimensions of the $X_j$'s.

\begin{proposition}\label{ing tr fn}
Let $C,C',\mathcal{S}$ be as in Theorem \ref{main} and assume that $\deg F_i\le D$, $\ord\chi_i\mid d>0$ and $\Char\kappa\notin\mathcal{S}$.
\begin{enumerate}[label={(\alph*)}]
\item If $Y$ be a smooth subvariety of $\A^{nr}_\kappa$ on which the families of character sums $S_k$ are a virtual lisse trace function (see Remark \ref{rmk tr fn}), then for each integer $j$, either 
\begin{enumerate}[label={(\arabic*)}]
\item $\abs{S_k({\bf x})}\le C(\#k)^{(n+j-1)/2}$ for any finite extension $k/\kappa$ and ${\bf x}\in Y(k)$, or
\item  $\displaystyle   
\limsup_{\#k\to\infty} \frac{M_k(r,s)}{(\#k)^{\dim Y}(\#k)^{(n+j)s}}
\ge \limsup_{\#k\to\infty} \frac{\sum_{{\bf x}\in Y(k)}\abs{S_k({\bf x})}^{2s}}{(\#k)^{\dim Y}(\#k)^{(n+j)s}}
\ge1$ for all $s\in\N$.
\end{enumerate}
\item There exists a decomposition of $\A^{nr}_{\kappa}$ into smooth varieties $Y$ such that the sum of their degrees does not exceed $C'$ and the restrictions of $S_k({\bf x})$ to each $Y$ is a virtual lisse trace function.
\end{enumerate}
\end{proposition}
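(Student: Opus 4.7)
The plan is to realize the family $S_k$ as the Frobenius trace function of a constructible $\ell$-adic complex on $\A^{nr}_\kappa$, and then combine Deligne's weight theory with a Schur-type $L^2$-positivity argument.

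For part (b), I consider the projection $\pi\colon \A^n_\kappa \times \A^{nr}_\kappa \to \A^{nr}_\kappa$ onto the parameter space and the sheaf $\mathcal{L} := \bigotimes_{i=1}^r g_i^*\mathcal{L}_{\chi_i}$ on the total space, where $g_i(m,\mathbf{x}) := F_i(m+x^{(i)})$, $\mathcal{L}_{\chi_i}$ is the Kummer sheaf on $\Gm{\kappa}$ attached to $\chi_i$, and the pullback is extended by zero (via $j_!$) outside the open locus where all $g_i$ are defined and nonzero. Proper base change yields $S_k(\mathbf{x}) = \Tr(\Frob_k \mid (R\pi_!\mathcal{L})_{\bar{\mathbf{x}}})$. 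Constructibility of $R\pi_!\mathcal{L}$ provides a stratification of $\A^{nr}_\kappa$ into locally closed subschemes on which each cohomology sheaf $R^i\pi_!\mathcal{L}$ is lisse; refining this further to make strata smooth, each restriction $S_k|_Y$ becomes a virtual lisse trace function given by $\sum_i(-1)^i R^i\pi_!\mathcal{L}|_Y$. The uniform bound $C'$ on the sum of degrees and the exceptional set $\mathcal{S}$ depend only on $(n,r,d,D)$ via effective constructibility results and dimension/degree bounds for the incidence loci cut out by the $F_i$'s.

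For part (a), let $\mathcal{F}$ be the virtual lisse sheaf realizing $S_k$, and let $w$ be its maximum weight, i.e.\ the largest integer with $[\mathcal{F}]_w \ne 0$ in the weight-graded piece of the Grothendieck group. If $w \le n+j-1$, Deligne's main theorem gives $|S_k(\mathbf{x})| \le \rk(\mathcal{F}) \cdot (\#k)^{w/2} \le C(\#k)^{(n+j-1)/2}$, where $C$ is a uniform bound on $\rk(\mathcal{F})$ from the Betti-number estimates of \cite{sumBetti}; this is (1). Suppose instead $w \ge n+j$. I would first establish (2) for $s=1$ and then bootstrap via H\"older's inequality. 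Write $[\mathcal{F}]_w = [\mathcal{P}_1]-[\mathcal{P}_2]$ in $K_0$ with $\mathcal{P}_1,\mathcal{P}_2$ pure semisimple of weight $w$ and $\mathcal{P}_1 \not\cong \mathcal{P}_2$. Expanding $|S_k|^2$, the cross-terms involving the lower weight part $[\mathcal{F}]_{<w}$ contribute $O((\#k)^{\dim Y + w - 1/2})$ in total (by Deligne), so the main term equals $\Tr(\Frob_k \mid R\Gamma_c(Y_{\bar\kappa}, [\mathcal{F}]_w \otimes \overline{[\mathcal{F}]_w}))$ by Grothendieck-Lefschetz. The top cohomology contributes $(\#k)^{\dim Y+w}$ per geometric component times the virtual dimension of $\pi_1(Y_{\bar\kappa})$-coinvariants, which by Schur's lemma equals $\sum_\alpha(a_{1,\alpha}-a_{2,\alpha})^2 \ge 1$. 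Since $Y$ is $\kappa$-integral, Galois permutes its geometric components transitively, so $\sum_{\mathbf{x}\in Y(k)}|S_k(\mathbf{x})|^2 \ge N(1-o(1))(\#k)^{\dim Y+w}$, where $N$ is the number of geometric components of $Y$. Combined with $\#Y(k) \le N(\#k)^{\dim Y}(1+o(1))$, H\"older's inequality yields
\[ \sum_{\mathbf{x}\in Y(k)}|S_k(\mathbf{x})|^{2s} \;\ge\; \frac{\left(\sum_{\mathbf{x}}|S_k(\mathbf{x})|^2\right)^s}{(\#Y(k))^{s-1}} \;\ge\; N(1-o(1))(\#k)^{\dim Y+sw}, \]
and since $w \ge n+j$, dividing by $(\#k)^{\dim Y+(n+j)s}$ gives $\limsup \ge N \ge 1$, establishing (2).

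The main obstacle is the effective degree bound $C'$ in (b): qualitative constructibility and generic lisseness are standard, but controlling the sum of degrees of strata uniformly in $\kappa$ requires quantitative stratification/flattening results, tracked through the complexity of the incidence and ramification loci attached to the $F_i$'s. In (a), the technical care lies in the $K_0$-theoretic handling of the top-weight piece of a virtual sheaf and in verifying the positivity of the Schur coefficient $\sum(a_{1,\alpha}-a_{2,\alpha})^2$, which is guaranteed precisely because $[\mathcal{F}]_w \ne 0$.
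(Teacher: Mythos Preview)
Your overall architecture matches the paper's: realize $S_k$ as the trace of $R\pi_!\mathcal{L}$, stratify so the cohomology sheaves are lisse and the strata smooth, and for part (a) analyze the top-weight piece via Grothendieck--Lefschetz and a Schur-type computation, reducing $s\ge 1$ to $s=1$ by convexity. The paper does exactly this (Theorem~\ref{tr fn} for (a), \S\ref{constr} with Lemmas~\ref{fam deg} and~\ref{smth dcmpsn} for (b)); in particular your H\"older step is the paper's Jensen step.

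There is, however, a genuine gap in your part (a). You assert that the top cohomology contributes $(\#k)^{\dim Y+w}$ times the \emph{virtual dimension} $\sum_\alpha(a_{1,\alpha}-a_{2,\alpha})^2$ of the $\pi_1(Y_{\bar\kappa})$-coinvariants. But what it actually contributes is $(\#k)^{\dim Y+w}$ times the \emph{trace of Frobenius} on those coinvariants, and Frobenius can act nontrivially there. Concretely, if $\mathcal{P}_1$ is arithmetically irreducible but geometrically splits as $V_1\oplus V_2$ with $V_1,V_2$ swapped by Frobenius (and $\mathcal{P}_2=0$), then the geometric invariants of $\mathcal{P}_1\otimes\mathcal{P}_1^\vee$ are two-dimensional with Frobenius eigenvalues $\{1,-1\}$, so the top-cohomology contribution is $(1+(-1)^{[k:\kappa]})(\#k)^{\dim Y+w}$, which vanishes for odd $[k:\kappa]$---your claimed lower bound $2(1-o(1))(\#k)^{\dim Y+w}$ fails there. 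The paper repairs this by working with \emph{arithmetic} (not geometric) Schur: after cancelling common arithmetic irreducible constituents, one shows the Frobenius eigenvalue $1$ occurs in the multiset $A$ coming from $\mathcal{F}\otimes\mathcal{F}^\vee\oplus\mathcal{G}\otimes\mathcal{G}^\vee$ but not in $B$ coming from the cross terms, so $A\neq B$; then Katz's lemma on power sums of unit-modulus numbers (\cite[2.2.2.2]{Katz-Sommes}) gives $\limsup\ge 1$. Nonnegativity of $\sum_{\mathbf{x}}|S_k(\mathbf{x})|^2$ alone does not bypass this step.

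For (b), you correctly flag the uniformity of $C'$ and $\mathcal{S}$ as the obstacle; the paper resolves it not by ``effective constructibility'' over each $\kappa$ separately, but by building a single universal parameter scheme $\mathcal{P}$ over $\Spec\Z[1/d,\zeta_d]$ carrying all $(F_i,\chi_i)$, stratifying $\A^{nr}_{\mathcal{P}}$ once, and then taking fibers. The degree bound $C'$ is the sum of degrees of that one stratification, and $\mathcal{S}$ arises from the finitely many bad primes where the smooth decomposition (Lemma~\ref{smth dcmpsn}) and integrality of weights fail.
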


Therefore, for $0\le j\le n$ we may take $X_j$ to be the union of those $Y$ on which the alternative (2) holds, which implies that 
\[\dim X_j\le \max\{\dim Y:Y\text{ satisfies (2)}\}\le \inf_{s\in\N} \limsup_{\#k\to\infty}(\log_{\#k}M_k(r,s)-(n+j)s).\] Upper bounds on $M_k(r,s)$ for all finite extensions $k/\kappa$ thus yield upper bounds on $\dim X_j$ (i.e. lower bounds on $\codim X_j$).

Proposition \ref{ing tr fn}(a) follows from Theorem \ref{tr fn}, and (b) is shown in \S\ref{constr} using Lemma \ref{fam deg} and Lemma \ref{smth dcmpsn}.

The above two ingredients together allow the following bootstrapping process: Starting from bounds on the moments (for all $s$ and all $k/\kappa$), Propsosition \ref{ing tr fn} yields a stratification result (a lower bound on $\codim X_j$ for each $j$). If the bounds are proved for general $S$, we may also apply them to the $T_i$'s. A stratification result for the $T_i$'s in turn yield bounds on the moments of $S$ in the following manner, and the process can then be repeated: write $\A^{nr}=\bigcup_{j=0}^n X_j\setminus X_{j+1}$ (with $X_{n+1}=\varnothing$), apply the respective bounds on $T_{i,k}({\bf x})$ (in place of $S_k({\bf x})$) for ${\bf x}\in X_j(k)\setminus X_{j+1}(k)\subset\A^{nr}(k)\setminus X_{j+1}(k)$, and notice that $\#X_j(k)\le C'(\#k)^{\dim X_j}$ (see Lemma \ref{ing box}). This way we obtain new bounds on the right-hand side of (\ref{elem eqn+}) and hence on the left-hand side $M_k(r,s)$. For details about this process, see \S\ref{sec boot}. 
Starting from the initial input below, each time we run the process, the bounds on the $\codim X_j$'s will be improved, and they tend to certain limits which we call $\theta_j$, and these are the best codimension bounds obtainable by iterated improvement (see \S\ref{sec improve}). \\

The initial input to the iterative bootstrapping process is supplied by the following proposition, the last ingredient of the proof:
\begin{proposition}\label{ing Weil bd}
In the setting of Theorem \ref{main}:
\begin{enumerate}[label={(\alph*)}]
\item The number of parameter values ${\bf m}=(m\up1,\dots,m\up{2s})\in k^{n\cdot2s}$ such that \[F_{{\bf m}}(x):=\prod_{j=1}^s F_i(m\up j+x) \prod_{j=s+1}^{2s} F_i(m\up j+x)^{-1}\] is a perfect $d_i$th power in $\ov{k}(x)=\ov{k}(x_1,\dots,x_n)$, is $O((\#k)^{ns})$ as $k$ varies over finite extensions of $\kappa$.
\item (multivariate Weil bound) If $F_{{\bf m}}\in k(x_1,\dots,x_n)$ is not a perfect $d_i$th power in $\ov{k}(x_1,\dots,x_n)$, then
\[T_{i,k}(m\up1,\dots,m\up{2s}):=\sum_{x\in k^n}\chi_i\circ\Norm_{k/\kappa}(F_{{\bf m}}(x))=O((\#k)^{n-1/2})\] as $k$ varies over finite extensions of $\kappa$.
\end{enumerate}
\end{proposition}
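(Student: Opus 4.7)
My plan handles the two parts by different means. For (a), the approach is combinatorial-geometric, analyzing when translates of irreducible factors of $F_i$ can arrange to produce a $d_i$-th power. I would factor $F_i = \prod_p G_p^{e_p}$ into irreducibles in $\ov{k}[x_1,\ldots,x_n]$, with $0 < \abs{e_p} < d_i$ by the $d_i$-th-power-free hypothesis. The irreducible factors of $F_{\bf m}$ are then among the translates $G_p(x + m\up{j})$; two such translates $G_p(x+m\up{j})$ and $G_{p'}(x+m\up{j'})$ are proportional iff $G_p, G_{p'}$ belong to the same translation orbit $\mathcal{O}_l$ and $m\up{j} - m\up{j'}$ differs from the canonical translation vector between them by an element of the common stabilizer $T_l \subset \bb{G}_a^n$. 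Since each $\abs{e_p}$ is nonzero and strictly less than $d_i$, for $F_{\bf m}$ to be a $d_i$-th power, within each orbit $\mathcal{O}_l$ the pairs $(j,p)$ (with $p \in \mathcal{O}_l$) must group into equivalence classes of size $\ge 2$. This specifies a ``combinatorial type'' with only finitely many possibilities (bounded in terms of $n, D, d, s$), and each type cuts out a linear subvariety of $\A^{2sn}_k$ defined by congruences $m\up{j} - m\up{j'} \equiv c \pmod{T_l}$.

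The desired dimension bound $\le ns$ for each type uses the hypothesis that $T_{F_i}$ is finite. First I would verify that $T_{F_i}$ finite implies $\bigcap_l T_l$ finite: any $t$ in the identity component of $\bigcap_l T_l$ satisfies $G_p(x+t) = c_p(t) G_p(x)$ with $c_p \colon (\bigcap_l T_l)^\circ \to \bb{G}_{\mathrm{m}}$ a morphism of algebraic groups, and any morphism from a connected unipotent subgroup of $\bb{G}_a^n$ to $\bb{G}_{\mathrm{m}}$ is trivial, so $c_p = 1$ and $t \in T_{F_i}$. Finiteness of $\bigcap_l T_l$ then yields $\sum_l \codim T_l \ge n$ by the subadditivity of codimension. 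Within orbit $\mathcal{O}_l$, the classes of size $\ge 2$ contribute at least $s(n - \dim T_l)$ linearly independent constraints on the $m\up{j}$'s (after descending to the quotient $\A^n/T_l$), and summing across orbits yields at least $sn$ independent constraints, giving dimension $\le sn$. Multiplying by $O((\#k)^{ns})$ per stratum and summing over the finitely many combinatorial types completes (a).

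For (b), observe that $\chi := \chi_i \circ \Norm_{k/\kappa}$ has order exactly $d_i$ since $\Norm_{k/\kappa}$ is surjective onto $\kappa^\times$. When $F_{\bf m}$ is not a $d_i$-th power in $\ov{k}(x_1,\ldots,x_n)$, the multivariate Weil bound for multiplicative character sums gives $\abs*{\sum_{x \in k^n} \chi(F_{\bf m}(x))} = O((\#k)^{n - 1/2})$ with implicit constant depending only on $n$ and $\deg F_{\bf m} \le 2sD$; this is a form of Deligne's Riemann hypothesis, and an explicit version appears in \cite{sumBetti}. The main obstacle I foresee is in (a): a single orbit with a large stabilizer $T_l$ contributes constraints of codimension only $s(n - \dim T_l)$, which may be much less than $ns$, so both the cumulative codimension bound $\sum_l \codim T_l \ge n$ and the linear independence of the combined constraints across orbits must be leveraged carefully, and the crucial structural input enabling this is precisely the finiteness of $T_{F_i}$.
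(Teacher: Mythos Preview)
Your treatment of (b) is correct and matches the paper's: both invoke the multivariate Weil bound for a character of order $d_i$ applied to a rational function that is not a $d_i$-th power, with the constant controlled by $n$ and $\deg F_{\mathbf m}\le 2sD$.

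For (a), your overall strategy is the same as the paper's --- factor $F_i$ over $\overline\kappa$, observe that in a $d_i$-th power no translate $G_p(x+m\up j)$ can occur alone, and stratify by the finitely many ``matching patterns'' --- but the step you yourself flag as the main obstacle is a genuine gap. From each orbit $\mathcal O_l$ you correctly extract at least $s\cdot\codim T_l$ linearly independent constraints \emph{in the quotient $\A^n/T_l$}. However, when you lift these back to $(\A^n)^{2s}$ and combine across $l$, the bare inequality $\sum_l\codim T_l\ge n$ does \emph{not} give you $sn$ independent constraints: for subspaces, $\codim(\bigcap_l Z_l)\le\sum_l\codim Z_l$, which is the wrong direction. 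Constraints from orbit $l$ live in $(T_l^\perp)^{2s}$ and those from orbit $l'$ in $(T_{l'}^\perp)^{2s}$; these can overlap substantially even when $\bigcap_l T_l$ is finite, so ``summing'' the codimensions is not justified.

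The paper closes exactly this gap with a linear-algebra lemma you are missing (its Lemma~3.6): given $\F_p$-subspaces $V_1,\dots,V_N$ of $V=k^n$, one can choose a basis $E$ of $V$ and \emph{pairwise disjoint} subsets $E_j\subset E$ with $V_j\subset\Span(E\setminus E_j)$ and $\bigcap_j V_j=\Span(E\setminus\bigcup_j E_j)$. Enlarging each stabilizer to the coordinate subspace $\Span(E\setminus E_j)$ only weakens the constraints; but now the constraints from factor $j$ involve only the $E_j$-coordinates, and disjointness of the $E_j$'s makes the constraints across factors genuinely independent, yielding the bound $\sum_j|E_j|\cdot s=(\dim V-\dim\bigcap V_j)\cdot s\ge ns$ directly. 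Note also that the paper works throughout with $\F_p$-subspaces of $k^n$ rather than algebraic subgroups of $\bb G_a^n$; this matters because the enlargement lemma (Lemma~3.7) is a vector-space fact that the paper explicitly remarks fails for general abelian groups. Your algebraic-group formulation can be made to work, but only after passing to $\F_p$-points and invoking this transversality lemma, which your proposal does not supply.
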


Proposition \ref{ing Weil bd} can be seen to be equivalent to the equality $\codim X_n=ns$ for the sums $T_i$. It was the insight of Michael Larsen that, via the elementary transformation, this rather weak input, the weakest nontrivial bound $O((\#k)^{n-1/2})$ (maximum weight $2n-1$), with square root many exceptions ($\codim X_n=ns$), can be bootstrapped to yield the strongest, square root cancellation bound $O((\#k)^{n/2})$ (maximum weight $n$) for generic parameter values ($\codim X_1>0$). 
This would not work if the exponent in Proposition \ref{ing tr fn}(a)(2) were $(n+j-1)s$ instead of $(n+j)s$, so the integrality of the weights 
is crucial, since it is exactly the integrality that allows the contrast between $(n+j-1)s$ in (1) and $(n+j)s$ in (2) of \ref{ing tr fn}(a).

Proposition \ref{ing Weil bd}(a) follows from Corollary \ref{power count}, and (b) is proved in Remark \ref{rmk Weil bd}. \\

\subsection*{Number of exceptional values in a box} 
Although we are unable to determine explicitly the subschemes of $\A^{nr}_\kappa$ of exceptional parameter values, we obtain uniform bounds on the sums of the degrees of their irreducible components, and hence are able to bound the number of exceptional values in any box in $\kappa^{nr}$, thanks to the following lemma. This is crucial for our intended application in analytic number theory, which will appear in joint work with Lillian Pierce.

\begin{lemma}\label{ing box}
Let $X\subset\A^N_\kappa$ be a subscheme of codimension $\theta$ and let $d$ be the sum of the degrees of its irreducible components. If $\{B_i\}_{i=1}^N$ are subsets of $\kappa$, the ``box" $B:=\prod_{i=1}^N B_i$ is naturally a subset of $\A^N(\kappa)$. If $1\le\#B_1\le\#B_2\le\dots\le\#B_n<\infty$, we have
\[\#\parens*{X(\kappa)\cap\prod_{i=1}^N B_i}\le d\prod_{i=\theta+1}^N \#B_i=d(\#B)\prod_{i=1}^\theta (\#B_i)^{-1}.\]
\end{lemma}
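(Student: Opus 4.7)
The plan is to prove the bound by induction on $N$. First I reduce to the case where $X$ is an irreducible subvariety of codimension exactly $\theta$ and degree $d$: if $X = \bigcup_\alpha X_\alpha$ with irreducible components $X_\alpha$ of codimensions $\theta_\alpha \ge \theta$ and degrees $d_\alpha$ (summing to $d$), then $\prod_{i=\theta_\alpha+1}^N \#B_i \le \prod_{i=\theta+1}^N \#B_i$ (since each $\#B_i \ge 1$), so the claim follows by adding the per-component bounds. The base case $\theta = 0$ (where $X^{\mathrm{red}} = \A^N_\kappa$ and $d = 1$) is immediate.

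For the inductive step with $\theta \ge 1$, I consider the projection $\pi_1 \colon X \to \A^1_\kappa$ onto the first (smallest-box) coordinate and split into two cases. \emph{Case 1:} $\pi_1|_X$ is not dominant. Then by irreducibility $X$ lies in a single hyperplane $\{x_1 = c\}$; we may assume $c \in B_1 \subset \kappa$, since otherwise $X(\kappa) \cap B = \varnothing$ and the bound is trivial. Regarding $X$ as an irreducible subvariety of $\{x_1 = c\} \cong \A^{N-1}_\kappa$ of codimension $\theta - 1$ and the same degree $d$ (linear embeddings preserve degree), the induction hypothesis applied against the already-sorted box $B_2 \times \cdots \times B_N$ gives exactly $d \prod_{i=\theta+1}^N \#B_i$. \emph{Case 2:} $\pi_1|_X$ is dominant. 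For each $t \in B_1$, the fiber $X_t := X \cap \{x_1 = t\}$ is a proper intersection (since $X$ is irreducible and not contained in any such hyperplane), so has dimension at most $N - \theta - 1$, i.e.\ codimension at least $\theta$ in $\A^{N-1}_\kappa$, and total degree at most $d$ by B\'ezout. The induction hypothesis applied fiberwise then yields $\#(X_t(\kappa) \cap (B_2 \times \cdots \times B_N)) \le d \prod_{i=\theta+2}^N \#B_i$; summing over $t \in B_1$ and using $\#B_1 \le \#B_{\theta+1}$ (which is valid precisely because $\theta \ge 1$) gives the desired bound.

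The main obstacle is to verify the fiberwise degree estimate $\deg(X_t) \le d$ uniformly in $t$, including at special values: here irreducibility of $X$ combined with dominance of $\pi_1$ rules out fibers of too large dimension, so the intersection is always proper and B\'ezout applies. A secondary but essential observation is that choosing to project along the \emph{smallest}-box coordinate is exactly what allows the extra factor $\#B_1$ produced by summing over the fiber to be absorbed into $\prod_{i=\theta+1}^N \#B_i$ through the sorted ordering~--- projecting in any other direction would lose the sharp bound.
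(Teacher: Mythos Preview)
Your argument is correct and follows essentially the same Schwartz--Zippel-type induction as the paper's proof (Lemma~\ref{box} combined with Remark~\ref{rmk box}): reduce to irreducible components, slice by a coordinate hyperplane, bound the total degree of the slice via B\'ezout, and recurse on the ambient dimension. The only substantive difference is the choice of slicing coordinate --- you project onto the \emph{smallest}-box coordinate while the paper (working in $\bbP^n$ over the algebraic closure) slices along the \emph{largest}; both choices yield the sharp bound (in your Case~2 one absorbs a harmless factor $\#B_1/\#B_{\theta+1}\le 1$, whereas in the paper's analogue of your Case~1 one absorbs $\#B_\theta/\#B_N\le 1$), so your closing remark that ``projecting in any other direction would lose the sharp bound'' is not accurate.
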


For the proof, see Remark \ref{rmk box}. The following is an easy corollary of Theorem \ref{main} and Lemma \ref{ing box} with $N=nr$.

\begin{corollary}\label{main cor}
In the setting of Theorem \ref{main}, if $\{B_i\}_{i=1}^n$ are subsets of $\kappa$ such that $1\le\#B_1\le\#B_2\le\dots\le\#B_n<\infty$, and let $B:=\prod_{i=1}^n B_i\subset\kappa^n$, then
\[\#\{(x\up{1},\dots,x\up{r})\in B^r:\abs{S(x\up{1},\dots,x\up{r})}>C(\#\kappa)^{(n+j-1)/2}\}
\quad\le\quad C'(\#B)^r {\bf b}^{-\theta_j},\]
where ${\bf b}^{-\theta}$ denotes $(\#B_1\#B_2\dots\#B_{n_0})^{-r}(\#B_{n_0+1})^{-\eta}$ if we write $\theta=n_0 r+\eta$ with $n_0\in\N$ and $0\le \eta<r$, so that ${\bf b}^{-\theta_j}=(\#B_1)^{-\theta_j}$ for $0\le j\le n-1$, and 
\[{\bf b}^{-\theta_n}=\case{
(\#B_1\#B_2\dots\#B_{n/2})^{-r}&\text{if }n\text{ is even,}\\
(\#B_1\#B_2\dots\#B_{(n-1)/2})^{-r}(\#B_{(n+1)/2})^{-\ceil{r/2}}&\text{if }n\text{ is odd.}
}\]
\end{corollary}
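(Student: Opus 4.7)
The plan is to combine Theorem \ref{main} with Lemma \ref{ing box} applied to the product box $B^r \subset \A^{nr}(\kappa)$, which is the structure already hinted at in the statement. First, by Theorem \ref{main}, any parameter value $(x\up 1,\dots,x\up r)$ with $|S(x\up 1,\dots,x\up r)|>C(\#\kappa)^{(n+j-1)/2}$ must lie in $X_j(\kappa)$, where $X_j$ is a subscheme of $\A^{nr}_\kappa$ of codimension at least $\theta_j$ whose irreducible components have total degree at most $C'$. So the quantity to be bounded is at most $\#(X_j(\kappa)\cap B^r)$, and we may apply Lemma \ref{ing box} with $N=nr$, $X=X_j$, $\theta=\theta_j$, and $d\le C'$.

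To apply the lemma, I would rewrite $B^r$ in the correct sorted form. Since $B^r=\prod_{i=1}^n B_i\times\cdots\times\prod_{i=1}^n B_i$ ($r$ factors) and $\#B_1\le\cdots\le\#B_n$, sorting the $nr$ factor sets by size (non-decreasing) gives $r$ copies of $B_1$ followed by $r$ copies of $B_2$, and so on up to $r$ copies of $B_n$. Lemma \ref{ing box} then yields
\[\#(X_j(\kappa)\cap B^r)\le C'\cdot(\#B)^r\cdot\prod_{i=1}^{\theta_j}(\#B_{\sigma(i)})^{-1},\]
where $\sigma(i)$ denotes the $i$-th smallest factor in the above sorted order.

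The remaining task is just bookkeeping: writing $\theta_j=n_0 r+\eta$ with $0\le\eta<r$, the $\theta_j$ smallest factors in the sorted order consist of all $r$ copies of each of $B_1,\dots,B_{n_0}$ together with $\eta$ copies of $B_{n_0+1}$, so $\prod_{i=1}^{\theta_j}(\#B_{\sigma(i)})^{-1}=(\#B_1\cdots\#B_{n_0})^{-r}(\#B_{n_0+1})^{-\eta}={\bf b}^{-\theta_j}$, matching the definition in the corollary. Finally, to verify the displayed special cases, I would observe that for $0\le j\le n-1$ we have $\theta_j\le\theta_{n-1}=\floor{(r-1)/2}<r$, so $n_0=0$ and $\eta=\theta_j$, giving ${\bf b}^{-\theta_j}=(\#B_1)^{-\theta_j}$; and for $j=n$, $\theta_n=\ceil{nr/2}$ decomposes as $(n/2)r+0$ when $n$ is even and as $((n-1)/2)r+\ceil{r/2}$ when $n$ is odd (this last identity needing a quick check in both parities of $r$), reproducing the two formulas stated. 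There is no real obstacle here: everything is formal once the sorted ordering of the factors of $B^r$ is correctly identified and Lemma \ref{ing box} is invoked.
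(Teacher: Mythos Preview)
Your proposal is correct and matches the paper's approach exactly: the paper simply states that the corollary is ``an easy corollary of Theorem~\ref{main} and Lemma~\ref{ing box} with $N=nr$'' and gives no further details, and your argument---sorting the $nr$ sides of $B^r$, applying Lemma~\ref{ing box} to $X_j$, and unwinding the bookkeeping for $\theta_j=n_0r+\eta$---is precisely what is intended.
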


Now suppose instead that $F_i$ is a $d_i$th-power-free polynomial in $\Z[x_1,\dots,x_n]$ such that $F_i(x+m)\not\equiv F_i(x)$ for all $m\in\Z^n$, or equivalently (Lemma \ref{equiv reduction}), $F_i$ cannot be made independent of $x_1$ by a linear change of coordinates, for each $1\le i\le r$. By Lemma \ref{reduction}, the reductions of $F_i$ modulo almost all (all but finitely many) primes remain $d_i$th-power-free in $\F_p[x_1,\dots,x_n]$ and satisfy $T_{F_i}=\{0\}$.
Therefore, if $\chi_i:\F_p^\times\to\C^\times$ is a multiplicative character of order dividing $d_i$ for each $1\le i\le r$, $\{B_i\}_{i=1}^n$ are subsets of $\F_p$ such that $1\le\#B_1\le\#B_2\le\dots\le\#B_n<\infty$, and $B:=\prod_{i=1}^n B_i$, then by the above corollary,
\[\#\{(x\up{1},\dots,x\up{r})\in B^r:\abs{S(x\up{1},\dots,x\up{r})}>C(\#\kappa)^{(n+j-1)/2}\}
\quad\le\quad C'(\#B)^r {\bf b}^{-\theta_j}\]
for almost all primes $p$ (the finitely many primes in $\mathcal{S}$ also needs to be excluded). A similar result holds when $F_i=G_i/H_i\in\Q(x_1,\dots,x_n)$ is $d_i$th-power-free with $\gcd(G_i,H_i)=1$ and $G_i,H_i\in\Z[x_1,\dots,x_n]$ are not invariant under any translations.

\section{Proof of the Main Theorem}

This section presents a complete proof of Theorem \ref{main} following the outline given in \S1. It relies on some additional lemmas stated and proved in \S3.

\subsection{Construction of the stratification (the $X_j$'s)}\label{constr}

Fix $D, d, n, r\in\N$ where $D$ will be an upper bound for all $\deg F_i$'s and $d>0$ will be a common multiple of all $d_i=\ord \chi_i$. 
Let $\mathcal{P}_0$ be the arithmetic scheme that parametrizes all finite fields $\kappa$ and pairs of polynomials $(G_1,H_1),\dots,(G_r,H_r)$ of degrees $\le D$ with $H_i\not\equiv0$, which is an open subvariety of the affine space over $\Z$ of relative dimension $2r{D+n\choose n}$.
Let $\zeta_d$ be a primitive $d$th root of unity and let $R:=R_d=\Z[1/d,\zeta_d]$.
Then $\Gm R\to\Gm R$ defined by $x\mapsto x^d$ is a cyclic \'etale covering of degree $d$, hence induces a continuous surjective homomorphism $\pi_1(\Gm R)\to\Z/d\Z$. If we let $\ell$ be a prime dividing $d$ and compose this with a homomorphism $\Z/d\Z\to\ov{\Q_\ell}^\times$ sending $1$ to $\zeta_d\in\ov{\Q_\ell}$,
we get a 1-dimensional continuous $\ov{\Q_\ell}$-representation of $\pi_1(\Gm R)$, and hence a pure lisse $\ov{\Q_\ell}$-sheaf of weight 0 and rank 1 on $\Gm R$, denoted $\mathcal{L}_d$. For every $\frak{p}\in\Spec R$, the trace function of $\mathcal{L}_d|_{\Gm {{\rm k}(\frak{p})}}$ is a multiplicative character $\chi_d$ of degree $d$ of the residue field ${\rm k}(\frak{p})$.

If $\kappa$ is a finite field that admits multiplicative characters $\chi_1,\dots,\chi_r$ of orders $d_1,\dots,d_r$ respectively, and $d_i\mid d$ for all $i$, then $\kappa$ is a finite extension of ${\rm k}(\frak{p})$ for any $\frak{p}\in\Spec R$ lying above $(\text{char }\kappa)\in\Spec\Z$,
 and $\chi_d\circ{\rm N}_{\kappa/{\rm k}(\frak{p})}$ is a multiplicative character of order $d$ of $\kappa$, so $\chi_1,\dots,\chi_r$ are all powers of $\chi_d\circ{\rm N}_{\kappa/{\rm k}(\frak{p})}$.
Let $\mathcal{P}$ be the disjoint union of $\mathcal{P}_{d,e_1,\dots,e_r}$ over all $0\le e_i<d$, where $\mathcal{P}_{d,e_1,\dots,e_r}$ is a copy of $\mathcal{P}_0\times_{\Spec\Z}\Spec R_d$ for each $e_1,\dots,e_r$. For each $1\le i\le r$, consider the ``translate and evaluate" maps $g_i$ and $h_i$ 
which are morphisms $\A^{n+nr}_\mathcal{P}=\A^{n+nr}_\Z\times_{\Spec\Z}\mathcal{P}\to \A^1_\Z$ defined by 
\[((m,x\up1,\dots,x\up r),(G_1,H_1,\dots,G_r,H_r))\mapsto G_i(m+x\up i)\text{ and }H_i(m+x\up i)\]
respectively on $\A_{\mathcal{P}_{d,e_1,\dots,e_r}}^{n+nr}\subset\A_{\mathcal{P}}^{n+nr}$.
Consider $\Gm \Z\subset\A^1_\Z$ and its inverse images under the evaluation maps, and define $U:=\bigcap_{i=1}^r g^{-1}_i(\Gm\Z)\cap h^{-1}_i(\Gm\Z)$, an open dense subscheme of $\A^{n+nr}_\mathcal{P}$.
On the connected component $U_{d,e_1,\dots,e_r}:=U\cap\A^{n+nr}_{\mathcal{P}_{d,e_1,\dots,e_r}}$ of $U$, the maps $g_i$ and $h_i$ factor through $\Gm{R_d}$, and we define a sheaf $\mathcal{L}$ on $U$ by specifying
\[ \mathcal{L}|_{U_{d,e_1,\dots,e_r}}:= \bigotimes_{i=1}^r g^*_i\mathcal{L}_d^{\otimes e_i}\otimes h^*_i\mathcal{L}_d^{\otimes -e_i}.\]
Then for any finite field $\kappa$ and multiplicative characters $\chi_i:\kappa^\times\to\C^\times$ with $\ord \chi_i\mid d$ and rational functions $F_i\in\kappa(x_1,\dots,x_n)$ of degrees $\le D$, if we write $\chi_i=\chi_d^{e_i}$, then there exists a closed point $P=(F_1,\dots,F_r)\in \mathcal{P}_{d,e_1,\dots,e_r}\subset\mathcal{P}$ such that the trace function of $\mathcal{L}$ on the fiber $U\cap\A^{n+nr}_P$ at a point $(m,x\up1,\dots,x\up r)$ equals $\prod_{i=1}^r \chi_i(F_i(m+x\up i))$.
If we now consider the projection $\pi:U\to \A^{nr}_\mathcal{P}$, then the trace function of the complex $\mathcal{K}:=R\pi_!\mathcal{L}$ on $\A^{nr}_P$ gives rise to the family of character sums that we are interested in:
\[ \Tr( \Frob_k \mid \mathcal{K}_{{\bf x}} ) = S_k(x\up1,\dots,x\up r) = \sum_{m\in k^n} \prod_{i=1}^r \chi_i({\rm N}_{k/\kappa}(F_i(m+x\up i))) \]
for any finite extension $k/\kappa$ and ${\bf x}=(x\up1,\dots,x\up r)\in\A^{nr}_P(k)\cong k^{nr}$.

The trace function of $\mathcal{K}$ is the same as that of the alternating sum of its cohomology sheaves $R^j\pi_!\mathcal{L}$, which are constructible mixed sheaves of integer weights (possibly away from finitely many primes), since $\mathcal{L}$ is mixed of integer weights and constructible (in fact pure of weight 0 and lisse); see \cite[Theorem I.9.3]{KW}, \cite[Lemme 6.1.3]{Weil II}, and \cite[Th. finitude, Corollarie 1.5]{SGA4.5}.
Mixed sheaves are iterated extensions of pure sheaves, and the trace function of the mixed sheaf is simply the sum of the trace functions of its pure factors. There exists a decomposition of $\A^{nr}_\mathcal{P}$ into finitely many (locally closed) subschemes: $\A^{nr}_\mathcal{P}=\bigcup_{X\in\mathcal{X}}X$, such that the restrictions of these constructible pure factors to each $X\in\mathcal{X}$ are lisse, 
so that $S_k({\bf x})$ is a virtual lisse trace function on each $X$ (see Remark \ref{rmk tr fn}).
Moreover, using Lemma \ref{smth dcmpsn}, we may assume that $\pi_\mathcal{P}|_X:X\to\ov{\pi_\mathcal{P}(X)}$, where $\pi_\mathcal{P}:\A^{nr}_\mathcal{P}\to\mathcal{P}$ is the structural morphism, is smooth for each $X$ if we work away from finitely many primes, so that every fiber of $\pi_0|_X$ is smooth over the residue field (a finite field). We may also assume that each $X\in\mathcal{X}$ is connected. By Lemma \ref{fam deg} applied to $\ov{X}$, the closure of $X$ in $\bbP^{nr}_{\ov{\pi_\mathcal{P}(X)}}$, the geometric fibers of $\pi_\mathcal{P}|_X$ are equidimensional of degree no more than $\deg X:=\deg \ov{X}$. We then define $C':=\sum_{X\in\mathcal{X}} \deg X$.

Once we obtain the uniform stratification, we now work one fiber at a time, i.e. we restrict to a closed point $P\in\mathcal{P}$ parametrizing a particular choice of $F_1,\dots,F_r,\chi_1,\dots,\chi_r$ such that $F_i\in\kappa(x_1,\dots,x_n)$ is $d_i$th-power-free, where $\kappa:={\rm k}(P)$. For every $X\in\mathcal{X}$, every connected component $Y$ of the fiber $X_P$ of $X$ over $P$ is a smooth variety over $\kappa$, and $S_k({\bf x})$, the trace function of $\mathcal{K}$ on $Y$, is a virtual lisse trace function (see Theorem \ref{tr fn}).
Let $X_j$ be the union of all $Y$ on which $S_k({\bf x})$ satisfies the alternative (2) in Proposition \ref{ing tr fn}(a) (i.e. has maximum weight $\ge n+j$). Then on the other $Y$, the $S_k({\bf x})$ satisfies the alternative (1) (i.e. has maximum weight $\le n+j-1$), and the union of these $Y$ contains $\A^{nr}_P\setminus X_j$, so \[\abs{S(x\up1,\dots,x\up r)}\le C(\#k)^{(n+j-1)/2}\text{\qquad for all }(x\up1,\dots,x\up r)\in\A^{nr}(\kappa)\setminus X_j(\kappa),\] where $C$ is the sum of the ranks of the lisse sheaves (which is bounded by the sum of the maximal ranks of the cohomology sheaves $R^j\pi_!\mathcal{L}$, which is bounded by Katz's constant).
It is clear the sum of the degrees of the irreducible components of $X_j$ does not exceed $C'$. We have thus proved Proposition \ref{ing tr fn}(b).

\subsection{The bootstrapping process}\label{sec boot}

The setting of the bootstrapping process is as follows. We have a family $S$ of character sums, and for each $s\in\N$ and each $1\le i\le r$ we have the family $T_i$ of character sums obtained from the elementary transformation (\ref{elem eqn}). For the family $S$, we consider the filtration $\A^{nr}=X_0\supset X_1\supset X_2\supset \dots\supset X_n$, where $X_j$, $1\le j\le n$, is the union of smooth varieties on which the maximum weight of $S$ is at least $n+j$. The stratification associated to the filtration consists of the $X_j\setminus X_{j+1}$ (on which $S_k({\bf x})$ has maximum weight exactly $n+j$). Similarly, let $\A^{n\cdot2s}\supset Y_1\supset Y_2\supset\dots\supset Y_n$ be the combined stratification of the $T_i$'s, so that $Y_j$, $1\le j\le n$, is the union of smooth varieties on which the maximum weight of some $T_i$ is at least $n+j$.
Define
\begin{align*}
    c(r,j)&:=\codim X_j=nr-\dim X_j,\\
    \text{and\qquad} c'(2s,j)&:=\codim Y_j=n\cdot2s-\dim Y_j.
\end{align*}

Denote the $2s$-th moment of $S_k$ by $M_k(r,s)$, and define \[m(r,s):=\limsup_{\#k\to\infty}\,(\log_{\#k} M_k(r,s)-nr-ns).\] 

The bootstrapping process relies on following three inequalities: 
\begin{lemma}\label{boot}
\begin{enumerate}[label={(\arabic*)}]
    \item $ c(r,j) \ge \max_{s\in\N}\, (js-\floor{m(r,s)})$ ;
    \item $ m(r,s) \le \max_{0\le j\le n}\, ( js-c(r,j) )$ ;
    \item $ m(r,s) \le ns-nr/2+\max_{0\le j\le n}\, (jr/2-c'(2s,j)) $.
\end{enumerate}
\end{lemma}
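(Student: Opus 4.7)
The plan is to deduce the three inequalities by combining the ingredients already in place: Proposition \ref{ing tr fn}(a) applied to $S$, Theorem \ref{main} applied to both $S$ and to each of the $T_i$'s, the counting bound of Lemma \ref{ing box}, and the elementary identity (\ref{elem eqn+}). The bulk of each derivation is bookkeeping with $\log_{\#k}$; the only delicate step is an integrality refinement in the proof of (1).

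For (1), I fix $j$ and pick a smooth subvariety $Y\subseteq X_j$ with $\dim Y=\dim X_j=nr-c(r,j)$. By the very definition of $X_j$ in \S\ref{constr}, alternative (1) of Proposition \ref{ing tr fn}(a) must fail on $Y$, so alternative (2) applies: for every $s\in\N$,
\[\limsup_{\#k\to\infty}\frac{M_k(r,s)}{(\#k)^{\dim Y+(n+j)s}}\ge 1.\]
Taking $\log_{\#k}$ and the limsup produces $m(r,s)\ge js-c(r,j)$. Since both $js$ and $c(r,j)$ are integers, this real inequality upgrades to $c(r,j)\ge js-\lfloor m(r,s)\rfloor$; taking the maximum over $s$ yields (1).

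For (2), I stratify $\A^{nr}_\kappa=\bigsqcup_{j=0}^n(X_j\setminus X_{j+1})$ (with $X_{n+1}:=\varnothing$). On each stratum $X_j\setminus X_{j+1}\subseteq\A^{nr}\setminus X_{j+1}$, Theorem \ref{main} (applied with $j+1$ for $0\le j\le n-1$, and the trivial bound $|S_k|\le(\#k)^n$ for $j=n$) gives $|S_k({\bf x})|\le C(\#k)^{(n+j)/2}$, while Lemma \ref{ing box} with every $B_i=k$ gives $\#X_j(k)\le C'(\#k)^{nr-c(r,j)}$. Summing $|S_k|^{2s}$ stratum by stratum produces
\[M_k(r,s)\le(n+1)C^{2s}C'(\#k)^{\max_{0\le j\le n}((n+j)s+nr-c(r,j))}.\]
Taking $\log_{\#k}$, passing to the limsup (so that $\log_{\#k}((n+1)C^{2s}C')\to 0$), and subtracting $nr+ns$ gives (2).

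For (3), I start from (\ref{elem eqn+}) and the triangle inequality $M_k(r,s)\le\sum_{\bf m}\prod_{i=1}^r|T_{i,k}({\bf m})|$, then stratify $\A^{n\cdot 2s}_\kappa=\bigsqcup_{j=0}^n(Y_j\setminus Y_{j+1})$ via the $Y_j$'s of the $T_i$'s. On each stratum, applying Theorem \ref{main} separately to each of the $r$ families $T_i$ (with parameter $2s$ in place of $r$) yields $|T_{i,k}({\bf m})|\le C(\#k)^{(n+j)/2}$, so $\prod_i|T_{i,k}({\bf m})|\le C^r(\#k)^{r(n+j)/2}$; combining with $\#Y_j(k)\le C'(\#k)^{2ns-c'(2s,j)}$ yields
\[M_k(r,s)\le(n+1)C^rC'(\#k)^{\max_{0\le j\le n}(r(n+j)/2+2ns-c'(2s,j))}.\]
The same passage through $\log_{\#k}$ and limsup, followed by subtraction of $nr+ns$, produces (3). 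The only real subtlety—and, in the larger argument, the main obstacle—is the integrality refinement in (1): it is the seemingly trivial replacement of $m(r,s)$ by $\lfloor m(r,s)\rfloor$ that supplies the discrete gain driving the bootstrapping in \S\ref{sec boot}.
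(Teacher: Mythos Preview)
Your proof is correct and follows essentially the same route as the paper: Proposition \ref{ing tr fn}(a)(2) for part (1), and stratification by the $X_j$'s (resp.\ $Y_j$'s) combined with the pointwise bounds and Lemma \ref{ing box} for parts (2) and (3). One small logical quibble: in (2) and (3) you cite Theorem \ref{main} for the bound $|S_k({\bf x})|\le C(\#k)^{(n+j)/2}$ on $X_j\setminus X_{j+1}$, but Lemma \ref{boot} is itself an ingredient in the proof of Theorem \ref{main}; what you actually need (and what the paper uses) is only the construction of the $X_j$'s in \S\ref{constr}, where that pointwise bound is established directly from alternative (1) of Proposition \ref{ing tr fn}(a), independently of any codimension statement.
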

\begin{remark}
It can be shown using Theorem \ref{tr fn} that we actually have equality in (2). 
Therefore, $m(r,-)$ 
can be seen as a ``discrete Legendre transform" of $c(r,-)$, 
so $m(r,s)$ is a convex function of $s$. (We do not know whether $c(r,j)=\codim X_j$ is a convex function of $j$, but the bounds we get from inequality (1) will always be convex.)
Applying (1) and then (2) (or vice versa) is an (idempotent) closure operator coming from a Galois connection specified by the right-hand sides of both inequalities.

Inequality (3) is a version of (2) with the role of $r$ and $2s$ switched (together with $X_j$ and $Y_j$).
Since the $T_i$'s are also of the form of $S$, 
any universal bound on $c(r,j)=\codim X_j$, in the sense that it holds for all sums of the form $S$ in Theorem \ref{main} (for fixed $n$), also applies to $c'(2s,j)=\codim Y_j$ if we simply replace $r$ by $2s$. Thus we can apply (1) and (3) alternately and repeatedly, which is what we refer to as bootstrapping and what we do in the next subsection.

The crucial point is that (3) has the power of breaking 
convexity and idempotency, because $r$ and $2s$ are switched: the bounds on $m(r,s)$ that we get from (1), which are convex in $r$, are usually not convex in $s$, and exactly this gives room for improvement.
In fact the iterated improvement process goes on forever; see Lemma \ref{iter}. The limit bound for $c(r,j)=\codim X_j$ will turn out to be  $\theta_j=\theta_j(n,r)$.

In reality, we do not actually compute the intermediate bounds we get during the iterative bootstrapping process, but instead use (1) and (3) repeatedly to  first obtain the limiting bound on $c(r,n-1)$, and then show that the bounds on all $c(r,j)$ we get after bootstrapping one more time is the best we can get. For details, see \S\ref{sec improve}.
\end{remark}

\begin{proof}
(1) Since $X_j$ is the union of smooth varieties on which the alternative (2) in Proposition \ref{ing tr fn}(a) holds, and since $\dim X_j$ is the maximum of the dimensions of these smooth varieties, we have
\[ \limsup_{\#k\to\infty} \frac{M_k(r,s)}{(\#k)^{\dim X}(\#k)^{(n+j)s}}  \ge  \limsup_{\#k\to\infty} \frac {\sum_{x\in X_j(k)}\abs{S_k(x)}^{2s}} {(\#k)^{\dim X}(\#k)^{(n+j)s}} \ge 1>0. \]
Taking the logarithm, we see that
\[ \limsup_{\#k\to\infty} \,(\log\#k) \parens*{ \frac{\log M_k(r,s)}{\log \#k} - (\dim X_j+(n+j)s)} >-\infty. \]
Since $\log\#k\to\infty$ as $\#k\to\infty$, we must have 
\[\limsup_{\#k\to\infty} \parens*{ \log_{\#k}\,M_k(r,s) - (\dim X_j+(n+j)s)} \ge 0 \] and hence
\begin{align*} m(r,s)&=\limsup_{\#k\to\infty}\,(\log_{\#k} M_k(r,s)-nr-ns) \\
&\ge \dim X_j+(n+j)s -nr-ns \\ & = js - \codim X_j. \end{align*}
Thus $c(r,j)=\codim X_j\ge js-m(r,s)$ after rearranging, so $c(r,j)\ge js-\floor{m(r,s)}$ because $c(r,j)$ is an integer.

(2) Consider the decomposition $\A^{nr}=\bigcup_{0\le j\le n} X_j\setminus X_{j+1}$, with $X_{n+1}=\varnothing$, and recall that $S_k(x)=O((\#k)^{(n+j)/2})$ as $k$ varies for $x\in X_j(k)\setminus X_{j+1}(k)\subset \A^{nr}(k)\setminus X_{j+1}(k)$. Moreover, $\#X_j(k)=O((\#k)^{\dim X_j})$ as $k$ varies.
Therefore
\begin{align*}
     M_k(r,s) & = \sum_{x\in\A^{nr}(k)} \abs{S_k(x)}^{2s} 
     = \sum_{j=0}^n \sum_{x\in X_j(k)\setminus X_{j+1}(k)} \abs{S_k(x)}^{2s} \\
     & = \sum_{j=0}^n O((\#k)^{\dim X_j})O((\#k)^{(n+j)s}),
\end{align*} 
so
\[m(r,s)\le\max_{0\le j\le n}\,(\dim X_j+(n+j)s-nr-ns)=\max_{0\le j\le n}\,(js-\codim X_j) .\]  

(3) Consider the decomposition $\A^{n\cdot2s}=\bigcup_{0\le j\le n}Y_j\setminus Y_{j+1}$, with $Y_{n+1}=\varnothing$, then $T_{i;k}(m)=O((\#k)^{(n+j)/2})$ as $k$ varies for all $1\le i\le r$ and $m\in Y_j(k)\setminus Y_{j+1}(k)=\A^{n\cdot2s}(k)\setminus Y_{j+1}(k)$. Moreover, $\#Y_j(k)=O((\#k)^{\dim Y_j})$ as $k$ varies. Therefore
\begin{align*}
     M_k(r,s)&=\sum_{m\in\A^{n\cdot2s}(k)}\prod_{i=1}^r T_{i;k}(m) = \sum_{j=0}^n \sum_{m\in Y_j(k)\setminus Y_{j+1}(k)} \prod_{i=1}^r T_{i;k}(m)\\
     &= \sum_{j=0}^n O((\#k)^{\dim Y_j}) O((\#k)^{(n+j)r/2}), 
\end{align*}
which yields 
\begin{align*} 
m(r,s) &\le \max_{0\le j\le n}\,(\dim Y_j+(n+j)r/2-nr-ns) \\
& = \max_{0\le j\le n}\, (   ns-(n-j)r/2-c'(2s,j)) \\
& = ns-nr/2+\max_{0\le j\le n}\, (jr/2-c'(2s,j)).
\end{align*}
\end{proof}


\subsection{The initial bound and iterated improvement}\label{sec improve}

In this section we aim to obtain initial bounds for the moments $M_k(r,s)$ to start the bootstrapping process.
Recall from (\ref{elem eqn+})
\[M_k(r,s):=\sum_{m\up{1},\dots,m\up{2s}\in k^n} \prod_{i=1}^r T_{i;k}(m\up{1},\dots,m\up{2s})\]
and from Proposition \ref{ing Weil bd} the Weil bound $T_{i;k}(m\up 1,\dots,m\up{2s})=O((\#k)^{n-1/2})$ for all but $O((\#k)^{ns})$ parameter values $(m\up1,\dots,m\up{2s})$. We apply the trivial bound $(\#k)^n$ to these $O((\#k)^{ns})$ parameter values, which yields
\begin{align*}
M_k(r,s) & = ((\#k)^n)^r O((\#k)^{ns}) + O((\#k)^{n-1/2})^r ((\#k)^n)^{2s}\\
& 
=
\case{ O((\#k)^{ns+nr}) & \text{if }s\le r/2n, \\ 
        O((\#k)^{n\cdot2s+(n-1/2)r}) & \text{if }s\ge r/2n.} 
\end{align*}
and therefore
\begin{equation}\label{init bd mrs} m(r,s) \le \case { 0 &\text{if }s\le r/2n, \\
    ns-r/2 &\text{if } s\ge r/2n.}
\end{equation}


Taking $s=\floor{r/2n}$ in inequality (1) in Lemma \ref{boot}, we have
 \[c(r,j)\ge\max_{s\in\N}\,(js-\floor{m(r,s)})\ge j\floor{r/2n},\]
 so $c'(2s,j)\ge j\floor{s/n}$. Now take $s\ge n\ceil{r/2}$, so that $\max_{0\le j\le n}\,(jr/2-c'(2s,j))$ is achieved at $j=0$, and hence $m(r,s)\le ns-nr/2$ by inequality (3). By inequality (1), we then obtain $c(r,j)\ge \ceil{js-(ns-nr/2)}$. For $j<n$, this bound is trivial as $-s+nr/2\le0$, but when $j=n$ we do get a nontrivial bound $c(r,n)\ge \ceil{nr/2}$, so $\theta_n=\ceil{nr/2}$ is indeed a lower bound for $\codim X_n$, and we have $c'(2s,n)\ge\ceil{n\cdot2s/2}=ns$. \\
 
We first aim to iteratively improve the bound on $c(r,n-1)$. This relies on the following lemma:

\begin{lemma} \label{theta plus}
For any function $\theta$ of the variable $r\in\N$, let $\theta^+$ be the function of $r$ defined by 
\[\theta^+(r)=\max_{s\in\N}\,\min\{\,(n-1)s,\,\ceil{r/2}-s+\theta(2s),\,-s+r\,\}.\]
If $\theta(r)$ is a universal lower bound for $c(r,n-1)$ for all $r$, then $\theta^+(r)$ is also.
\end{lemma}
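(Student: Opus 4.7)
The strategy is to chain inequalities (1) and (3) of Lemma \ref{boot}: first feed the hypothesis $\theta(r) \leq c(r, n-1)$ (applied to the $T_i$'s rather than $S$) into (3) to upper bound $m(r, s)$, then feed the resulting moment bound into (1) to derive the claimed lower bound on $c(r, n-1)$ for $S$.

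Before invoking (3), I collect lower bounds on $c'(2s, j)$ for every $j$. Since the $T_i$'s are character sums of the form of $S$ with parameter $2s$, the hypothesized universality of $\theta$ gives $c'(2s, n-1) \geq \theta(2s)$. The already-established initial bound $\theta_n = \lceil nr/2 \rceil$ applied with $r = 2s$ gives $c'(2s, n) \geq ns$, and monotonicity of $c'(2s, \cdot)$ in $j$ also yields $c'(2s, n) \geq \theta(2s)$, so $c'(2s, n) \geq \max(ns, \theta(2s))$. For all other $j$ I use only the trivial $c'(2s, j) \geq 0$.

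Plugging these into inequality (3) and estimating $\max_{0 \leq j \leq n}(jr/2 - c'(2s, j))$ term by term (the $j \leq n-2$ contributions are at most $(n-2)r/2$, the $j = n-1$ contribution is at most $(n-1)r/2 - \theta(2s)$, and the $j = n$ contribution is at most $nr/2 - \max(ns, \theta(2s))$), I obtain
\[m(r, s) \leq \max\{ns - r,\ ns - r/2 - \theta(2s),\ \min(0, ns - \theta(2s))\}.\]
Taking the integer floor of both sides, using $\lfloor -r/2 \rfloor = -\lceil r/2 \rceil$ and the fact that $\min(0, ns - \theta(2s)) \leq 0$, yields the integer bound
\[\lfloor m(r, s) \rfloor \leq \max\{ns - r,\ ns - \lceil r/2 \rceil - \theta(2s),\ 0\}.\]
Inequality (1) then gives $c(r, n-1) \geq (n-1)s - \lfloor m(r, s) \rfloor$, and subtracting from $(n-1)s$ turns this max into
\[c(r, n-1) \geq \min\{-s + r,\ \lceil r/2 \rceil - s + \theta(2s),\ (n-1)s\}.\]
Since this holds for every $s \in \N$, taking the maximum over $s$ delivers $c(r, n-1) \geq \theta^+(r)$, as required.

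The only non-routine step is the passage from the real-valued bound on $m(r, s)$ to the integer bound on $\lfloor m(r, s) \rfloor$: this is precisely where the asymmetric $\lceil r/2 \rceil$ in the definition of $\theta^+(r)$ comes from, and it matters when $r$ is odd. Everything else is a direct application of Lemma \ref{boot} together with the monotonicity of $c'(2s, j)$ in $j$ and the universality of the hypothesized bound $\theta$.
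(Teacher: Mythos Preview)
Your proof is correct and follows essentially the same route as the paper's: feed the universal bound $\theta$ into inequality (3) of Lemma \ref{boot} to control $m(r,s)$, then feed that into inequality (1) with $j=n-1$. The only difference is that you momentarily sharpen $c'(2s,n)\ge ns$ to $c'(2s,n)\ge\max(ns,\theta(2s))$ via monotonicity, but you immediately discard the gain by bounding $\min(0,ns-\theta(2s))\le 0$, so the final inequality is identical to the paper's. Your remark that the passage from $r/2$ to $\ceil{r/2}$ comes from the floor in inequality (1) is exactly right; note that this step (in both your argument and the paper's) tacitly uses that $\theta(2s)$ is an integer, which holds in the intended application since the iterates $\theta\up i$ are integer-valued.
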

\begin{proof}
Suppose that we have a universal bound $c(r,n-1)\ge\theta(r)$, then $c'(2s,n-1)\ge\theta(2s)$. Therefore, by inequality (3) in Lemma \ref{boot},
\begin{align*}
     m(r,s) & \le ns-nr/2+\max\{\,nr/2-ns,\,
(n-1)r/2-\theta(2s),\, (n-2)r/2,\, \dots,\, 0r/2\,\} \\ 
    & = \max\{0,\, -r/2+ns-\theta(2s),\, ns-r\}
\end{align*}
where we used the bounds $c'(2s,n)\ge ns$ and the trivial bounds $c'(2s,j)\ge0$ for $j<n-1$. By inequality (1), 
\begin{align*} c(r,n-1) & \ge \max_{s\in\N} \, ((n-1)s-\floor{m(r,s)}) \\
& \ge \max_{s\in\N}\min\{\,(n-1)s, \,\ceil{r/2}-s+\theta(2s),\, -s+r\,\}=\theta^+(r),
\end{align*}
so $\theta^+(r)$ is also a universal lower bound for $c(r,n-1)$.
\end{proof}

\begin{lemma}\label{iter}
Let $\theta\up 0(r):=0$ for all $r$, and define $\theta\up i$ inductively by
$\theta\up{i+1}=(\theta\up i)^+$, so that
\[\theta\up{i+1}(r)=\max_{s\in\N}\,\min\{\,(n-1)s,\,\ceil{r/2}-s+\theta\up i(2s),\,-s+r\,\}.\]
Then $\theta\up i(r)\nearrow\theta\up\infty(r):= \floor{(r-1)/2}=\ceil{r/2}-1$ as $i\to\infty$, for any $r\ge1$ and $n\ge2$. 
\end{lemma}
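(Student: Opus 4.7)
The plan is to prove three facts in sequence: (a) the sequence $\theta\up{i}(r)$ is monotonically non-decreasing in $i$ for each $r$; (b) it is uniformly bounded above by $\ceil{r/2}-1$ for all $r\ge1$; (c) the upper bound is attained at some finite step for each such $r$. For (a), note that the operator $\theta\mapsto\theta^+$ is monotone (only the middle term of the inner minimum depends on $\theta$, monotonically in $\theta(2s)$), so it suffices to check $\theta\up{1}\ge\theta\up{0}\equiv0$; indeed, taking $s=1$ in $\theta\up{1}(r)$ yields $\min\{n-1,\ceil{r/2}-1,r-1\}\ge0$ for every $r\ge1$ and $n\ge2$, and monotonicity of the operator propagates this to all subsequent $i$.

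For (b), induct on $i$; the base $i=0$ is immediate since $\ceil{r/2}-1\ge0$ for $r\ge1$. For the inductive step, bound the inner minimum at each $s$: when $s\ge1$, the induction hypothesis gives $\theta\up{i}(2s)\le s-1$, so the middle term is at most $\ceil{r/2}-s+(s-1)=\ceil{r/2}-1$; when $s=0$ (if permitted by the convention on $\N$), the first term equals $0\le\ceil{r/2}-1$. The max over $s$ therefore inherits the bound.

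For (c), induct on $r$. The cases $r\in\{1,2\}$ are immediate because $\ceil{r/2}-1=0=\theta\up{0}(r)$. For $r\ge3$, applying the strong induction hypothesis to each $r'\in\{1,\dots,r-1\}$ and taking the max of the finitely many resulting thresholds produces an $i_0$ such that $\theta\up{i_0}(r')=\ceil{r'/2}-1$ for every $r'<r$. Now choose $s:=\ceil{r/2}-1\ge1$ in the iteration formula: since $2s<r$, the strong induction gives $\theta\up{i_0}(2s)=s-1$, so the three quantities in the min become $(n-1)s$, $\ceil{r/2}-1$, and $r-s=\floor{r/2}+1$; for $n\ge2$ the first is $\ge s=\ceil{r/2}-1$, and the third satisfies $\floor{r/2}+1\ge\ceil{r/2}$, so the middle is the smallest. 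This yields $\theta\up{i_0+1}(r)\ge\ceil{r/2}-1$, which combined with (b) gives equality, and monotonicity maintains equality for all subsequent $i$.

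The only real subtlety is the choice $s=\ceil{r/2}-1$ in step (c): it is the largest integer with $2s<r$, which is precisely what lets the strong induction on $r$ supply the equality $\theta\up{i_0}(2s)=s-1$ needed to saturate the middle term of the min, while simultaneously keeping the first and third terms above this value. The hypothesis $n\ge2$ is used exactly to prevent the first term $(n-1)s$ from dipping below $s$ at this critical value of $s$.
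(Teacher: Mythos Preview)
Your proof is correct. Parts (a) and (b) match the paper's argument essentially verbatim. Part (c), however, is organized quite differently: the paper first reduces to the case $n=2$ (observing that the first term $(n-1)s$ only grows with $n$) and then runs an induction on $i$, proving the explicit quantitative bound
\[
\theta\up{i}(r)\ \ge\ \floor*{\frac{r}{2}\parens*{1-\frac{1}{i+1}}}
\]
by choosing $s=\floor{s_0}$ with $s_0=\ceil{r/2}\bigl(1-\tfrac{1}{i+2}\bigr)$ at each step, and finally reading off $\theta\up{r-1}(r)\ge\floor{(r-1)/2}$. You instead induct on $r$ and pick $s=\ceil{r/2}-1$, the largest integer with $2s<r$, so that the strong induction hypothesis supplies $\theta\up{i_0}(2s)=s-1$ directly and saturates the middle term. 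Your route is shorter and avoids both the reduction to $n=2$ and the floor/ceiling juggling around $s_0$; the paper's route, on the other hand, yields an explicit rate of convergence in $i$ for each fixed $r$ (and makes visible the remark that convergence is exponential for $n\ge3$). Both arguments ultimately hinge on the same mechanism, namely that once $\theta(2s)=s-1$ the middle term collapses to $\ceil{r/2}-1$.
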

\begin{proof}
We prove that $\theta\up i(r)\le\ceil{r/2}-1$ inductively. Consider the second term $\ceil{r/2}-s+\theta\up i(2s)$ in the definition of $\theta\up{i+1}(r)$. By induction hypothesis, $\theta\up i(2s)\le\ceil{2s/2}-1=s-1$ and hence $\ceil{r/2}-s+\theta\up i(2s)\le\ceil{r/2}-1$ for all $s\in\N$, thus $\theta\up{i+1}(r)\le\ceil{r/2}-1$.

If $\theta\up i(r)\ge\theta\up j(r)$ for all $r\in\N$, then $\theta\up i(2s)\ge\theta\up j(2s)$ for all $s\in\N$, so from the definition of $\theta\up{i+1}$ it is clear that $\theta\up{i+1}(r)\ge\theta\up{j+1}(r)$. Since clearly $\theta\up1(r)\ge\theta\up0(r)$ for all $r\in\N$, we see that $\theta\up{i+1}(r)\ge\theta\up i(r)$ for all $i$ by induction.

It remains to show that $\lim_{i\to\infty}\theta\up i(r)\ge\ceil{r/2}-1$. It suffices to deal with the case $n=2$, since the $\theta\up i(r)$ for $n>2$ is no smaller than the $\theta\up i(r)$ for $n=2$, as is clear from the inductive definition. When $n=2$, we shall show that 
\[\theta\up i(r)\ge\floor*{\frac{r}{2}\parens*{1-\frac{1}{i+1}}}\] by induction. (In fact equality holds if $r$ is even.) This inequality clearly holds for $i=0$. Assuming that it holds for $\theta\up i$, then \[\theta\up{i+1}(r)\ge\max_{s\in\N}\min\{\,s,\,\ceil*{\frac{r}{2}}-\ceil*{\frac{s}{i+1}},\,-s+r\,\}.\]
If we plot $s$, $\ceil{\frac{r}{2}}-\frac{s}{i+1}$ and $-s+r$ as functions of $s$, it is clear that we should look at the intersection of the first two lines, which corresponds to $s=s_0:=\ceil{\frac{r}{2}}(1-\frac{1}{i+2})$, or rather $s=\floor{s_0}$. Since $s_0= \ceil{\frac{r}{2}}-\frac{s_0}{i+1}$, we have \[\floor{s_0}=\floor*{\ceil*{\frac{r}{2}}-\frac{s_0}{i+1}}=\ceil*{\frac{r}{2}}-\ceil*{\frac{s_0}{i+1}}\le\ceil*{\frac{r}{2}}-\ceil*{\frac{\floor{s_0}}{i+1}}.\] Since $s_0<\ceil{\frac{r}{2}}$, we have $\floor{s_0}<\frac{r}{2}$, so $\floor{s_0}< -\floor{s_0}+r$. Therefore at $s=\floor{s_0}$, the minimum of three terms is $\floor{s_0}=\floor*{\ceil{\frac{r}{2}}(1-\frac{1}{i+2})}$, which is no less than $\floor{\frac{r}{2}(1-\frac{1}{i+2})}$, so $\theta\up{i+1}(r)\ge\floor{\frac{r}{2}(1-\frac{1}{i+2})}$. Now \[\lim_{i\to\infty}\theta\up i(r)\ge\theta\up{r-1}(r)\ge\floor*{\frac{r}{2}(1-\frac{1}{r})}=\floor*{\frac{r-1}{2}}.\]
\end{proof}

The function $\theta\up{i+1}$ is obtained from $\theta\up i$ by applying a functional $(\cdot)^+$, and $\theta\up\infty$ is a fixed point of this functional. This functional is monotonic, and this lemma shows that $\theta\up\infty$ is the limiting function obtained from applying the functional repeatedly. It is interesting to note that $n$ does not affect the limiting value (though for $n\ge3$ the convergence becomes exponential), and that we are unable to improve from $\ceil{r/2}-1$ to $\ceil{r/2}$.

Since all $\theta\up i(r)$ are universal lower bounds for $c(r,n-1)$, $\sup_{i\in\N}\theta\up i(r)=\ceil{r/2}-1$ is also a universal lower bound for $c(r,n-1)$. We now use $c(r,n)\ge\ceil{nr/2}$ and $c(r,n-1)\ge\ceil{(r-1)/2}$ to get bounds for all the other $c(r,j)$ ($1\le j\le n-2$). With this improved bound for $c(r,n-1)$, the bound for $m(r,s)$ in the proof of Lemma \ref{theta plus} becomes
\begin{equation}\label{c(r,n-1)}
m(r,s) \le \max\{\,0,(n-1)s-r/2+1,\,ns-r\,\}, \end{equation}
hence by inequality (1)
\[ c(r,j) \ge \max_{s\in\N}\min\{\, js,\, (j-n+1)s+\ceil{r/2}-1,\, (j-n)s+r\,\}. \]
Again, we look at $s=s_0:=(\ceil{\frac{r}{2}}-1)/(n-1)$ where the first two terms are equal. Clearly, the maximum
\[\max_{s\in\N}\min\{\, js,\, (j-n+1)s+\ceil{r/2}-1\,\} \]
is achieved at $s=\floor{s_0}$ or $s=\ceil{s_0}$ if $j<n$, and hence it is equal to $\max\{\,j\floor{s_0},\,-(n-j-1)\ceil{s_0}+\ceil{\frac{r}{2}}-1\,\}$. The third term $(j-n)s+r$ is greater than the first two terms both at $\floor{s_0}$ and at $\ceil{s_0}$, so it does not play a role: indeed, $(j-n)\ceil{s_0}+r>(j-n+1)\ceil{s_0}+\ceil{\frac{r}{2}}-1$ because $\ceil{s_0}\le\ceil{\frac{r}{2}}-1<\floor{\frac{r}{2}}+1=r-(\ceil{\frac{r}{2}}+1)$, so $(j-n)\floor{s_0}+r>(j-n+1)\floor{s_0}+\ceil{\frac{r}{2}}-1\ge j\floor{s_0}$. Writing $\floor{\frac{r-1}{2}}=\ceil{\frac{r}{2}}-1=(n-1)a+b$ with $a\in\N$ and $0\le b<n-1$, it is then easy to work out 
\begin{align*}
\theta_j=\theta_j(n,r)& :=\max\{\,j\floor{s_0},\,-(n-j-1)\ceil{s_0}+\ceil*{\frac{r-1}{2}}\,\}\\
& = ja + \max\{ 0, b+j-(n-1) \}
\end{align*}
for all $0\le j\le n-1$.
Combined with the bound $c(r,n)\ge\theta_n:=\ceil{nr/2}$ which we proved before, this is exactly what is claimed in Theorem \ref{main}.

If we just apply the bootstrapping process once, we actually already get bounds $\vartheta_j$ such that $\lim_{r\to\infty}\frac{\vartheta_j}{r}=\frac{j}{n}$; with all this complicated iterated improvement business, we only improve this limit to $\frac{j}{n-1}$, and the improvement becomes less and less significant as $n$ increases. However, we really cannot do better than our $\theta_j$'s using the bootstrapping method alone: even if we use $c(r,n)=\theta_n=\ceil{nr/2}$ and the better bounds $c(r,j)\ge \ceil{r/2}-1\ge \theta_j$ for all $1\le j\le n-1$ as input, the only effect is to improve (\ref{c(r,n-1)}) 
to \[m(r,s)\le \max\{0,\,(n-1)s-r/2+1,\,ns-nr/2\},\] i.e. to replace the third term $ns-r$ by the smaller $ns-nr/2$.
But for $j<n$, the arguments above has shown that we get the same result even without the third term, so $\theta_j$ is not improved using this bound for $m(r,s)$. 
For $j=n$, we still get $c(r,n)\ge\max_{s\in\N}\min\{ ns,\, s+\ceil{r/2}-1,\, \ceil{nr/2}\}= \ceil{nr/2}=\theta_n$.



\section{Lemmas and their proofs}

\subsection{An elementary transformation}

Burgess \cite[Lemma 2]{Burgess} used a transformation to express moments over a complete family of incomplete character sums in terms of an incomplete family of complete character sums. It has since been used as a routine to obtain Burgess type bounds. A simpler form of the transformation appeared already in \cite{DaEr}. 
We generalize this transformation to the situation where the summand is a product of $r$ factors; in our setting, it is used to express the moments of a complete family of complete character sums in terms of $r$ other complete families of complete character sums.


\begin{lemma} \label{elem}
Let $R$ be a commutative ring and let $\sigma_1,\dots,\sigma_s$ be automorphisms of $R$. Let $M$ and $X$ be sets, and let $f_1,\dots,f_r: M\times X\to R$ be functions. Let $S: X^r\to R$ be the function defined by 
\[ S(x\up 1,\dots,x\up r):=\sum_{m\in M}\prod_{i=1}^r f_i(m,x\up i). \]
Then
\[  \sum_{x\up1,\dots,x\up r\in X} \prod_{j=1}^s S(x\up1,\dots,x\up r)^{\sigma_j}
= \sum_{m\up 1,\dots,m\up s\in M} \prod_{i=1}^r T_i(m\up 1,\dots,m\up s), \]
where
\[ T_i(m\up 1,\dots,m\up s):=\sum_{x\in X}\prod_{j=1}^s f_i(m\up j,x)^{\sigma_j}. \]
\end{lemma}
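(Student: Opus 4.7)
The plan is a direct, purely formal manipulation: expand the products of $S^{\sigma_j}$, reindex, and interchange summations. Commutativity of $R$ together with the fact that each $\sigma_j$ is a ring homomorphism (hence commutes with finite sums and finite products) are the only inputs, and there is no analytic content.

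First, I would apply $\sigma_j$ term by term inside $S(x\up1,\dots,x\up r)$, renaming the bound summation variable to $m\up j$ in anticipation of the next step:
\[
S(x\up1,\dots,x\up r)^{\sigma_j} \;=\; \sum_{m\up j \in M}\prod_{i=1}^r f_i(m\up j, x\up i)^{\sigma_j}.
\]
Taking the product over $j=1,\dots,s$ and distributing, $\prod_{j=1}^s S(x\up1,\dots,x\up r)^{\sigma_j}$ becomes a single sum over $(m\up 1,\dots,m\up s)\in M^s$ whose summand is $\prod_{j=1}^s\prod_{i=1}^r f_i(m\up j,x\up i)^{\sigma_j}$.

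Next, I would swap the (finite) sum over $M^s$ past the outer sum over $X^r$ and exploit the key factorization: for fixed $m\up 1,\dots,m\up s$, commutativity of $R$ gives
\[
\prod_{j=1}^s\prod_{i=1}^r f_i(m\up j,x\up i)^{\sigma_j} \;=\; \prod_{i=1}^r\left(\prod_{j=1}^s f_i(m\up j,x\up i)^{\sigma_j}\right),
\]
and crucially the $i$th factor on the right involves only $x\up i$ among the $x$-variables. Consequently the sum over $X^r$ decouples as a product $\prod_{i=1}^r\sum_{x\up i\in X}(\,\cdots\,)$, and each inner sum is $T_i(m\up 1,\dots,m\up s)$ by definition, yielding the right-hand side of the claimed identity.

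The only obstacle is notational bookkeeping — tracking which index ($i$ or $j$) plays what role and presenting the factorization across $i$ cleanly. There is no substantive difficulty, since $M$ and $X$ are (implicitly) finite so all sums are legitimate, and $R$ is commutative so products may be rearranged freely.
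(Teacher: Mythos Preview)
Your proposal is correct and follows essentially the same approach as the paper: expand $S^{\sigma_j}$ termwise, distribute the product over $j$ into a sum over $M^s$, swap the order of summation with the sum over $X^r$, interchange $\prod_j\prod_i$ to $\prod_i\prod_j$ by commutativity, and then factor the $X^r$-sum as $\prod_i\sum_{x\in X}$ to recognize each factor as $T_i$. The paper's proof is exactly this chain of equalities, annotated only by ``distributive law'' at the two relevant steps.
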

\begin{remark}
In the case $X=M=\kappa^n$, $R=\C$, if we replace $s$ by $2s$ in this lemma, define $\sigma_j$ to be the trivial automorphism for $1\le j\le s$ and complex conjugation for $s+1\le j\le 2s$, and let $f_i(m,x)=\chi_i(F_i(m+x))$, we get Proposition \ref{ing elem}. If moreover $S$ is of the more specific form of $T_i$, this is an equality between the $2s$th moment of the $2r$-parameter sum and the $2r$th moment of the $2s$-parameter sum.
\end{remark}
\begin{proof} 
\begin{align*}
& \sum_{x\up1,\dots,x\up r\in X} \prod_{j=1}^s S(m,x\up 1,\dots,x\up r)^{\sigma_j} \\
=\, & \sum_{x\up1,\dots,x\up r\in X} \prod_{j=1}^s \sum_{m\in M} \prod_{i=1}^r f_i(m,x\up i)^{\sigma_j} \\
=\, & \sum_{x\up1,\dots,x\up r\in X} \sum_{m\up 1,\dots,m\up s\in M} \prod_{j=1}^s \prod_{i=1}^r f_i(m\up j,x\up i)^{\sigma_j} &&\text{(distributive law)}\\
=\, & \sum_{m\up 1,\dots,m\up s\in M} \sum_{x\up1,\dots,x\up r\in X} \prod_{i=1}^r \prod_{j=1}^s f_i(m\up j,x\up i)^{\sigma_j} \\
=\, & \sum_{m\up 1,\dots,m\up s\in M} \prod_{i=1}^r \sum_{x\in X} \prod_{j=1}^s f_i(m\up j,x)^{\sigma_j} &&\text{(distributive law)}\\
=\, & \sum_{m\up 1,\dots,m\up s\in M} \prod_{i=1}^r T_i(m\up1,\dots,m\up s).
\end{align*}
\end{proof}

\subsection{Geometric connected components} \label{geom conn}

In this section we review some facts about geometric connectedness, in preparation for the proof of Theorem \ref{tr fn}. If $\kappa$ is a field, let $\kappa^s$ denote its separable algebraic closure and $\ov{\kappa}$ its algebraic closure. Let $X$ be a connected scheme of finite type over $\kappa$. For any extension $k/\kappa$, let $X_k$ denote $X\times_\kappa k$. For any extension $k'/k$, $X_{k'}\to X_k$ induces a surjection $\pi_0(X_{k'})\to\pi_0(X_k)$ on the sets of connected components.

$G:=\Gal(\kappa^s/\kappa)$ acts on $\pi_0(X_{\kappa^s})$, which is identified with $\pi_0(X_{\ov{\kappa}})$ via the bijection $\pi_0(X_{\ov{\kappa}})\to\pi_0(X_{\kappa^s})$ \cite[Tag 0363]{Stacks}.
Let $G_0\trianglelefteq G$ denote the kernel of the action, and let $k_0$ denote the subfield of $\kappa^s$ fixed by $G_0$. 
Since $X$ is of finite type over $\kappa$, $X_{\kappa^s}$ is noetherian, so $\pi_0(X_{\kappa^s})$ is finite. Therefore, the connected components are clopen, $G_0$ is a subgroup of finite index of $G$, and $k_0/\kappa$ is a finite extension. We call $k_0$ the splitting field of $X/\kappa$, since it is the smallest extension of $\kappa$ that ``splits" the geometric connected components of $X/\kappa$ completely.
If $f\in\kappa[x]$ is an irreducible polynomial and $X=\Spec\kappa[x]/(f)$ then $k_0$ is the splitting field of $f$.

The action of $G$ on $\pi_0(X_{\kappa^s})$ is transitive: by \cite[Tag 038B]{Stacks}, the union of each orbit is the inverse image of a closed subset of $X$ under $X_{\kappa^s}\to X$. A partition of $\pi_0(X_{\kappa^s})$ into orbits then yields a partition of $X$ into finitely many nonempty disjoint closed subsets. Since $X$ is connected, there can only be one orbit.



\begin{lemma}
Let $k$ be an intermediate field of $\kappa^s/\kappa$. The following are equivalent:

\begin{enumerate}[label={(\arabic*)}]
\item every connected component of $X_k$ is geometrically connected;
\item $\pi_0(X_{\kappa^s})\to\pi_0(X_k)$ is injective (hence bijective);
\item $\Gal(\kappa^s/k)$ acts trivially on $\pi_0(X_{\kappa^s})$;
\item $\Gal(\kappa^s/k)\le G_0$;
\item $k_0\subset k$.
\end{enumerate}
If $k/\kappa$ is Galois, they are also equivalent to
\begin{enumerate}[resume,label={(\arabic*)}]
\item some connected component of $X_k$ is geometrically connected;
\item some fiber of $\pi_0(X_{\kappa^s})\to\pi_0(X_k)$ is a singleton;
\item the action of $\Gal(\kappa^s/k)$ on $\pi_0(X_{\kappa^s})$ has a fixed point.
\end{enumerate}
\end{lemma}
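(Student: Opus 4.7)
The plan is to reduce all the equivalences to the single structural fact that, for any intermediate field $k$, the surjection $\pi_0(X_{\kappa^s})\to\pi_0(X_k)$ has fibers equal to the orbits of $\Gal(\kappa^s/k)$ on $\pi_0(X_{\kappa^s})$. This is a standard Galois-descent fact, provable along the lines already used earlier in this subsection: $\Gal(\kappa^s/k)$ acts on $X_{\kappa^s}$ over $X_k$ and hence permutes the geometric connected components, and a connected component $C$ of $X_k$ is geometrically connected if and only if $C_{\kappa^s}$ is a single component of $X_{\kappa^s}$, if and only if its $\Gal(\kappa^s/k)$-orbit in $\pi_0(X_{\kappa^s})$ is a singleton.

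Given this dictionary, (1)$\Leftrightarrow$(2) is the assertion that every fiber is a singleton, and (2)$\Leftrightarrow$(3) is the tautology that a group action has all orbits of size one if and only if it is trivial. The equivalence (3)$\Leftrightarrow$(4) is immediate from the definition of $G_0$ as the kernel of the $G$-action, and (4)$\Leftrightarrow$(5) is the Galois correspondence applied to the Galois extension $\kappa^s/\kappa$: since $k_0=(\kappa^s)^{G_0}$ by definition and $k=(\kappa^s)^{\Gal(\kappa^s/k)}$, the containment $\Gal(\kappa^s/k)\le G_0$ is equivalent to $k_0\subset k$.

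For the Galois case, (6)$\Leftrightarrow$(7)$\Leftrightarrow$(8) follow from the same fiber-orbit dictionary with ``some'' in place of ``every''. The substantive step is (8)$\Rightarrow$(3), which uses the two facts already established in the paragraphs preceding the lemma: $G$ acts transitively on $\pi_0(X_{\kappa^s})$ (because $X$ is connected), and $H:=\Gal(\kappa^s/k)$ is normal in $G$ (because $k/\kappa$ is Galois). If $H$ fixes some $p$, then for any $q\in\pi_0(X_{\kappa^s})$, writing $q=g\cdot p$ by transitivity and using $g^{-1}hg\in H$ for any $h\in H$, one computes $h\cdot q=g\cdot(g^{-1}hg)\cdot p=g\cdot p=q$; so $H$ acts trivially, giving (3).

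I expect the main obstacle to be formulating the fiber-orbit correspondence cleanly; once this is in hand, every equivalence reduces to formal manipulations in Galois theory and elementary group-action bookkeeping. The conjugation argument is the only place where the Galois hypothesis on $k/\kappa$ (equivalently, the normality of $H$ in $G$) is used, which explains why (6)--(8) require this extra hypothesis.
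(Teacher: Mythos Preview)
Your proposal is correct and follows essentially the same route as the paper. The paper organizes the equivalences identically, citing \cite[Tag 038D]{Stacks} for your fiber--orbit dictionary and phrasing the (8)$\Rightarrow$(3) step in terms of conjugate stabilizers rather than your explicit conjugation computation, but these are the same argument.
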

\begin{remark}\label{rmk geom conn}
Since (5)$\implies$(1), that every connected component of $X_{k_0}$ is geometrically connected.
Now suppose that $\kappa$ is a finite field, so any algebraic extension of $\kappa$ is Galois. If $k_0\not\subset k$, no connected component of $X_k$ is geometrically connected, since (6)$\implies$(5); by \cite[Tag 04KV]{Stacks}, $X_k$ has no rational points.
\end{remark}
\begin{proof}

(1)$\iff$(2) and (6)$\iff$(7): if $Y\in\pi_0(X_k)$, the inverse image of $Y$ in $\pi_0(X_{\kappa^s})$ consists of the connected components of $Y_{\kappa^s}$, so it is a singleton iff $Y$ is geometrically connected.

(2)$\iff$(3) and (7)$\iff$(8) follow from \cite[Tag 038D (1)]{Stacks}.

(3)$\iff$(4) by definition of $G_0$. 
(4)$\iff$(5) by Galois theory. 
(3)$\implies$(8) is trivial.

Now assume that $k/\kappa$ is Galois, so $H:=\Gal(\kappa^s/k)$ is normal in $G$.

(8)$\implies$(3): if (8) holds, the stabilizer of some element $Y\in\pi_0(X_{\kappa^s})$ in $G$ contains $H$. Since the stabilizers of elements in the same $G$-orbit are conjugate in $G$, and since $H\trianglelefteq G$, we see that $H$ fixes the $G$-orbit of $Y$. Since $G$ acts transitively, $H$ fixes $\pi_0(X_{\kappa^s})$, i.e. (3) holds.
\end{proof}


\subsection{Moments of virtual lisse trace functions}
For an $\ov{\Q_\ell}$-sheaf $\mathcal{F}$ on a scheme $X$ over a finite field $\kappa$ and any finite extension $k/\kappa$, let $f_k:X(k)\to\C$ of $\mathcal{F}$ be defined by
\[f_k(x):=\iota(\Tr(\Frob_k \mid \mathcal{F}_{\ov{x}}))\]
where $\iota$ is a fixed isomorphism from $\ov{\Q_\ell}$ to $\C$, and $\ov{x}$ is a geometric point over $x\in X(k)$. We call the collection $\{f_k\}_{k/\kappa\text{ finite}}$'s for all finite extensions $k/\kappa$ the trace function of $\mathcal{F}$, thought of as a function in variables $k$ and $x$. 

All $\ov{\Q_\ell}$-sheaves appearing in this paper will be pure or mixed with integer weights with respect to any isomorphism $\ov{\Q_\ell}\to\C$, but all the arguments go through if we just fix one isomorphism. For simplicity, we shall talk about purity and mixedness without specifying the isomorphism.

\begin{theorem}\label{tr fn}
Let $X$ be a smooth variety over a finite field $\kappa$, and let $\{\mathcal{F}_i\}_{i=1}^N$ and $\{\mathcal{G}_i\}_{i=1}^{N'}$ be pure lisse $\ov{\Q_\ell}$-sheaves on $X$ (of integer weights). For every finite extension $k/\kappa$, let $(f_i)_k,(g_i)_k:X(k)\to\C$ denote the trace functions of $\mathcal{F}_i$ and $\mathcal{G}_i$ respectively, and let $t_k=\sum_{i=1}^N (f_i)_k-\sum_{i=1}^{N'} (g_i)_k$.
Then for each integer $w\in\Z$, either

(1) $\abs{t_k(x)}\le C(\#k)^{w/2}$ for every finite extension $k/\kappa$ and $x\in X(k)$, where $C=\sum_{i=1}^N\rank(\mathcal{F}_i)+\sum_{i=1}^{N'}\rank(\mathcal{G}_i)$, or 

(2) $\displaystyle\limsup_{\#k\to\infty}\frac{\sum_{x\in X(k)}\abs{t_k(x)}^{2s}}{(\#k)^{\dim X}(\#k)^{(w+1)s}}\ge1$ for all $s\ge1$ or $s=0$, in particular for all $s\in\N$. 
\end{theorem}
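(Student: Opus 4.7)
The plan is to isolate the top-weight contribution of $t_k$, apply Grothendieck--Lefschetz to the second moment, and combine integrality of weights with Schur orthogonality for $\pi_1^{\mathrm{arith}}$-characters. Using Section~3.2, after a finite base change we may assume $X$ is geometrically connected of dimension $d$ over $\kappa$; the two alternatives behave well under decomposition into geometric components. Form the virtual lisse sheaf $\mathcal{T} := \sum_i [\mathcal{F}_i] \ominus \sum_j [\mathcal{G}_j]$, whose trace function is $t_k$, and decompose by weight $\mathcal{T} = \bigoplus_{w'}\mathcal{T}^{(w')}$. Let $w^*$ be the largest $w'$ such that $\mathcal{T}^{(w')}$ is nonzero as a virtual semisimple $\pi_1^{\mathrm{arith}}(X)$-representation, equivalently (by Chebotarev) the largest $w'$ with $t_k^{(w')} \not\equiv 0$ as $k, x$ vary. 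The triangle inequality gives $|t_k(x)| \le C(\#k)^{w^*/2}$, so if $w^* \le w$ we are in case~(1). By integrality of weights, the remaining case is $w^* \ge w+1$.

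It suffices to show the $s=1$ form of case~(2), namely $\limsup_{\#k \to \infty} \sum_x |t_k|^2 / (\#k)^{d+w^*} \ge 1$. Writing $t_k = t^*_k + t'_k$ with $t^*_k$ the top-weight part and $|t'_k| = O((\#k)^{(w^*-1)/2})$, cross-term estimates give $\sum_x |t_k|^2 = \sum_x |t^*_k|^2 + O((\#k)^{d+w^*-1/2})$. Now $|t^*_k|^2$ is the (non-negative) trace function of $\mathcal{V} := \mathcal{T}^{(w^*)} \otimes \overline{\mathcal{T}^{(w^*)}}$, virtual pure of weight $2w^*$. Grothendieck--Lefschetz combined with Deligne's weight bounds (each $H^j_c(X_{\bar\kappa}, \mathcal{V})$ is mixed of weight $\le 2w^*+j$, with the top cohomology $H^{2d}_c \cong \mathcal{V}_{\pi_1^{\mathrm{geom}}(X)}(-d)$ achieving equality) then gives
\[
\sum_x |t^*_k(x)|^2 / (\#k)^{d+w^*} = \Tr(\Frob_k \mid W) + O((\#k)^{-1/2}),
\]
where $W := \mathcal{V}_{\pi_1^{\mathrm{geom}}(X)}(w^*)$ is a virtual $\Gal(\bar\kappa/\kappa)$-representation of weight $0$. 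Writing $\mathcal{T}^{(w^*)} = \mathcal{A} \ominus \mathcal{B}$, the identity $\mathcal{V}(w^*) = \End(\mathcal{T}^{(w^*)})$ together with Schur orthogonality says that the multiplicity $c_0$ of the trivial $\Gal$-representation in $W$ equals $\dim_{\ov{\Q_\ell}} \End_{\pi_1^{\mathrm{arith}}(X)}(\mathcal{T}^{(w^*)}) = \|\chi_\mathcal{A} - \chi_\mathcal{B}\|^2_{\pi_1^{\mathrm{arith}}(X)}$, a non-negative integer. By the choice of $w^*$ this is at least $1$. The Frobenius eigenvalues on $W$ have unit modulus, and those different from $1$ Ces\`aro-average to $0$, so the Ces\`aro mean of $\Tr(\Frob_\kappa^m \mid W)$ over $m$ equals $c_0$, forcing $\limsup_m \Tr(\Frob_\kappa^m \mid W) \ge c_0 \ge 1$. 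Since $w^* \ge w+1$, this also gives $\limsup \sum_x |t_k|^2/(\#k)^{d+w+1} \ge 1$.

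For general $s \ge 1$, Jensen's inequality applied to the uniform measure on $X(k)$ gives $\sum |t_k|^{2s} \ge \#X(k)^{1-s}(\sum |t_k|^2)^s$; combined with Lang--Weil's $\#X(k) = (\#k)^d(1+o(1))$ (which also handles $s=0$), this propagates the $s=1$ bound to $\limsup \ge 1$ for every $s \in \N$. The main obstacle is establishing $c_0 \ge 1$: it requires a careful interpretation of coinvariants and characters in the virtual (Grothendieck-group) setting and uses Chebotarev density to convert ``non-vanishing of the top-weight trace function'' into ``non-isomorphism of $\mathcal{A}$ and $\mathcal{B}$ as semisimple $\pi_1^{\mathrm{arith}}$-representations''; once $c_0 \ge 1$ is secured, the weight/Tate-twist accounting and the Jensen propagation are routine.
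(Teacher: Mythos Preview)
Your proof is correct and follows essentially the same route as the paper: reduce to the geometrically connected case, isolate the top weight $w^*$, apply Grothendieck--Lefschetz and Deligne's weight bounds so that only the coinvariants $H^{2d}_c$ survive after normalization, identify the key quantity with (virtual) $\pi_1^{\mathrm{arith}}$-invariants of $\End(\mathcal{T}^{(w^*)})$, and propagate from $s=1$ to all $s$ via Jensen and Lang--Weil. The only cosmetic differences are that the paper invokes Katz's lemma on multisets of unit-modulus eigenvalues where you use a Ces\`aro-mean argument, and the paper shows $1\in A$, $1\notin B$ after first stripping common irreducible constituents from $\mathcal{F}$ and $\mathcal{G}$, whereas you compute the net multiplicity $c_0=\|\chi_{\mathcal{A}}-\chi_{\mathcal{B}}\|^2$ directly via Schur; both rely on the same semisimplicity of tensor products in characteristic~$0$ that you allude to in your closing remark.
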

\begin{remark}\label{rmk tr fn}
Since the trace functions $t_k(x)$ in the statement of the theorem comes from a formal difference of lisse sheaves, we say that $t_k(x)$ is a ``virtual lisse trace function".

The two alternatives (1) and (2) are clearly mutually exclusive since $\#X(k)\le (\deg X)(\#k)^{\dim X}$ (see Remark \ref{rmk box}). 
We call the smallest $w$ that makes (1) true the maximum weight of the virtual trace function $\{t_k\}_{k/\kappa}$, which is also the largest $w$ such that the irreducible constituents of weight $w$ among the sheaves $\mathcal{F}_i$ and $\mathcal{G}_i$ do not all cancel out.

The theorem relates the cumulative and the pointwise behavior of a virtual lisse trace function. It shows that, although one cannot expect a trace function with maximum weight $>w$ has magnitude exceeding $(\#k)^{w+1}$ at every $k$-point, it indeed has such magnitude on average in terms of its $2s$-moments ($s\ge1$), if the variety is smooth and the sheaves are lisse. 
A result like this may be well-known to experts, but I cannot find a reference.
It is easier to prove if the virtual trace function is an actual trace function, so that no cancellation is possible. 
The $s=0$ case (with the convention $0^0=1$) of (2) can alternatively be obtained by applying the theorem to the constantly 1 trace function, or directly from the Lang--Weil bound. 

This lemma can be extended to the case where all $\mathcal{F}_i$, $\mathcal{G}_i$ are lisse and mixed and $X$ is normal:
an irreducible mixed lisse sheaf on a normal variety is pure and remains irreducible when restricted to a dense open smooth subvariety, and moreover its isomorphism class is determined by the restriction \cite[Lemma I.2.7 and Theorem I.2.8(3)]{KW}.
\end{remark}




\begin{proof} We first reduce to the case that $X$ is geometrically connected. Let $k_0$ be the splitting field (see \S\ref{geom conn}) of $X/\kappa$.
Let $\{X_j\}_{j=1}^J$ be the connected components of $X\times_\kappa k_0$, and consider the restrictions of $\mathcal{F}_i$ and $\mathcal{G}_i$ to the $X_j$'s. Suppose that the lemma is true for these $X_j$'s, which are geometrically connected (Remark \ref{rmk geom conn}). If (1) holds for all of the $X_j$'s, then (1) holds for $X\times_\kappa k_0=\bigcup_{j=1}^J X_j$, so (1) holds for $X$ if $k_0\subset k$. If $k_0\not\subset k$, (1) is vacuously true, since in that case $X(k)=\varnothing$ (Remark \ref{rmk geom conn}). On the other hand, if (2) holds for some $X_j$, then (2) holds for $X\times_\kappa k_0$ since $X_j\subset X\times_\kappa k_0$, hence it holds for $X$ since finite extensions of $k_0$ are also finite extensions of $\kappa$.

Thus we may assume that $X$ is geometrically connected. We then have $\#X(k)=(\#k)^{\dim X}+o((\#k)^{\dim X})$ as $\#k\to\infty$ (Lang--Weil), so we can substitute $\#X(k)$ for $(\#k)^{\dim X}$ in the limsup. Since $x\mapsto x^s$ is convex for $s\ge1$ or $s=0$, by Jensen's inequality,
\[\frac{1}{\#X(k)}\sum_{x\in X(k)}\parens*{\frac{\abs{t_k(x)}^2}{(\#k)^{w+1}}}^s\ge\
\parens*{
\frac{1}{\#X(k)}\sum_{x\in X(k)}\frac{\abs{t_k(x)}^2}{(\#k)^{w+1}}}^s_,\]
thus we see that the $s=1$ case of (2) implies (2) for arbitrary $s\ge1$ or $s=0$. We now focus on the case $s=1$.

Since the trace functions of a lisse sheaf are the sums of the trace functions of its irreducible constituents (with multiplicities), we may assume that all $\mathcal{F}_i$, $\mathcal{G}_i$ are irreducible. Furthermore, we can assume that no $\mathcal{F}_i$ is isomorphic to any $\mathcal{G}_j$, since isomorphic sheaves give rise to identical trace functions which cancel each other.
Let $w_0$ be the maximum weight that appears among the $\mathcal{F}_i$ and $\mathcal{G}_i$. If $w_0\le w$, (1) is true, so we assume that $w_0>w$, and aim to prove the stronger version  
of (2) with $w+1$ replaced by $w_0$. For this purpose, those $\mathcal{F}_i$ and $\mathcal{G}_i$ with weights $\le w_0$ become irrelevant, since their contribution to the limsup is zero. (When $\abs{t_k(x)}^2$ is expanded, any term that involves a 
pure lisse sheaf of weight $<w_0$ contributes at most $C^2 (\#k)^{w_0/2}(\#k)^{(w_0-1)/2}=O((\#k)^{w_0-\frac12})$, and $\#X(k)=O((\#k)^{\dim X})$.)

Thus we further assume that all $\mathcal{F}_i$, $\mathcal{G}_i$ are of weight $w_0$. Notice that $f_k:=\sum_{i=1}^N (f_i)_k$ and $g_k:=\sum_{i=1}^{N'} (g_i)_k$ are the trace functions of the semisimple lisse sheaves $\mathcal{F}:=\bigoplus_{i=1}^N\mathcal{F}_i$ and $\mathcal{G}:=\bigoplus_{i=1}^{N'}\mathcal{G}_i$ respectively. Since $\mathcal{F},\mathcal{G}$ have weight $w_0$, the trace functions of the duals $\mathcal{F}^\vee$ and $\mathcal{G}^\vee$ are $\ov{f_k}/(\#k)^{w_0}$ and $\ov{g_k}/(\#k)^{w_0}$ respectively,
so 
\[\abs{t_k}^2/(\#k)^{w_0}=\abs{f_k-g_k}^2/(\#k)^{w_0}=(f_k\ov{f_k}-f_k\ov{g_k}-g_k\ov{f_k}+g_k\ov{g_k})/(\#k)^{w_0}\] is the trace function of $\mathcal{A}:=\mathcal{F}\otimes\mathcal{F}^\vee\oplus\,\mathcal{G}\otimes\mathcal{G}^\vee$ minus that of $\mathcal{B}:=\mathcal{F}\otimes\mathcal{G}^\vee\oplus\,\mathcal{G}\otimes\mathcal{F}^\vee$.

By the Grothendieck--Lefschetz trace formula, 
\[\sum_{x\in X(k)}\abs{t_k(x)}^2/(\#k)^{w_0}=\sum_{j=0}^{2\dim X}\Tr\parens*{\Frob_\kappa^{[k:\kappa]}\mid H^j_c(X_{\ov{\kappa}},\mathcal{A})}-
\Tr\parens*{\Frob_\kappa^{[k:\kappa]}\mid H^j_c(X_{\ov{\kappa}},\mathcal{B})}\]
where $X_{\ov{\kappa}}:=X\times_\kappa\ov{\kappa}$. Since $\mathcal{A}$ and $\mathcal{B}$ are pure of weight 0, the eigenvalues of $\Frob_\kappa^{[k:\kappa]}$ acting on both $H^j_c$ have modulus $\le ((\#\kappa)^{j/2})^{[k:\kappa]}=(\#k)^{j/2}$ (\cite[Theorem 3.3.1]{Weil II}, \cite[Theorem I.7.1]{KW}), so $H_j^c$ contributes zero to the limsup unless $j=2\dim X$. Since $X$ is smooth and geometrically connected, if $\mathcal{H}$ is a lisse $\ov{\Q_\ell}$-sheaf on $X$, 
Poincar\'e duality yields 
\[H^{2\dim X}_c(X_{\ov{\kappa}},\mathcal{H})\cong H^0(X_{\ov{\kappa}},\mathcal{H}^\vee)^\vee(-\dim X)\cong((\mathcal{H}^\vee)^{\pi_1(X_{\ov{\kappa}})})^\vee(-\dim X)\]
as representations of $\pi_1(X)/\pi_1(X_{\ov{\kappa}})\cong\hat{\Z}=\ov{\angles{\Frob_\kappa}}$, so \[\Tr\parens*{\Frob_{\kappa}^{[k:\kappa]}\mid H_c^{2\dim X}(X_{\ov{\kappa}},\mathcal{H})}=(\#k)^{\dim X}\sum_i\lambda_i^{-[k:\kappa]},\] where the $\lambda_i$ are the Frobenius eigenvalues (each appearing as many times as its algebraic multiplicity) on the space of geometric invariants $(\mathcal{H}^\vee)^{\pi_1(X_{\ov{\kappa}})}$. Therefore, if the Frobenius eigenvalues on $(\mathcal{A}^\vee)^{\pi_1(X_{\ov{\kappa}})}\cong\mathcal{A}^{\pi_1(X_{\ov{\kappa}})}$ and $(\mathcal{B}^\vee)^{\pi_1(X_{\ov{\kappa}})}\cong\mathcal{B}^{\pi_1(X_{\ov{\kappa}})}$ are the multi-sets $A$ and $B$ respectively, we have
\[\limsup_{\#k\to\infty}\frac{\sum_{x\in X(k)}\abs{t_k(x)}^2}{(\#k)^{\dim X}(\#k)^{w_0}}=\limsup_{\#k\to\infty}\parens*{\sum_{\lambda\in A}\lambda^{-[k:\kappa]}-\sum_{\lambda\in B}\lambda^{-[k:\kappa]}}.\]
Notice that all these eigenvalues $\lambda$ have modulus 1, since $\mathcal{A}$ and $\mathcal{B}$ are pure of weight 0. Therefore, by \cite[Lemme 2.2.2.2]{Katz-Sommes}, the limsup is at least 1 if $A\neq B$. (In fact, Katz showed that the limsup is at least the square root of the cardinality of the symmetric difference $A\triangle B$ of the multisets $A$ and $B$.)

We now show that $1\in A$ but $1\notin B$.
Recall we assumed that the sheaves (representations) $\mathcal{F}$ and $\mathcal{G}$ are sums of irreducibles, i.e. semisimple. Since duals, tensor products (over the field $\ov{\Q_\ell}$ of characteristic 0), and quotients of semisimple representations are semisimple, we find that $\mathcal{A}^{\pi_1(X_{\ov{\kappa}})}$ and $\mathcal{B}^{\pi_1(X_{\ov{\kappa}})}$ are semisimple $\pi_1(X)$-representations, and since the actions of $\pi_1(X)$ factor through $\hat{\Z}=\ov{\angles{\Frob_\kappa}}$, they are semisimple as $\hat{\Z}$-representations. Since $\Z=\angles{\Frob_\kappa}$ is dense in $\hat{\Z}$ and the action is continuous, $\hat{\Z}$-irreducibles remain irreducible under the $\Z$-action, so $\mathcal{A}^{\pi_1(X_{\ov{\kappa}})}$ and $\mathcal{B}^{\pi_1(X_{\ov{\kappa}})}$ are semisimple  $\Z$-representations, which just means that the action of Frob is diagonalizable. Therefore, the (algebraic) multiplicities of the eigenvalue 1 equal the dimensions of the eigenspaces (the geometric multiplicities). But the eigenspaces associated with eigenvalue 1 simply consist of the elements fixed by $\Frob_\kappa$. Again, since $\angles{\Frob_\kappa}$ is dense in $\hat{\Z}$ through which the actions of $\pi_1(X)$ factor, these eigenspaces are just $\mathcal{A}^{\pi_1(X)}$ and $\mathcal{B}^{\pi_1(X)}$. Since 
\begin{align*}&\mathcal{A}=
\mathcal{F}\otimes\mathcal{F}^\vee\oplus\,\mathcal{G}\otimes\mathcal{G}^\vee\cong\Hom(\mathcal{F,F})\oplus\Hom(\mathcal{G,G})\\
\text{ and \quad}&\mathcal{B}=\mathcal{F}\otimes\mathcal{G}^\vee\oplus\,\mathcal{G}\otimes\mathcal{F}^\vee\cong\Hom(\mathcal{G,F})\oplus\Hom(\mathcal{F,G}),\end{align*} 
we have $\mathcal{A}^{\pi_1(X)}\cong\Hom_{\pi_1(X)}(\mathcal{F,F})\oplus\Hom_{\pi_1(X)}(\mathcal{G,G})\neq0$ (since we assumed that the weight $w_0$ appears in $\mathcal{F}$ or in $\mathcal{G}$, $\mathcal{F}$ and $\mathcal{G}$ cannot both be trivial) and $\mathcal{B}^{\pi_1(X)}=\Hom_{\pi_1(X)}(\mathcal{G,F})\oplus\Hom_{\pi_1(X)}(\mathcal{F,G})=0$ (since we assumed that $\mathcal{F}$ and $\mathcal{G}$ have no common irreducible constituents). Thus $1\in A$ but $1\notin B$, hence $A\neq B$, which completes the proof.
\end{proof}

\subsection{The multivariate Weil bound}

\begin{lemma}\label{Weil bd}
If $k$ is a finite field, $\chi:k^\times\to\C^\times$ is a multiplicative character of order $d$, and $F\in k(x_1,\dots,x_n)$ is not a perfect $d$th power over $\ov{k}$ (equivalently, $F$ is not of the form $aG^d$ with $a\in k^\times$ and $G\in k(x_1,\dots,x_n)$; see Lemma \ref{reduced}), then
\[\abs*{\sum_{x\in k^n}\chi(F_(x))}\le C(\#k)^{n-1/2},\]
where $C$ depends only on $n$ and $\deg F$.
\end{lemma}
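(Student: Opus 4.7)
The plan is to proceed by induction on $n$. The base case $n = 1$ is the classical Weil bound for one-variable multiplicative character sums (Weil 1948, or Schmidt's \emph{Equations over finite fields}): for $F \in k(t)$ not a perfect $d$th power in $\ov k(t)$, one has $\abs*{\sum_{t \in k} \chi(F(t))} \le C_0 (\#k)^{1/2}$ with $C_0$ depending only on $\deg F$.

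For the inductive step with $n \ge 2$, I would slice. Fix a direction $v \in k^n \setminus \{0\}$ and a complementary affine hyperplane $H \cong \A^{n-1}$, and parameterize lines in direction $v$ by their basepoint $y \in H$:
\[\sum_{x \in k^n} \chi(F(x)) = \sum_{y \in H(k)} \sum_{t \in k} \chi(F_y(t)), \qquad F_y(t) := F(y+tv).\]
The strategy is to apply the one-variable Weil bound to the inner sum whenever $F_y$ is not a perfect $d$th power in $\ov k(t)$, and the trivial bound otherwise. The key algebraic claim is that for generic $v$, the exceptional set
\[Z_v := \{ y \in H : F_y \text{ is a perfect $d$th power in } \ov k(t) \}\]
is a proper closed subvariety of $H$ of degree bounded in terms of $n$ and $\deg F$. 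Writing $F = \prod P_i^{e_i}$ in lowest terms with $P_i \in \ov k[x]$ irreducible and at least one $e_i \not\equiv 0 \pmod d$ (by hypothesis together with Lemma \ref{reduced}), the argument is that for generic $v$ each restriction $P_i(y + tv)$ is squarefree in $t$ and distinct $P_i$ restrict to coprime polynomials in $t$, provided $y$ avoids a proper closed subvariety of $H$; on that complement the multiplicities $e_i$ are preserved under restriction, so $F_y$ inherits the non-$d$th-power property from $F$.

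Given such a $v$, the contribution from $y \notin Z_v(k)$ is at most $(\#k)^{n-1} \cdot C_0 (\#k)^{1/2} = O((\#k)^{n-1/2})$, while the contribution from $y \in Z_v(k)$, which has cardinality $O((\#k)^{n-2})$ by B\'ezout with an implied constant depending only on $n$ and $\deg F$, is bounded trivially by $O((\#k)^{n-2}) \cdot \#k = O((\#k)^{n-1})$ and absorbed into the main term. A good direction $v$ with coordinates in the prime subfield exists for all sufficiently large $\#k$ (depending only on $n$ and $\deg F$); for the finitely many smaller $\#k$ one enlarges $C$ so that the bound holds trivially.

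The main obstacle is the algebraic claim about $Z_v$: that a generic pencil of parallel lines in $\A^n$ restricts the divisor of $F$ to a family whose generic member still has a prime component with multiplicity $\not\equiv 0 \pmod d$. I would establish this by a direct dimension count on the incidence variety in $H \times \A^1$ cut out by the equations $P_i(y + tv) = 0$, using that at least one $e_i \not\equiv 0 \pmod d$; the locus $Z_v$ is then defined by resultant-type conditions whose degrees are polynomial in $n$ and $\deg F$. Alternatively, one could bypass slicing altogether by invoking Grothendieck--Lefschetz together with Deligne's Weil II bounds on $H^\ast_c(\A^n_{\ov k}, \mathcal{L}_\chi(F))$, which gives the weight bound directly but makes effective control of the constant $C$ more delicate.
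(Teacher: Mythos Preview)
Your approach is correct in outline and genuinely different from the paper's. The paper proves the lemma directly via $\ell$-adic cohomology: it pulls back the Kummer sheaf $\mathcal{L}_d$ on $\Gm{k}$ along the morphism defined by $F$, applies the Grothendieck--Lefschetz trace formula, observes that $H^{2n}_c$ vanishes because the pullback sheaf has no geometric invariants (precisely because $F$ is not a $d$th power over $\ov k$), and then invokes Deligne's weight bound on the remaining $H^j_c$ together with Katz's uniform Betti-number bound to control $C$. Your slicing argument instead reduces everything to the classical one-variable Weil bound plus elementary algebraic geometry (discriminants and resultants bounding $\deg Z_v$), avoiding Weil II in dimension $>1$; this is essentially the route the paper attributes to Shparlinski \cite{Shp} for polynomial $F$, and your sketch extends it to rational $F$ without real difficulty. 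The cohomological proof is shorter and more conceptual; yours is more self-contained and makes the dependence of $C$ on $\deg F$ explicit, though your remark that the Weil II route ``makes effective control of $C$ more delicate'' is not quite fair, since Katz's bound \cite{sumBetti} handles exactly this.

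One small slip: you ask for a good direction $v$ with coordinates in the \emph{prime subfield}. This is too restrictive. For fixed small characteristic $p$ the prime field $\F_p$ does not grow with $\#k$, and the bad-direction locus in $\bbP^{n-1}$ (which depends on $F$, not merely on $n$ and $\deg F$) could in principle contain all of $\bbP^{n-1}(\F_p)$ when $p$ is small relative to $\deg F$. You should instead take $v\in k^n$: the bad directions form a proper subvariety of $\bbP^{n-1}$ of degree bounded in terms of $n$ and $\deg F$ alone, so once $\#k$ exceeds a threshold depending only on those quantities a good $k$-rational direction exists, and the resulting bounds on $C_0$ and $\deg Z_v$ are uniform in the particular $v$ chosen.
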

\begin{remark}\label{rmk Weil bd}
This lemma was proved for polynomial $F$ in \cite[Lemma 6]{Shp} without using $\ell$-adic sheaves. 
We use the machinery of Weil II to obtain a proof for rational functions that is more conceptual.
Applying the lemma to $\chi_i\circ\Norm_{k/\kappa}$ which has order equal to $d_i=\ord\chi_i$, we get Proposition \ref{ing Weil bd}(b).
\end{remark}


\begin{proof}
Let $d:=\ord\chi$ and let $\ell$ be a prime other than $\Char k$. Let $\mathcal{L}_d$ be the lisse $\ov{\Q_\ell}$-sheaf of weight 0 on $\Gm{k}$ associated to the $\ell$-adic representation $\pi_1(\Gm{k})\to\Z/d\Z\cong \mu_d\subset\ov{\Q_\ell}^\times$ where the first map is associated to the cyclic \'etale covering $\Gm{k}\to\Gm{k}$ defined by $x\mapsto x^d$.

Let $V$ be the open subvariety of $\A^n_k$ on which both the numerator and the denominator of $F$ is nonzero, and let $f:V\to\Gm{k}$ be defined by $F$. 
By the Grothendieck--Lefschetz trace formula, \[\sum_{x\in k^n}\chi(F(x))=\sum_{j=0}^{2n}\Tr\parens*{\Frob_k\mid H^j_c(V_{\ov{k}},f^*\mathcal{L}_d)}.\]
If $F$ is not a perfect $d_i$th power over $\ov{k}$, $f^*\mathcal{L}_d$ is not geometrically constant (see \ref{rmk lift} below), and since it is of rank 1, it has no geometric invariants, thus $H^{2n}_c\cong H^0$ vanishes.
Moreover, $H^j_c$ with $j<2n$ has weights $\le j\le 2n-1$ \cite[Theorem 3.3.1]{Weil II} since $f^*\mathcal{L}_d$ is pure of weight 0. Therefore,
\[ \abs*{\Tr\parens*{\Frob_k\mid H^j_c(V_{\ov{k}},f^*\mathcal{L}_d)}} \le \rank(H^j_c(V_{\ov{k}}))\cdot (\#k)^{(2n-1)/2}. \]
Since the ranks of the $H^j_c$ are bounded by Katz's constant $C$ which depends only on $n$ and $\deg F$, we obtain $\abs*{\sum_{x\in k^n}\chi(F(x))}\le C(\#k)^{n-1/2}$.
\end{proof}

\begin{lemma}
Let $X$ and $Z$ be connected schemes, let $G$ be a finite group, and let $\pi_1(X)\to\Gal(Y/X)\cong G$ be a surjective homomorphism associated to a Galois \'etale covering $\varphi:Y\to X$. Let $G\to\GL_N(\ov{\Q_\ell})$ be a faithful representation and let $\mathcal{L}$ denote the $\ov{\Q_\ell}$-sheaf associated to its composition with $\pi_1(X)\to G$. Then for any morphism $f:Z\to X$, $f$ factors through $\varphi$ iff $f^*\mathcal{L}$ is constant.
\end{lemma}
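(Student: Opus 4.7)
The plan is to translate the statement about sheaves into a statement about representations of étale fundamental groups and then apply the tautological Galois correspondence. Since $Z$ is connected, a lisse $\ov{\Q_\ell}$-sheaf on $Z$ is constant if and only if the corresponding continuous representation of $\pi_1(Z)$ is trivial. (Pick a geometric point $\bar z$ of $Z$ and a compatible geometric point $\bar x = f(\bar z)$ of $X$; a lisse rank $N$ sheaf is given by a continuous representation of $\pi_1(Z,\bar z)$ on $\ov{\Q_\ell}^N$, and it is constant iff this representation is trivial, since $Z$ is connected.) The sheaf $f^*\mathcal{L}$ corresponds to the composition
\[ \pi_1(Z,\bar z) \xrightarrow{f_*} \pi_1(X,\bar x) \longrightarrow G \xrightarrow{\rho} \GL_N(\ov{\Q_\ell}), \]
where $\rho$ denotes the faithful representation of the hypothesis.

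First I would handle the ``if'' direction. Suppose $f^*\mathcal{L}$ is constant. Then the displayed composition is trivial. Because $\rho$ is faithful, this already forces the composition $\pi_1(Z,\bar z) \to \pi_1(X,\bar x) \to G$ to be trivial, i.e.\ the image of $f_*$ is contained in the kernel of $\pi_1(X,\bar x) \to G$. By the construction of $Y$ as the connected Galois covering associated to this surjection, that kernel is precisely the image of $\pi_1(Y,\bar y) \to \pi_1(X,\bar x)$ for a suitably chosen geometric point $\bar y$ of $Y$ over $\bar x$. The classical Galois correspondence for finite étale covers (SGA~1; or \cite[Tag 0BND]{Stacks}) then yields a unique morphism $g:Z\to Y$ with $\varphi\circ g = f$, which is exactly the desired factorization.

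For the ``only if'' direction, suppose $f=\varphi\circ g$ for some $g:Z\to Y$. Then $f^*\mathcal{L} = g^*\varphi^*\mathcal{L}$, so it is enough to show that $\varphi^*\mathcal{L}$ is constant on $Y$. But $\varphi^*\mathcal{L}$ is the lisse sheaf on the connected scheme $Y$ corresponding to the composition $\pi_1(Y,\bar y)\to\pi_1(X,\bar x)\to G\xrightarrow{\rho}\GL_N(\ov{\Q_\ell})$, and by the very definition of $Y\to X$ as the Galois covering associated to $\pi_1(X,\bar x)\twoheadrightarrow G$, the image of $\pi_1(Y,\bar y)$ in $G$ is trivial. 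Hence the representation is trivial and $\varphi^*\mathcal{L}$ is constant.

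The only mildly delicate point is keeping track of base points and ensuring that ``kernel of $\pi_1(X,\bar x)\to G$'' really corresponds to the connected covering $Y\to X$ we started with; but this is standard and independent of the sheaf-theoretic reformulation. No new input beyond the dictionary between finite lisse $\ov{\Q_\ell}$-sheaves on a connected scheme and continuous finite-dimensional $\ov{\Q_\ell}$-representations of its étale fundamental group, plus the Galois correspondence for finite étale covers, is needed.
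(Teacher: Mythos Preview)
Your proof is correct and follows essentially the same approach as the paper: both translate constancy of $f^*\mathcal{L}$ into triviality of the composition $\pi_1(Z)\to\pi_1(X)\to G\to\GL_N(\ov{\Q_\ell})$, use faithfulness of $\rho$ to deduce that $\pi_1(Z)\to G$ is trivial, and then produce the lift. The only cosmetic difference is in how the lift is exhibited in the ``if'' direction: you invoke the abstract lifting criterion from the Galois theory of \'etale covers, whereas the paper argues directly that the pullback $Y\times_X Z\to Z$ is a trivial finite \'etale cover (since $\pi_1(Z)$ acts trivially on its fiber) and takes a section---which is of course just an explicit incarnation of the same lifting criterion. (Your parenthetical ``unique'' is slightly too strong without fixing a lift of the base point, but existence is all that is needed.)
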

\begin{remark}\label{rmk lift}
If $X=Y=\Gm{\ov{k}}$, $\varphi$ is the $d$th power map, $Z=V_{\ov{k}}$, and $f:Z\to X$ is defined by $F$, then by the lemma we see that 
$F$ is a perfect $d$th power in $\ov{k}(x_1,\dots,x_n)\iff f$ factors through $\varphi \iff f^*\mathcal{L}$ is constant.
\end{remark}
\begin{proof}
$(\implies)$ Notice that $\pi(Y)$ is exactly the kernel of $\pi_1(X)\to G$. If $f$ factors through $\varphi$, the representation associated to $f^*\mathcal{L}$, which is $\pi_1(Z)\to\pi_1(X)\to G\to\GL_N(\ov{\Q_\ell})$, factors through $\pi_1(Y)$ and hence is trivial.

$(\impliedby)$ Consider the following commutative diagram
\[\begin{tikzcd} Y\times_X Z \arrow{r} \arrow[swap]{d} & Y \arrow{d} \\ Z \arrow{r} & X \end{tikzcd}\]
$\pi_1(Z)$ acts on $Y\times_X Z$ via the action of $\pi_1(X)$ on $Y$. If $f^*\mathcal{L}$ is trivial, $\pi_1(Z)\to\pi_1(X)\to G\cong\Gal(Y/X)$ is trivial because $G\to\GL_N(\ov{\Q_\ell})$ is faithful, so $\pi_1(Z)$ acts trivially on $Y$ and hence on $Y\times_X Z$. Since $Y\times_X Z\to Z$ is an \'etale covering, it must be an isomorphism on every connected component, so in particular it has a section $Z\to Y\times_X Z$. Composing this section with $Y\times_X Z\to Y$ yields a lift $Z\to Y$ of $f:Z\to X$.
\end{proof}


\subsection{Mutual transversality of subspaces of a vector space}

\begin{lemma}\label{vec sp}
If $k$ is a field and $\{V_j\}_{j=1}^N$ are $k$-subspaces of a vector spaces $V$ over $k$, then there exists a basis $E$ of $V$ and pairwise disjoint subsets $\{E_j\}_{j=1}^N$ of $E$ such that $\bigcap_{j=1}^N V_j=\Span(E\setminus\bigcup_{j=1}^N E_j)$ and $V_j\subset\Span(E\setminus E_j)$ for $1\le j\le N$.
\end{lemma}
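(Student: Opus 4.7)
The plan is to dualize the problem, where it reduces to a routine inductive construction, and then translate back. For a basis $E$ of $V$ and $e\in E$, let $e^\ast\in V^\ast$ denote the associated coordinate functional $\sum_\alpha \lambda_\alpha e_\alpha \mapsto \lambda_e$. Then $\Span(E\setminus S)=\bigcap_{e\in S}\ker e^\ast$ for any $S\subset E$, so, setting $B_j=\{e^\ast:e\in E_j\}$, the two conditions of the lemma translate respectively to $B_j\subset V_j^\perp$ for each $j$, and $\Span(\bigcup_j B_j)=\sum V_j^\perp$ (using the identity $(\sum V_j^\perp)^\perp=\bigcap V_j$, which holds in finite dimension). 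Thus the lemma reduces to producing disjoint subsets $B_1,\ldots,B_N$ of $V^\ast$ with $B_j\subset V_j^\perp$ whose union is a basis of $\sum_j V_j^\perp$, and which can then be extended to a basis $B=B_0\sqcup B_1\sqcup\cdots\sqcup B_N$ of $V^\ast$; the primal data $(E,\{E_j\})$ is recovered by taking the dual basis.

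I would construct $B_1,\ldots,B_N$ by induction on $N$. Take $B_1$ to be any basis of $V_1^\perp$. Given disjoint sets $B_1,\ldots,B_{N-1}$ with $B_j\subset V_j^\perp$ whose union is a basis of $\sum_{j<N}V_j^\perp$, let $B_N$ be a basis of any direct complement of $V_N^\perp\cap\sum_{j<N}V_j^\perp$ inside $V_N^\perp$. Then $B_N\subset V_N^\perp$ and is linearly independent modulo $\sum_{j<N}V_j^\perp$, so it is disjoint from the earlier $B_j$'s and extends them to a basis of $V_N^\perp+\sum_{j<N}V_j^\perp=\sum_{j\le N}V_j^\perp$. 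Finally, extend $\bigsqcup_j B_j$ to a basis of $V^\ast$ by adjoining $B_0$, a basis of any complement of $\sum V_j^\perp=W^\perp$ in $V^\ast$ (where $W=\bigcap V_j$). Translating back via the dual basis, all four conditions of the lemma are verified directly from the correspondences above.

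The main obstacle I expect is the infinite-dimensional case, where $V\not\cong V^{\ast\ast}$ in general so the dual basis of $B$ need not span $V$, and the identity $(\sum V_j^\perp)^\perp=\bigcap V_j$ can fail. This can be sidestepped either by a Zorn-type argument (since the required property of $(E,\{E_j\})$ has finite character in $E$), or by rerunning the same inductive construction directly inside $V$: starting from a basis $E_0$ of $W$ and working along a filtration $W\subset\bigcap_{j<N}V_j\subset\cdots\subset V_1\subset V$, choose successive complements and take their bases as the $E_j$'s (in reverse order of $j$). In any case, for the application later in this paper the lemma is only invoked in finite-dimensional settings, so the finite-dimensional argument already suffices.
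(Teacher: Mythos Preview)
Your dual argument in finite dimensions is correct and clean: building disjoint $B_j\subset V_j^\perp$ whose union is a basis of $\sum_j V_j^\perp$, extending to a basis of $V^\ast$, and taking the dual basis does exactly what is needed. This is a genuinely different route from the paper, which instead works entirely inside $V$: it first enlarges each $V_j$ to a subspace $W_j\supset V_j$ so that the $W_j$ become \emph{mutually transverse} (i.e.\ $(\bigcap_{i<j}W_i)+W_j=V$ for all $j$) while keeping $\bigcap_j W_j=\bigcap_j V_j$, and then observes that mutual transversality is equivalent to the existence of the desired basis. Your approach is arguably more direct (one induction instead of two auxiliary lemmas) and makes the duality with linear independence explicit; the paper's approach has the advantage of working verbatim in infinite dimensions, which the paper remarks on.

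Your proposed infinite-dimensional fallback, however, has a gap. Choosing successive complements along the filtration $W\subset\bigcap_{j<N}V_j\subset\cdots\subset V_1\subset V$ and assigning their bases as the $E_j$ does yield a basis $E$ with $\Span(E\setminus\bigcup_j E_j)=W$, but it does \emph{not} in general force $V_j\subset\Span(E\setminus E_j)$ for $j>1$. Already for $N=2$ with $V=k^2$, $V_1=\langle e_1\rangle$, $V_2=\langle e_2\rangle$: a complement of $V_1$ in $V$ could be $\langle e_1+e_2\rangle$, and then $\Span(E\setminus E_2)=\langle e_1+e_2\rangle$ does not contain $V_2$. The filtration only remembers the nested intersections, not the individual $V_j$; the complements must be chosen with reference to $V_j$ itself (this is precisely what the paper's enlargement step does). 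Since, as you correctly note, the only use of this lemma in the paper is with $V=k^n$ over the prime field (hence finite-dimensional), your dual argument already suffices for the application; but the infinite-dimensional sketch should either be dropped or replaced by the paper's direct enlargement argument.
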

In other words, the $V_j$'s can each be replaced by a larger subspace such that their intersection remain unchanged, so that they are now determined by the vanishing of respective sets of coordinates that are disjoint from each other. The ability to treat these disjoint coordinates separately is important in the proof of Lemma \ref{count}.

\begin{proof}
This lemma follows from Lemma \ref{extend} and Lemma \ref{equiv} ((1)$\implies$(3)) below.
\end{proof}

\begin{lemma}\label{extend}
If $\{V_j\}_{j=1}^N$ are subspaces of a $k$-vector space $V$, then there exist subspaces $\{W_j\}_{j=1}^N$ of $V$ such that $V_j\subset W_j$ and $(\bigcap_{j=1}^n W_j)+W_{n+1}=V$ for $1\le n<N$ and $\bigcap_{j=1}^N V_j=\bigcap_{j=1}^N W_j$.
\end{lemma}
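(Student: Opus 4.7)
The plan is to induct on $N$. The base case $N=1$ is trivial: take $W_1 := V_1$, so that the partial-transversality conditions are vacuous and the intersection is unchanged.

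For the inductive step, first apply the inductive hypothesis to the truncated collection $V_1, \dots, V_{N-1}$, obtaining $W_1, \dots, W_{N-1}$ with $V_j \subset W_j$, $\bigcap_{j=1}^{N-1} W_j = \bigcap_{j=1}^{N-1} V_j =: U'$, and $(\bigcap_{j=1}^n W_j) + W_{n+1} = V$ for $1 \le n < N-1$. Setting $U := \bigcap_{j=1}^N V_j$, the two remaining requirements on a prospective $W_N \supset V_N$ are $U' + W_N = V$ and $U' \cap W_N = U$, which together force $\bigcap_{j=1}^N W_j = U$.

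The crux is then a standard complement construction in a quotient space. Since $U' \cap V_N = U$, the images $\pi(U')$ and $\pi(V_N)$ under the quotient $\pi \colon V \to V/U$ intersect only at $0$. Choose any subspace $\widetilde{W}_N$ of $V/U$ maximal with respect to containing $\pi(V_N)$ and meeting $\pi(U')$ trivially; maximality forces $\widetilde{W}_N + \pi(U') = V/U$, and $W_N := \pi^{-1}(\widetilde{W}_N)$ should then satisfy all three conditions. For $W_N \supset V_N$: since $V_N \supset U$, we have $V_N = \pi^{-1}(\pi(V_N)) \subset \pi^{-1}(\widetilde{W}_N) = W_N$. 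For the two equalities: $\pi(U' + W_N) = \pi(U') + \widetilde{W}_N = V/U$ combined with $U \subset W_N$ gives $U' + W_N = V$, while $\pi(U' \cap W_N) \subset \pi(U') \cap \widetilde{W}_N = 0$ combined with the obvious inclusion $U \subset U' \cap W_N$ gives $U' \cap W_N = U$.

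I do not anticipate any real obstacle. The only subtlety is ensuring that passing to $V/U$ and back does not lose control over the two conditions on $W_N$, but this is handled by the direct verification above. The main content of the lemma is really just this one complement-existence statement, repackaged to be iterable in $N$ by treating the already-constructed partial intersection $U'$ as the single subspace to which a complement is sought.
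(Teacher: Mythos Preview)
Your proof is correct and follows essentially the same approach as the paper: induct on $N$, keep the $W_1,\dots,W_{N-1}$ from the inductive hypothesis, and build $W_N$ via a complement construction. The only cosmetic difference is that the paper takes a direct complement $C$ of $(\bigcap_{j<N} W_j)+V_N$ in $V$ and sets $W_N=V_N+C$, verifying $U'\cap W_N=U$ via the modular-law identity $A\cap(B+C)=A\cap B$ when $(A+B)\cap C=0$, whereas you pass to $V/U$ and use a maximality argument; the content is the same.
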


This lemma fails if $V_j$ are finite abelian groups instead of vector spaces, which is the main reason why we cannot extend Lemma \ref{count} to the situation of an abelian group variety acting on another variety, the original situation being a vector space acting simply transitively on the affine space. 

\begin{proof}
We proceed by induction. If $N=0$, there is nothing to prove (the empty intersection is always $V$). If $N>0$, given $\{V_j\}_{j=1}^N$, apply the induction hypothesis to $\{V_j\}_{j=1}^{N-1}$ to get $\{W_j\}_{j=1}^{N-1}$. Let $U$ be a complement of $(\bigcap_{j=1}^{N-1} W_j)+V_N$ in $V$, and let $W_N:=V_N+U\supset V_N$, then clearly $(\bigcap_{j=1}^{N-1} W_j)+W_N=V$. If $A,B,C\subset V$ are subspaces satisfying $(A+B)\cap C=\{0\}$, it is easy to show that $A\cap(B+C)=A\cap B$. Taking $A=\bigcap_{j=1}^{N-1} W_j$, $B=V_N$ and $C=U$, we see that \[\textstyle\bigcap_{j=1}^N W_j=A\cap(B+C)=A\cap B=(\bigcap_{j=1}^{N-1} W_j)\cap V_N=(\bigcap_{j=1}^{N-1} V_j)\cap V_N=\bigcap_{j=1}^N V_j.\]
\end{proof}

\begin{lemma}\label{equiv}
If $\{W_j\}_{j=1}^N$ are subspaces of a $k$-vector space $V$, the following are equivalent:
\begin{enumerate}[label={(\arabic*)}]
\item $(\bigcap_{j=1}^n W_j)+W_{n+1}=V$ for $1\le n<N$;
\item The natural injective linear map $V/\bigcap_{j=1}^N W_j\to\bigoplus_{j=1}^N V/W_j$ is an isomorphism;
\item There exists a basis $E$ of $V$ and pairwise disjoint subsets $\{E_j\}_{j=1}^N$ of $E$ such that $W_j=\Span(E\setminus E_j)$ for $1\le j\le N$;
\item There exist linearly independent subspaces $\{U_j\}_{j=1}^N$ of $V$ such that $V=(\bigcap_{i=1}^N W_i)\oplus\bigoplus_{i=1}^N U_i$ and $W_j=(\bigcap_{i=1}^N W_i)\oplus\bigoplus_{1\le i\le N, i\neq j}U_i$ for $1\le j\le N$;
\item $(\bigcap_{1\le j\le N, j\neq n}W_j)+W_n=V$ for $1\le n\le N$;
\item In the dual space $V^*$, the subspaces $\{W_j^\perp\}_{j=1}^N$ are linearly independent.
\end{enumerate}
If $\codim W_j<\infty$ for all $1\le j\le N$, they are also equivalent to:
\begin{enumerate}[resume,label={(\arabic*)}]
\item  $\codim(\bigcap_{j=1}^N W_j)=\sum_{j=1}^N\codim W_j$.
\end{enumerate}
\end{lemma}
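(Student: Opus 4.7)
The plan is to make (2) the central hub and establish the equivalences $(1), (3), (4), (5) \Leftrightarrow (2)$ directly, then attach $(2) \Leftrightarrow (6)$ by vector-space duality and $(2) \Leftrightarrow (7)$ by a dimension count under the finite-codimension hypothesis. Note that (2) has a convenient reformulation: since the natural map $V/\bigcap_j W_j \to \bigoplus_j V/W_j$ is automatically injective (by definition of the intersection), (2) is equivalent to surjectivity of $V \to \bigoplus_j V/W_j$, a Chinese-remainder-type statement which will be the workhorse.

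For $(1) \Rightarrow (2)$ I would induct on $N$: given a tuple $(v_j + W_j)_{j=1}^N$, inductively produce $v' \in V$ matching the first $N-1$ coordinates, then use (1) at $n = N-1$ to write $v_N - v' = a + b$ with $a \in \bigcap_{j<N} W_j$ and $b \in W_N$, so that $v := v' + a$ realizes the required preimage. Conversely, $(2) \Rightarrow (1)$ and $(2) \Rightarrow (5)$ run in parallel: for any $v_0 \in V$ and any designated index $n$, apply surjectivity of $V \to \bigoplus_j V/W_j$ to the tuple that is $0$ in slot $n$ and $v_0 + W_j$ elsewhere; if $v$ is a preimage then $v_0 = (v_0 - v) + v$ exhibits the required decomposition with $v_0 - v \in \bigcap_{j \ne n} W_j$ and $v \in W_n$. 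Finally $(5) \Rightarrow (1)$ is immediate because $\bigcap_{j \ne n+1} W_j \subseteq \bigcap_{j \le n} W_j$.

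The equivalence $(3) \Leftrightarrow (4)$ is mechanical: from (3) set $U_j := \Span(E_j)$; from (4) glue bases of the $U_j$'s to a basis of $\bigcap_i W_i$. The implication $(4) \Rightarrow (2)$ follows by observing that the decomposition in (4) restricts to $W_j = \bigcap_i W_i \oplus \bigoplus_{i \ne j} U_i$, so the natural map in (2) becomes the identity on $\bigoplus_j U_j$. For $(2) \Rightarrow (4)$, under the isomorphism $V/\bigcap_i W_i \cong \bigoplus_j V/W_j$ the $j$-th summand pulls back to the image of $\bigcap_{i \ne j} W_i$; I would choose $U_j$ to be any complement of $\bigcap_i W_i$ inside $\bigcap_{i \ne j} W_i$ and verify by a direct computation that $U_j \cap W_j = 0$ and $W_j = \bigcap_i W_i \oplus \bigoplus_{i \ne j} U_i$.

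For $(2) \Leftrightarrow (6)$, I would dualize the natural map: its transpose is $\bigoplus_j W_j^\perp \to V^*$, $(f_j) \mapsto \sum_j f_j$, and since vector-space duality is exact, surjectivity of $V \to \bigoplus_j V/W_j$ is equivalent to injectivity of this transpose, which is exactly linear independence of $\{W_j^\perp\}$. For $(2) \Leftrightarrow (7)$ under the finite-codimension hypothesis, the automatic injection $V/\bigcap_j W_j \hookrightarrow \bigoplus_j V/W_j$ is a bijection of finite-dimensional spaces iff their dimensions agree, i.e.\ iff $\codim(\bigcap_j W_j) = \sum_j \codim W_j$. No single implication is deep; the main challenge is organizing the seven-way equivalence without redundancy, which is why the hub structure around (2) is preferable to a cycle.
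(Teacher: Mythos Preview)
Your proof is correct and covers all implications. The organization differs from the paper's: you make (2) a hub and prove each of (1), (3), (4), (5) equivalent to it directly, whereas the paper runs a cycle $(1)\Rightarrow(2)\Rightarrow(3)\Rightarrow(4)\Rightarrow(5)\Rightarrow(1)$ and then attaches $(2)\Leftrightarrow(6)$ and $(2)\Leftrightarrow(7)$ separately. In practice the individual arguments largely coincide: your $(1)\Rightarrow(2)$ induction, your $(5)\Rightarrow(1)$ via containment of intersections, your $(3)\Leftrightarrow(4)$ by setting $U_j=\Span(E_j)$, and your duality/dimension arguments for (6) and (7) all match the paper's. The one place the two routes diverge is in reaching (3)/(4) from (2): the paper goes $(2)\Rightarrow(3)$ by lifting bases of the $V/W_j$ through the isomorphism and then reads off $(3)\Rightarrow(4)$, while you go $(2)\Rightarrow(4)$ by taking $U_j$ to be a complement of $\bigcap_i W_i$ inside $\bigcap_{i\ne j} W_i$ and then obtain (3) from (4). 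Your hub structure is slightly more economical in that it avoids ever needing the implication $(4)\Rightarrow(5)$ as a separate step; the paper's cycle is slightly more economical in that it never proves $(2)\Rightarrow(1)$ or $(2)\Rightarrow(5)$ directly. Either organization is fine for a lemma of this kind.
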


\begin{remark}
If $\{W_j\}_{j=1}^N$ satisfy the equivalent conditions listed in this lemma, they are called mutually transverse. The Chinese Remainder Theorem says that comaximal ideals in a $k$-algebra are mutually transverse. Condition (6) shows that mutual transversality 
is a notion dual to linear independence.
In fact, one way to prove the equivalence is passing to the dual space using the identifications $(\bigcap_{j=1}^n W_j)^\perp=\sum_{j=1}^n W_j^\perp$, $(W+W')^\perp=W^\perp\cap W'^\perp$ and $(V/W)^*=W^\perp$, and then taking advantage of the familiar equivalent characterizations of linear independence.

Although $V$ is finite-dimensional in our intended application, the proof works for any $V$. If we consider infinitely many subspaces, the obvious generalizations of the conditions in the lemma are no longer equivalent.
\end{remark}

\begin{proof}
(1)$\implies$(2): If $A,B\subset V$ are subspaces, then the natural injective linear map $V/(A\cap B)\to V/A\oplus V/B$ is an isomorphism iff $V=A+B$. Thus if (1) holds, then (2) can be obtained by induction.
    
(2)$\implies$(3): Assume (2). For $1\le j\le N$, let $E'_j$ be the image of a basis of $V/W_j$ under the map $V/W_j\to\bigoplus_{j=1}^N V/W_j\cong V/\bigcap_{j=1}^N W_j$, then $\coprod_{j=1}^N E'_j$ is a basis of $V/\bigcap_{j=1}^N W_j$. Let $E_j$ be a lift of $E'_j$ to $V$, and let $E_0$ be a basis of $\bigcap_{j=1}^N W_j$, then $E:=\coprod_{j=0}^N E_j$ is a basis of $V$. By definition of $E_j$, the image of $E_j$ in $V/W_n$ is $\{0\}$ (i.e. $E_j\subset W_n$) if $n\neq j$, so $E\setminus E_n=\bigcup_{n\neq j}E_n\subset W_n$. Since $E_n$ is a basis both for $V/\Span(E\setminus E_n)$ (since $E$ is a basis of $V$) and for $V/W_n$ (by definition of $E_j$), we conclude that $W_n=\Span(E\setminus E_n)$.

(3)$\implies$(4): Take $U_j=\Span(E_j)$.

(4)$\implies$(5): Assume (4). Then $\bigcap_{j\neq n}W_j=(\bigcap_{i=1}^N W_j)\oplus U_n$, so \[\textstyle(\bigcap_{j\neq n}W_j)+W_n=(\bigcap_{i=1}^N W_i)\oplus U_n+\bigoplus_{i\neq n}U_i=V.\]

(5)$\implies$(1): Notice that $\bigcap_{j=1}^n W_j\supset
\bigcap_{1\le j\le N, j\neq n+1}W_j$ for $1\le n<N$.

(2)$\iff$(6): The dual of the injective linear map $V/\bigcap_{j=1}^N W_j\to\bigoplus_{j=1}^N V/W_j$ is canonically identified with the natural surjective map $\bigoplus_{j=1}^N W_j^\perp\to\sum_{j=1}^N W_j^\perp$.

(2)$\iff$(7): Clear.
\end{proof}

\subsection{Bound on the number of perfect powers in certain offset families of rational functions}


\begin{lemma}\label{reduced}
Let $k$ be a field and $k^s$ its separable algebraic closure, so  $k^s=\ov{k}$.
\begin{enumerate}[label={(\arabic*)}]
\item If $A$ is a reduced $k$-algebra, then $A\otimes_k k^s$ is reduced.
\item If $F\in k[x_1,\dots,x_n]$ is irreducible, then $F$ is square-free as a polynomial in $k^s[x_1,\dots,x_n]$.
\item If $F,G\in k[x_1,\dots,x_n]$ are non-associate irreducible polynomials, then $F$ and $G$ have no common factors in $k^s[x_1,\dots,x_n]$.
\end{enumerate}
\end{lemma}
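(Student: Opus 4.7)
The plan is to treat (1) as the main technical statement, from which (2) and (3) follow formally. For (1) I would prove the slightly stronger assertion that if $A$ is a reduced $k$-algebra and $L/k$ is any separable algebraic extension, then $A\otimes_k L$ is reduced; applying this to $L=k^s$ gives (1). The first reduction writes $L=\varinjlim L_i$ as a filtered colimit of finite separable subextensions of $k$; since tensor product commutes with filtered colimits, any nilpotent in $A\otimes_k L$ already lies in some $A\otimes_k L_i$, so it suffices to treat the case $L_i/k$ finite separable.

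The second reduction uses that a reduced ring $A$ embeds in $\prod_{\mathfrak{p}}\operatorname{Frac}(A/\mathfrak{p})$, the product over (minimal) primes. Because $L_i$ is a finite free $k$-module, the functor $-\otimes_k L_i$ commutes with arbitrary products and preserves injections, so $A\otimes_k L_i$ embeds in $\prod_{\mathfrak{p}}(\operatorname{Frac}(A/\mathfrak{p})\otimes_k L_i)$. It therefore suffices to show $K\otimes_k L_i$ is reduced whenever $K/k$ is a field extension. Writing $L_i=k[t]/(f(t))$ for a separable irreducible $f\in k[t]$, we have $K\otimes_k L_i\cong K[t]/(f(t))$; since $f$ has pairwise distinct roots in $\overline{k}$ (hence in $\overline{K}$), it factors over $K$ into pairwise distinct, separable irreducible factors, and consequently $K[t]/(f(t))$ is a finite product of finite separable field extensions of $K$, in particular reduced.

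Part (2) is then immediate: for irreducible $F\in k[x_1,\dots,x_n]$, the quotient $k[x_1,\dots,x_n]/(F)$ is a domain, hence reduced, and (1) promotes this to reducedness of $k^s[x_1,\dots,x_n]/(F)$, forcing $F$ to be squarefree over $k^s$. For (3), since $k[x_1,\dots,x_n]$ is a UFD and $F,G$ are non-associate irreducibles, one has $(FG)=(F)\cap(G)$, a radical ideal, so $k[x_1,\dots,x_n]/(FG)$ is reduced; (1) then makes $k^s[x_1,\dots,x_n]/(FG)$ reduced, so $FG$ is squarefree over $k^s$, and combining with (2) applied to $F$ and to $G$ individually shows that $F$ and $G$ share no irreducible factor over $k^s$.

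The only substantive part of the argument is (1), and within it the single place where separability is actually used is the fact that a separable polynomial $f\in k[t]$ remains separable over any overfield $K$; this is precisely what prevents $K[t]/(f)$ from acquiring nilpotents. For a purely inseparable extension (such as $k^{1/p}/k$ in characteristic $p$) the analogous statement fails, which is why the hypothesis must be $L=k^s$ rather than an arbitrary algebraic extension — under the lemma's standing convention $k^s=\overline{k}$ this distinction collapses, but the proof above still needs the separability hypothesis in the step where $f$ is factored.
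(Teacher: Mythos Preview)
Your argument is correct and follows essentially the same route as the paper: parts (2) and (3) are deduced from (1) exactly as the paper does, by observing that $k[x_1,\dots,x_n]/(F)$ and $k[x_1,\dots,x_n]/(FG)$ are reduced and then tensoring up. The only difference is that where the paper simply cites the Stacks Project for (1), you supply a self-contained proof via the standard reductions (filtered colimit to finite separable $L_i$, embedding $A$ into a product of fields, and the primitive element computation $K\otimes_k L_i\cong K[t]/(f)$); this is correct and is in fact how the cited Stacks tag is proved.
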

\begin{proof}
(1) This follows from \cite[Tag 030U]{Stacks}.

(2) If $F$ is irreducible over $k$, then $k[x_1,\dots,x_n]/(F)$ is reduced, so by (1), $k^s[x_1,\dots,x_n]/(F)$ is reduced, so $F$ is square-free over $k^s$.

(3) If $F,G$ are irreducible over $k$ and non-associate, then $k[x_1,\dots,x_n]/(FG)$ is reduced, so by (1), $k^s[x_1,\dots,x_n]/(FG)$ is reduced, so $F$ and $G$ have no common factors over $k^s$.

\end{proof}

\begin{lemma}\label{count}
Let $\kappa$ be a finite field and $\kappa_0$ its prime field.
Let $F\in\kappa(x_1,\dots,x_n)$ be $d$th-power-free, let $T=T_F:=\{m\in\ov{\kappa}^n\mid F(x)\equiv F(x+m)\}$ be the $\kappa_0$-subspace of $\ov{\kappa}^n$ of translations that leave $F$ invariant, and assume that $\#T<\infty$. For any finite extension $k/\kappa$, $r\in\N$ and $\{a_i\}_{i=1}^r\subset\Z$ such that $\gcd(d,a_i)=1$, let
$P$ be the collection of tuples $(m\up1,\dots,m\up r)\in k^{nr}$ such that the rational function $\prod_{i=1}^r F(x+m\up i)^{a_i}$ is a perfect $d$th power over $\ov{\kappa}$. 
Then $\#P\le C(\#k)^{n\floor{r/2}}(\#T)^{\ceil{r/2}}$, where the constant $C$ only depends on $r$ and the degree of $F$ and not on $k$.
\end{lemma}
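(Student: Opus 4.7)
My plan is to decompose $F$ into irreducible factors over $\overline{\kappa}$, group them by translation-equivalence, and exploit Lemma \ref{vec sp} to ``diagonalize'' the constraint system so that the counting splits as a product of easy per-class contributions.

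By Lemma \ref{reduced} I would factor $F = c\prod_{j=1}^J G_j^{e_j}$ with $G_j \in \overline{\kappa}[x_1,\dots,x_n]$ pairwise non-associate irreducibles and $0 < |e_j| < d$. The $G_j$'s partition into translation-equivalence classes $\alpha = 1,\dots,A$ (two $G_j$'s being equivalent iff some translate of one is associate to the other), each carrying a common translation stabilizer $T_\alpha \subseteq \overline{\kappa}^n$. A direct computation gives $\bigcap_\alpha T_\alpha \subseteq T_F$, so in particular $\#\bigcap_\alpha (T_\alpha \cap k^n) \le \#T$. Since distinct translation classes involve non-associate irreducibles that cannot cancel each other, the condition that $\prod_i F(x+m^{(i)})^{a_i}$ be a perfect $d$th power splits into per-class conditions: for each $\alpha$, every associate-equivalence class $C$ in the multi-set $\{G_j(x+m^{(i)}) : j\in \alpha,\, 1\le i\le r\}$ must satisfy $\sum_{(i,j)\in C} a_i e_j \equiv 0 \pmod d$. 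Combining $\gcd(a_i,d)=1$, $|e_j|<d$, and the fact that distinct $G_j$'s within one class are pairwise non-associate (so the same $i$ cannot contribute two members to a single $C$, ruling out within-$i$ cancellation), every such $C$ must contain at least two pairs with distinct $i$-values. Consequently the graph $\Gamma_\alpha$ on $\{1,\dots,r\}$---with $\{i,i'\}$ an edge iff $G_j(x+m^{(i)}) \sim G_{j'}(x+m^{(i')})$ for some $j,j' \in \alpha$---has \emph{no isolated vertex}, hence at most $\lfloor r/2 \rfloor$ connected components. Each edge imposes a coset constraint $m^{(i)} - m^{(i')} \in t_{j,j'} + T_\alpha$ for some $t_{j,j'}\in\overline{\kappa}^n$, with only finitely many such $t_{j,j'}$ arising for fixed $F$.

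Now I would apply Lemma \ref{vec sp} to the $\kappa_0$-subspaces $V_\alpha := T_\alpha \cap k^n$ of $k^n$, producing a $\kappa_0$-basis $E = E_0 \sqcup E_1 \sqcup \dots \sqcup E_A$ of $k^n$ with $V_\alpha \subseteq W_\alpha := \Span_{\kappa_0}(E\setminus E_\alpha)$ and $\Span_{\kappa_0}(E_0) = \bigcap_\alpha V_\alpha$; in particular $\#\Span(E_0) \le \#T$. Writing $m^{(i)} = m^{(i)}_{E_0} + \sum_\alpha m^{(i)}_{E_\alpha}$ relative to the direct sum $k^n = \Span(E_0) \oplus \bigoplus_\alpha \Span(E_\alpha)$, the containment $V_\alpha \subseteq W_\alpha$ ensures that each exact color-$\alpha$ edge constraint projects to the linear equation $m^{(i)}_{E_\alpha} - m^{(i')}_{E_\alpha} = (t_{j,j'})_{E_\alpha}$ on the $E_\alpha$-components, while its implication on the remaining components (including $E_0$) may be safely relaxed to the trivial one for purposes of an upper bound. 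Summing over the finitely many possible graphs $(\Gamma_\alpha)$ and coset labels (bounded in terms of $r$, $d$, and $\deg F$), and noting that the projected system leaves at most $c_\alpha \le \lfloor r/2 \rfloor$ free representatives in $\Span(E_\alpha)$ per color while the $E_0$-components stay unconstrained, one obtains, using $\prod_\alpha \#\Span(E_\alpha) = (\#k)^n/\#\Span(E_0)$,
\[
\#P \;\le\; C\cdot (\#\Span(E_0))^r \cdot \prod_\alpha (\#\Span(E_\alpha))^{\lfloor r/2\rfloor}
\;=\; C\cdot (\#\Span(E_0))^{\lceil r/2\rceil}(\#k)^{n\lfloor r/2\rfloor}
\;\le\; C\,(\#T)^{\lceil r/2\rceil}(\#k)^{n\lfloor r/2\rfloor}.
\]

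The main obstacle is to verify carefully that the ``no isolated vertex'' claim holds at the \emph{exact} level (using pairwise non-associateness of the $G_j$'s within one class, so that distinct $j$'s at a single $i$ cannot cancel each other), rather than only at the relaxed $W_\alpha$-level where a self-cancellation within a fixed $i$ can be engineered and would destroy the bound. The concluding arithmetic identity $(\#\Span(E_0))^{r-\lfloor r/2\rfloor} = (\#\Span(E_0))^{\lceil r/2\rceil}$, which converts the ``free'' $E_0$-dimension appearing to the $r$th power into the correct exponent $\lceil r/2\rceil$ of $\#T$, is precisely what the mutual-transversality output of Lemma \ref{vec sp} is designed to deliver.
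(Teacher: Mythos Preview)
Your proposal is correct and follows essentially the same route as the paper's proof: factor $F$ over $\overline{\kappa}$, deduce from the $d$th-power condition that every vertex $i$ must be linked to some $i'\neq i$ via a translate-match of irreducible factors (so the graph has $\le\lfloor r/2\rfloor$ components), apply Lemma~\ref{vec sp} to the stabilizer subspaces inside $k^n$, and count coordinatewise. The only organizational difference is that you first group the irreducible factors into translation-equivalence classes $\alpha$ and work with one graph $\Gamma_\alpha$ and one stabilizer $T_\alpha$ per class, whereas the paper indexes by individual factors $f_j$; since factors in the same class have identical stabilizers and yield identical graphs $G_{m,j}$, the two indexings are equivalent (your version merely eliminates redundant copies). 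One minor slip: your constant $C$ should not depend on $d$---the edge labels $t_{j,j'}$ are determined by $F$ alone, so $C=C(r,\deg F)$ as stated in the lemma.
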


\begin{remark}
We will not try to optimize the constant $C$. Notice that if $d\nmid(\sum a_i)(\sum b_j)$ (with $b_j$'s introduced in the proof below), then $\prod_{i=1}^r F(x+m\up i)$ is never a perfect $d$th power. However, in the case we are interested in (in the corollary that follows), $a_i=\pm1$, and $\sum a_i=0$.
\end{remark}

\begin{proof}
By Lemma \ref{reduced}, an irreducible polynomial in $\kappa[x_1,\dots,x_n]$ remains square-free 
over $\ov{\kappa}=\kappa^s$, and that different irreducible polynomials remain relatively prime over $\ov{\kappa}$. 
Since $F\in\kappa(x_1,\dots,x_n)$ is $d$th-power-free, if $F=\prod_{j=1}^N f_j^{b_j}$ is the factorization of $F$ into irreducible factors over $\ov{\kappa}$, we still have $0<\abs{b_j}<d$, and in particular $d\nmid b_j$. For every $i,j$, the irreducible factor $f_j(x+m\up i)$ appears in $F(x+m\up i)^{a_i}$ with multiplicity $a_i b_j$. Since $\gcd(d,a_i)=1$ and $d\nmid b_j$, we have $d\nmid a_i b_j$. Thus, in order for $\prod_{i=1}^r F(x+m\up i)^{a_i}$ to be a perfect $d$th power, $f_j(x+m\up i)$ must also appear in $F(x+m\up{i'})$ for some $i'\neq i$, so $cf_j(x+m\up i)\equiv f_{j'}(x+m\up{i'})$ (i.e. $f_{j'}(x)\equiv f_j(x+m\up i-m\up{i'})$) for some $1\le j'\le N$ and $c\in\ov{\kappa}^\times$.

Now, for each $j$ and each tuple $m=(m\up1,\dots,m\up r)\in k^{nr}$, define an undirected graph $G_{m,j}$ with vertex set $\{1,\dots,r\}$ such that there is an edge between $i$ and $i'$ iff $f_{j'}(x)\equiv cf_j(x+m\up i-m\up{i'})$ or $cf_j(x+m\up {i'}-m\up i)$ for some $j'$ and $c$. If $m\in P$, then $G_{m,j}$ has no isolated point by the last paragraph, and it is then easy to see that it has at most $\floor{r/2}$ components. Clearly, the number of undirected graphs on $\{1,\dots,r\}$ is $2^{{r+1\choose 2}}$. Given $N$ such graphs $G_1,\dots, G_N$, we want to bound the number of tuples $m\in P$ such that $G_{m,j}=G_j$ for all $1\le j\le N$.

Let $V_j$ be the $\kappa_0$-subspace of $V:=k^n$ of translations that leave $f_j$ invariant, then $\bigcap_{j=1}^N V_j\subset T$. Choose a basis $E$ of $V$ and subsets $\{E_j\}_{j=1}^N$ as in Lemma \ref{vec sp}, so that $V_j\subset\Span(E\setminus E_j)$ for $1\le j\le N$ and $\bigcap_{j=1}^N V_j=\Span(E\setminus\bigcup_{j=1}^N E_j)$, hence $\sum_{j=1}^N\#E_j=\dim V-\dim\bigcap_{j=1}^N V_j\ge\dim V-\dim T$. An edge connecting $i$ and $i'$ in $G_{m,j}$ poses a constraint between the $E_j$-coordinates of $m\up i$ and $m\up {i'}$ under this basis; more precisely, for each $1\le j'\le N$ there are at most two possibilities for the $E_j$-coordinates of $m\up i-m\up {i'}$. Indeed, if $f_{j'}(x)\equiv cf_j(x\pm(m\up i-m\up {i'}))$ and $f_{j'}(x)\equiv c'f_j(x\pm({m'}\up i-{m'}\up {i'}))$, then $f_j(x)\equiv (c'/c)f_j(x\pm({m'}\up i-{m'}\up {i'})\mp(m\up i-{m'}\up i))$, so $c'=c$ and $({m'}\up i-{m'}\up {i'})\pm(m\up i-{m'}\up i)$ leaves $f_j$ invariant, hence it lies in $V_j\subset\Span(E\setminus E_j)$, so the $E_j$-coordinates of ${m'}\up i-{m'}\up {i'}$ are $\pm$ those of $m\up i-{m'}\up i$.

By induction, if $i$ and $i'$ lie in the same component of $G_{m,j}$, say with distance $D$, then there are at most $(2N)^D\le(2N)^r$ possibilities for the $E_j$-coordinates of $m\up i-m\up {i'}$.
Therefore, if $H\subset G_{m,j}$ is a connected component, there are at most $(\#\kappa_0)^{\#E_j}(2N)^{r(\#H-1)}$ possibilities for the $E_j$-coordinates of the $m\up i$'s with $i\in H$. If $m\in P$, $G_{m,j}$ has at most $\floor{r/2}$ components, so there are at most $(\#\kappa_0)^{\#E_j\floor{r/2}}(2N)^{r^2}$ possibilities for all the $E_j$-coordinates of $m$, and hence at most \begin{align*}
    (\#\kappa_0)^{\sum_{j=1}^N \#E_j\floor{r/2}}(2N)^{r^2 N}
    &\le (\#\kappa_0)^{(\dim V-\dim T)\floor{r/2}}(2N)^{r^2 N}\\
    &=(2N)^{r^2 N}((\#k)^n/\#T)^{\floor{r/2}}
\end{align*}possibilities for the $\bigcup_{j=1}^N E_j$-coordinates. The possibilities for the $E\setminus\bigcup_{j=1}^N E_j$-coordinates amount to $(\#T)^r$. Therefore, if we take $C=2^{{r+1\choose 2}\deg F}(2\deg F)^{r^2\deg F}$, then $\#P\le C(\#k)^{n\floor{r/2}}(\#T)^{\ceil{r/2}}$, since $N\le\deg F$.
\end{proof}

\begin{corollary} \label{power count}
Fix a $d$th-power-free rational function $F\in\kappa(x_1,\dots,x_n)$ satisfying $\#T_F<\infty$,
and fix $r\in\N$. For each finite extension $k/\kappa$, let $P_k$ be the collection of tuples $(m\up 1,\dots,m\up {2r})\in k^{n2r}$ such that $\prod_{i=1}^r F(x+m\up i)\prod_{i=r+1}^{2r} F(x+m\up i)^{-1}$ is a perfect $d$th power over $\ov{\kappa}$. Then $\#P_k=O((\#k)^{nr})$ as $k$ varies.
\end{corollary}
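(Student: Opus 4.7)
The plan is to apply Lemma \ref{count} directly, with its parameter $r$ replaced by $2r$, to the specific choice of exponents $a_i=1$ for $1\le i\le r$ and $a_i=-1$ for $r+1\le i\le 2r$. These exponents are coprime to $d$ (since $\gcd(d,\pm 1)=1$), so the hypothesis of Lemma \ref{count} is satisfied, and the product appearing in the definition of $P_k$ is literally $\prod_{i=1}^{2r} F(x+m\up i)^{a_i}$.

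Lemma \ref{count} then yields the bound
\[ \#P_k \;\le\; C (\#k)^{n\floor{2r/2}} (\#T_F)^{\ceil{2r/2}} \;=\; C(\#T_F)^r(\#k)^{nr}, \]
where $C$ depends only on $r$ and $\deg F$. Since $F$ and $T_F$ are fixed and $\#T_F<\infty$ by hypothesis, the factor $C(\#T_F)^r$ is a constant independent of the finite extension $k/\kappa$. Hence $\#P_k = O((\#k)^{nr})$ as $k$ varies over finite extensions of $\kappa$, which is the asserted bound.

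There is no real obstacle here: the corollary is a pure specialization of Lemma \ref{count}. The only small thing to verify is the arithmetic of the exponents in the bound ($\floor{2r/2}=r=\ceil{2r/2}$), which is trivial. All the nontrivial content — the graph-theoretic analysis of which translates can make irreducible factors coincide, the use of $\#T_F$ to control the ``internal symmetry'' of $F$, and the linear-algebra decomposition from Lemma \ref{vec sp} — has already been handled in the proof of Lemma \ref{count}.
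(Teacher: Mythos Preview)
Your proof is correct and is exactly the intended argument: the paper states this result as an immediate corollary of Lemma \ref{count} without even writing out a separate proof, and your specialization (replace $r$ by $2r$, take $a_i=\pm1$, and absorb the constant $(\#T_F)^r$ into the implied constant) is precisely how it follows.
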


\begin{remark}
In this case, the exponent is sharp: for any bijection $\varphi:\{1,\dots,r\}\to\{r+1,\dots,2r\}$, if $m\up {r+i}=m\up {\varphi(i)}$ for $1\le i\le r$, then $\prod_{i=1}^r F(x+m\up i)\prod_{i=r+1}^{2r} F(x+m\up i)^{-1}=1$. The number of such tuples $(m\up 1,\dots,m\up {2r})$ is asymptotic to $r!\,(\#k)^{nr}$ as $\#k\to\infty$.
\end{remark}

\subsection{Reductions of a polynomial with integer coefficients}
\begin{lemma}\label{equiv reduction}
Let $F\in\Z[x_1,\dots,x_n]$ be a polynomial, and let $x$ be the row vector $(x_1,\dots,x_n)$ of indeterminates. Then the following are equivalent:
\begin{enumerate}[label={(\arabic*)}]
\item $F$ is invariant under some nontrivial translation in $\ov{\Q}^n$, i.e. there exists $0\neq m\in\ov{\Q}^n$ such that $F(x)\equiv F(x+m)$;
\item $F$ is invariant under some nontrivial translation in $\Z^n$;
\item $F$ can be made independent of one of the indeterminates by a linear change of coordinates, i.e. there exists $A\in\GL(n,\Z)$ such that $F(xA)\in\Z[x_2,\dots,x_n]$;
\item When viewed as a morphism $\A^n_\Z\to\A^1_\Z$, $F$ factors through a linear map $\A^n_\Z\to\A^{n-1}_\Z$, i.e. there exists a integral $n\times(n-1)$ matrix $B$ and $f\in\Z[x_2,\dots,x_n]$ such that $F(x)\equiv f(xB)$;
\item For almost all prime numbers $p$, the reduction of $F$ modulo $p$ is invariant under some nontrivial translation in $\ov{\F_p}^n$.
\item For infinitely many prime numbers $p$, the reduction of $F$ modulo $p$ is invariant under some nontrivial translation in $\ov{\F_p}^n$.
\end{enumerate}
\end{lemma}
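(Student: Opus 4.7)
My plan is to prove the equivalence through two loops, $(1)\Rightarrow(3)\Rightarrow(4)\Rightarrow(2)\Rightarrow(1)$ and $(2)\Rightarrow(5)\Rightarrow(6)\Rightarrow(1)$, in which $(2)\Rightarrow(1)$ and $(5)\Rightarrow(6)$ are tautological. The structural fact underpinning the entire argument is that
\[T:=\{m\in\ov{\Q}^n:F(x+m)\equiv F(x)\}\]
is a $\Q$-linear subspace of $\ov{\Q}^n$. It is Zariski-closed and defined over $\Q$ because it is cut out by the coefficients of $F(x+m)-F(x)\in\Z[x,m]$ viewed as polynomials in $m$; it is closed under addition because composing two invariant translations is again invariant; and in characteristic zero the Zariski closure of $\Z\cdot m$ in $\A^n$ is the entire line $\ov{\Q}\cdot m$, upgrading closure under $\Z$-multiples to full closure under scalar multiplication.

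For the first loop, given a nontrivial $T$, its $\Q$-defined nature ensures that $T\cap\Q^n$ is a nontrivial $\Q$-subspace; scaling a nonzero rational vector by an appropriate positive integer and dividing by the gcd of its coordinates yields a primitive integer vector $v\in T\cap\Z^n$. I would extend $v$ to a $\Z$-basis $v=v_1,\dots,v_n$ of $\Z^n$ and take $A\in\GL(n,\Z)$ to have these vectors as rows; then $F(xA)$ is invariant under $x_1\mapsto x_1+1$ (since $xA\mapsto xA+v$), and in characteristic zero this forces $F(xA)\in\Z[x_2,\dots,x_n]$, giving (3). Next, $(3)\Rightarrow(4)$ is direct: letting $B$ be the $n\times(n-1)$ matrix whose columns are the last $n-1$ columns of $A^{-1}\in\GL(n,\Z)$, one has $F(x)=f(xB)$. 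For $(4)\Rightarrow(2)$, the row-multiplication map $\Z^n\to\Z^{n-1}$, $m\mapsto mB$, has nontrivial kernel by rank-nullity, and any nonzero integer vector in the kernel is an invariant translation.

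For the second loop, $(2)\Rightarrow(5)$ is immediate: a nonzero $m\in\Z^n$ leaving $F$ invariant reduces to a nonzero $\bar m\in\F_p^n$ leaving $\bar F$ invariant, for every prime $p$ not dividing $\gcd(m_1,\dots,m_n)$. The substantive step is $(6)\Rightarrow(1)$, which I would establish by contraposition using spreading out. Let $I\subset\Z[m_1,\dots,m_n]$ be the ideal generated by the coefficients (with respect to $x$) of $F(x+m)-F(x)$, and set $\mathcal{T}:=\Spec(\Z[m]/I)\subset\A^n_\Z$; the fiber of $\mathcal{T}$ over each prime $p$ is the analogous scheme for $\bar F$, so $\mathcal{T}(\ov{\F_p})$ is exactly the set of translations leaving $\bar F$ invariant. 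Assuming $\mathcal{T}(\ov{\Q})=\{0\}$, we have $\sqrt{I\cdot\Q[m]}=(m_1,\dots,m_n)$, so there exist exponents $k_i\ge1$ and an integer $N\ge1$ with $Nm_i^{k_i}\in I$ for each $i$. For any prime $p\nmid N$ and any $m\in\mathcal{T}(\ov{\F_p})$, we get $m_i^{k_i}=0$ and hence $m_i=0$; thus $\mathcal{T}(\ov{\F_p})=\{0\}$ for all but finitely many $p$, contradicting (6).

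The main obstacle I anticipate is the structural claim that $T$ is a $\Q$-linear subspace, which is exactly where being a polynomial of finite degree and having characteristic zero genuinely enter. With this in hand, the integral-basis manipulation in the first loop and the spreading-out argument for $(6)\Rightarrow(1)$ both proceed routinely.
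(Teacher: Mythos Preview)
Your proposal is correct and follows the same overall architecture as the paper's proof --- cycle through the implications using the additive/subgroup structure of the translation set $T$, extend a primitive integer vector to a $\Z$-basis for $(2)\Leftrightarrow(3)$, and spread out for $(6)\Rightarrow(1)$ --- but two of your steps use genuinely different tools than the paper. For $(1)\Rightarrow(2)$ the paper normalises one coordinate of $m$ to $1$ and then applies the trace $\Tr_{E/\Q}$ to force $m$ into $\Q^n$; you instead package $T$ as a $\ov{\Q}$-linear subspace defined over $\Q$ and invoke Galois descent to produce a rational point. For $(6)\Rightarrow(1)$ the paper applies Chevalley's constructibility theorem to the scheme $\{m\neq 0,\,F(x+m)\equiv F(x)\}\subset\A^n_\Z$ over $\Spec\Z$; your argument via the Nullstellensatz and clearing denominators ($Nm_i^{k_i}\in I$) is more elementary and gives an explicit finite set of bad primes, at the cost of being specific to the zero-dimensional situation. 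Both pairs of arguments are standard and closely related; neither approach offers a decisive advantage here.
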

\begin{remark}
If the conditions are violated, (3) or (4) shows that we can reduce to a lower dimension. In fact, if we start with a homogeneous polynomial $F$ we can reduce to a homogeneous polynomial in lower dimension.
The lemma can be shown to hold for $F\in\Q(x_1,\dots,x_n)$ as well. The implication (2)$\implies$(3) fails if $\Z$ is replaced by a Dedekind domain that is not a PID.
\end{remark}

\begin{proof}
(1)$\implies$(2): Assume (1). Let $0\neq m=(m_1,\dots,m_n)\in\ov{\Q}^n$ be such that $F(x)\equiv F(x+m)$, we assume without loss of generality that $m_1\neq0$. Now consider $F(x+tm)-F(x)$ as a polynomial in the single indeterminate $t$. Since $F(x)\equiv F(x+m)$, by induction, every $t\in\Z$ is a root of $F(x+tm)-F(x)$, so $F(x)\equiv F(x+tm)$ since a nonzero polynomial cannot have infinitely many roots. In particular, $F(x)\equiv F(x+m/m_1)$, so we may assume that $m_1=1$ by replacing $m$ with $m/m_1$.

Let $E$ be a number field containing all the $m_i$'s. Since $F$ has coefficients in $\Z$, for any $\sigma\in\Gal(E/\Q)$, we have $F(x)\equiv F(x+\sigma(m))$, hence $F(x)\equiv F(x+\Tr_{E/\Q}(m))$. Since $m_1=1$, the first coordinate of $\Tr_{E/\Q}(m)$ is $[E:\Q]\neq0$, so we may assume that $0\neq m\in\Q^n$ by replacing $m$ with $\Tr_{E/\Q}(m)$. Let $d$ be a common denominator of the $m_i$'s, then $F(x)\equiv F(x+dm)$ and $dm\in\Z^n$.

(2)$\implies$(3): Suppose that $F$ is invariant under $0\neq m\in\Z^n$. We showed that $F(x)\equiv F(x+m)\implies F(x)\equiv F(x+tm)$ for all $t\in\ov{\Q}$, so dividing $m$ by the $\gcd(m_1,\dots,m_n)$, we may assume that $\gcd(m_1,\dots,m_n)=1$, which means that $\Z^n/\Z\cdot m$ is torsion free, hence free. Therefore $\Z^n\twoheadrightarrow\Z^n/\Z\cdot m$ splits, and if $A$ is the image of the splitting, we have $\Z^n=\Z\cdot m\oplus A\cong\Z\oplus\Z^{n-1}\cong\Z^n$, so there exists $A\in\GL(n,\Z)$ such that $m=(1,0,\dots,0)A$. We then have $F(xA)\equiv F(xA+m)\equiv F((x+(1,0,\dots,0))A)$, so the polynomial $G(x):=F(xA)$ is invariant under translation by $(1,0,\dots,0)$, so $G(x_1,x_2,\dots,x_n)-G(0,x_2,\dots,x_n)$ regarded as a polynomial in $x_1$ has all integers as its roots, and therefore must be zero. We conclude that $F(xA)\equiv G(x)\equiv G(0,x_2,\dots,x_n)\in\Z[x_2,\dots,x_n]$.

(3)$\implies$(4): Suppose that $F(xA)\equiv f(x_2,\dots,x_n)$ for some $f\in\Z[x_2,\dots,x_n]$, so $F(x)\equiv F((xA^{-1})A)\equiv f((xA^{-1})_2,\dots,(xA^{-1})_n)$, so we can take $B$ to be the last $n-1$ columns of $A^{-1}$.

(4)$\implies$(5): Suppose that there exists an integral $n\times(n-1)$ matrix $B$ and $f\in\Z[x_2,\dots,x_n]$ such that $F(x)\equiv f(xB)$. Since $B$ is a linear map from $\Q^n\to\Q^{n-1}$, the null space of $B$ is nontrivial, so one can find $0\neq m\in\Z^n$ such that $mB=0$, so $F(x)\equiv f(xB)\equiv f(xB+mB)\equiv f((x+m)B)\equiv F(x+m)$. Since $m\neq0$, the reduction of $m$ modulo $p$ is zero only for finitely many $p$ (the reductions actually lie in $(\F_p)^n$).

(5)$\implies$(6): Obvious.

(6)$\implies$(1): The conditions $F(x)\equiv F(x+m)$ and $m\neq0$ defines a subscheme $S\subset\A^n_\Z$ over $\Spec\Z$, such that the closed points in the geometric fibers $S_{\ov{\F_p}}$ or $S_{\ov{\Q}}$ 
correspond to the tuples $0\neq m\in\ov{\F_p}^n$ or $\ov{\Q}^n$ such that $F(x)\equiv F(x+m)$.
By Chevalley's theorem, the image of the structural morphism $S\to\Spec\Z$ is constructible, but a constructible subset of 
$\Spec\Z$ either is finite or contains the generic point (and hence is cofinite).
Condition (5) says that infinitely many fibers of $S$ are nonempty, hence the image of the structural morphism contains the generic point $\Spec\Q$. Therefore, $S_\Q$ is a non-empty affine scheme and hence contains a closed point, which gives a nontrivial translation in $\ov{\Q}$ under which $F$ is invariant.
\end{proof}

\begin{lemma} \label{reduction}
Let $F\in\Z[x_1,\dots,x_n]$ be a $d$th-power-free polynomial. Then the reduction of $F$ modulo $p$ is $d$th-power-free for almost all primes $p$.
\end{lemma}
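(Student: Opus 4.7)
The plan is to work in the UFD $\Z[x_1,\dots,x_n]$: factor $F=c\prod_i G_i^{e_i}$ with $c\in\Z$, the $G_i$ distinct primitive irreducible polynomials, and $0<e_i<d$ (the bound coming from the $d$th-power-free hypothesis). Then $\bar F\bmod p$ will be $d$th-power-free as soon as (i) $p\nmid c$, (ii) each $\bar G_i$ is square-free in $\F_p[x_1,\dots,x_n]$, and (iii) $\bar G_i,\bar G_j$ are coprime for $i\ne j$: under these three conditions every irreducible factor of $\bar F$ divides exactly one $\bar G_i$ with multiplicity $1$, and hence appears in $\bar F$ with multiplicity exactly $e_i<d$.

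Condition (i) excludes only the prime divisors of $c$. Conditions (ii) and (iii) are jointly equivalent to the single statement that $\bar H$ is square-free in $\F_p[x_1,\dots,x_n]$, where $H:=\prod_i G_i\in\Z[x_1,\dots,x_n]$ is the radical of $F/c$. So the lemma reduces to the cleaner claim: \emph{if $H\in\Z[x_1,\dots,x_n]$ is square-free over $\Q$, then its reduction $\bar H$ is square-free over $\F_p$ for almost all primes $p$.}

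For this reduced claim, I would invoke the Jacobian criterion in the following form, valid in any characteristic: a polynomial $H$ over a field $k$ is square-free in $k[x_1,\dots,x_n]$ iff the closed subscheme $Z_k:=V(H,\partial H/\partial x_1,\dots,\partial H/\partial x_n)\subset\A^n_k$ has dimension at most $n-2$. The ``only if'' is easy because if $H=h^2 g$ with $h$ non-constant, the product rule gives $h\mid\partial H/\partial x_j$ for each $j$, so $V(h)\subset Z_k$ forces $\dim Z_k\ge n-1$. For the converse, away from pairwise intersections of irreducible components of $V(H)$ and from the singular loci of the individual components, the hypersurface $V(H)$ is smooth and some partial of $H$ is nonvanishing; the exceptional locus has codimension $\ge 2$ in $\A^n$ when $H$ is square-free in characteristic $0$. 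Applied over $\Q$ this gives $\dim Z_\Q\le n-2$ for our square-free $H$.

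The final step is to upgrade $\dim Z_\Q\le n-2$ to $\dim Z_{\F_p}\le n-2$ for almost all $p$. I would deduce this from standard facts about finite-type morphisms $Z\to\Spec\Z$: by Chevalley's constructibility theorem combined with upper semi-continuity of fiber dimension, the locus $\{p\in\Spec\Z:\dim Z_p\ge n-1\}$ is constructible and, omitting the generic point, is contained in a finite union of closed points. Together with the finitely many primes dividing $c$, only finitely many $p$ are excluded. The main obstacle is precisely making this semi-continuity step precise; a fully elementary alternative is to pick for each $G_i$ a variable $x_{j_i}$ on which it depends and use nonvanishing of the resultant $\mathrm{Res}_{x_{j_i}}(G_i,\partial G_i/\partial x_{j_i})$ together with resultants $\mathrm{Res}_{x_j}(G_i,G_j)$ to extract explicit bad primes, but the scheme-theoretic route is cleaner.
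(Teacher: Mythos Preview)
Your argument is correct (modulo the harmless swap of the labels ``if'' and ``only if'' in the Jacobian criterion), but it is a genuinely different proof from the paper's. The paper never factors $F$ or passes to the radical; instead it writes down a single finite-type scheme $S\subset\A^{2N}_\Z$ parametrizing decompositions $F=f^dg$ with $f$ non-constant, observes that $S_\Q=\varnothing$ by hypothesis, and invokes Chevalley's constructibility of the image of $S\to\Spec\Z$ to conclude $S_{\F_p}=\varnothing$ for almost all $p$. Your route---reduce to square-freeness of the radical $H$, encode this via $Z=V(H,\partial_1H,\dots,\partial_nH)$, and spread out using constructibility of the fiber-dimension function---is more hands-on and has the advantage that the resultant variant you sketch makes everything fully explicit, actually naming the bad primes. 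The paper's route is shorter and avoids both the factorization and the Jacobian discussion, at the cost of being less constructive. Both arguments ultimately rest on the same principle (a constructible subset of $\Spec\Z$ missing the generic point is finite), just applied to different auxiliary schemes. One small remark: the direction ``$H$ square-free $\Rightarrow\dim Z_k\le n-2$'' that you state can fail over imperfect fields, but you only invoke it over $\Q$, so this causes no trouble.
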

\begin{proof}
$F$ fails to be $d$th-power-free if and only if $F$ can be written as $f^d g$ such that $f$ is not constant. The coefficients of $f$ and $g$ can each be encoded in an $N$-tuple, where $N$ is the number of monomials of degrees $\le\deg F$ in $n$ indeterminates. The conditions $f^d g=F$ and that at least one of the nonconstant terms of $f$ is nonzero define a subscheme  $S\subset\A^{2N}_\Z$. If $F$ fails to be $d$th-power-free modulo infinitely many primes $p$, then $S_{\F_p}$ is nonempty for infinitely many primes, hence $S_\Q$ is nonempty (cf. proof of (6)$\implies$(1) in the previous lemma) and thus $F$ fails to be $d$th-power-free in $\ov{\Q}[x_1,\dots,x_n]$, hence in $\Q[x_1,\dots,x_n]$ (cf. proof of Lemma \ref{count}), hence in $\Z[x_1,\dots,x_n]$.
\end{proof}
Combining the previous two lemmas, we get
\begin{corollary}
Let $F\in\Z[x_1,\dots,x_n]$ be a $d$th-power-free polynomial not invariant under any nontrivial translations (in $\ov{\Q}^n$ or in $\Z^n$),
then for almost all primes $p$, the reduction of $F$ modulo $p$ satisfies the hypothesis in Lemma \ref{count} and Corollary \ref{power count} (with $T=\{0\}$).
\end{corollary}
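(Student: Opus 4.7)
The plan is to simply combine the two preceding lemmas, since the corollary is just their conjunction applied to the two parts of the hypothesis of Lemma~\ref{count}/Corollary~\ref{power count}.

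First, I would invoke Lemma~\ref{reduction} directly: since $F \in \Z[x_1,\dots,x_n]$ is assumed $d$th-power-free, the reduction $F \bmod p$ is $d$th-power-free in $\F_p[x_1,\dots,x_n]$ for all but finitely many primes $p$. This takes care of the first half of the hypothesis.

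Next, to establish $T_F = \{0\}$ (so in particular $\#T_F = 1 < \infty$) for almost all $p$, I would apply Lemma~\ref{equiv reduction} contrapositively. The assumption that $F$ is not invariant under any nontrivial translation in $\ov{\Q}^n$ (respectively $\Z^n$) is the negation of condition (1) (respectively (2)) of that lemma. By the equivalence with condition (5), this negation is the same as the negation of the statement ``for almost all primes $p$, the reduction $F \bmod p$ is invariant under some nontrivial translation in $\ov{\F_p}^n$''. Hence for almost all primes $p$, the reduction $F \bmod p$ is invariant under no nontrivial translation in $\ov{\F_p}^n$; equivalently, $T_F = \{0\}$ for the reduction.

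Excluding the finite set of exceptional primes arising from each of the two steps, both conditions hold simultaneously for almost all $p$, which is what was to be shown. There is no substantive obstacle here; the work has already been done in Lemmas~\ref{equiv reduction} and~\ref{reduction}, and this corollary merely packages them for convenient reference.
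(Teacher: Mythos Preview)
Your overall approach is exactly the paper's: the corollary is stated immediately after Lemmas~\ref{equiv reduction} and~\ref{reduction} with the single line ``Combining the previous two lemmas, we get'', so invoking those two lemmas is all that is needed.

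There is, however, a small but genuine logical slip in your second step. You appeal to the equivalence of (1) with condition~(5) of Lemma~\ref{equiv reduction}, and then write: ``this negation is the same as the negation of the statement `for almost all primes $p$, the reduction $F \bmod p$ is invariant under some nontrivial translation'. Hence for almost all primes $p$, the reduction $F \bmod p$ is invariant under no nontrivial translation.'' The last inference is a quantifier error: the negation of ``for almost all $p$, $P(p)$'' is ``for infinitely many $p$, $\neg P(p)$'', which is \emph{not} the same as ``for almost all $p$, $\neg P(p)$''. What you should use instead is condition~(6) of Lemma~\ref{equiv reduction}: its negation is ``only finitely many primes $p$ have the reduction invariant under some nontrivial translation'', which is precisely ``for almost all $p$, $T_F=\{0\}$''. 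Since (1), (2), (5), (6) are all equivalent, swapping (5) for (6) fixes the argument with no further work.
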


\subsection{Degree of a projective variety and its number of points in a box}

For applications in analytic number theory, we are interested in bounding the number of points of a quasi-affine variety $X$ in a box with coordinates in a finite field. We obtain below a bound depending only on $\dim X$, $\deg X$ and the lengths of the $(\dim X)$ longest sides of the box, which is a trivial generalization of what Tao called a Schwarz--Zippel type bound in his blog post \cite{Tao}.

\begin{lemma}\label{box}
Let $k$ be an algebraically closed field, and let $X$ be a closed subvariety of $\bbP^n_k$ of codimension $\theta$ and degree $d$. If $\{B_i\}_{i=1}^n$ are subsets of $k$, we identify $B=\prod_{i=1}^n B_i$ with a subset of $\bbP^n(k)$ via the inclusions $\prod_{i=1}^n B_i\subset k^n=\A^n(k)\subset\bbP^n(k)$. If $1\le\#B_1\le\#B_2\le\dots\le\#B_n<\infty$, we have
\[\#\parens*{X(k)\cap\prod_{i=1}^n B_i}\le d\prod_{i=\theta+1}^n \#B_i=d(\#B)\prod_{i=1}^\theta (\#B_i)^{-1}.\]
\end{lemma}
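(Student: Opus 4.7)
My plan is to proceed by induction on the ambient dimension $n$. The base case $n = 1$ is immediate: $X$ is either all of $\bbP^1_k$ (so $\theta = 0$, $d = 1$, and the claimed bound reads $\#B_1$) or a closed point (so $\theta = 1$ with $d \ge 1$, and the bound reads $d$); both hold trivially.

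For the inductive step with $n \ge 2$, I distinguish two cases according to whether the affine coordinate $x_1$ is constant on $X$. If $x_1 \equiv c$ on $X$ for some $c \in k$, then $X(k) \cap B = \varnothing$ unless $c \in B_1$; in the latter case $X$ lies inside the projective closure of $\{x_1 = c\}$, which is a $\bbP^{n-1}_k$. Viewing $X$ as a subvariety of this $\bbP^{n-1}_k$, its codimension drops to $\theta - 1$ while its degree remains $d$ (since the degree can be computed by intersection with a generic linear subspace of complementary dimension, and such a subspace may be chosen inside the hyperplane). Applying the inductive hypothesis to $X \subset \bbP^{n-1}_k$ with the smaller box $B_2 \times \cdots \times B_n$, whose sizes are still nondecreasing, yields the bound $d \prod_{i=\theta}^{n-1} \#B_{i+1} = d \prod_{i=\theta+1}^n \#B_i$.

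If instead $x_1$ is non-constant on $X$, then for each $b \in B_1$ the hyperplane $\ov{\{x_1 = b\}} \subset \bbP^n_k$ does not contain $X$. By Bezout's theorem the hyperplane section $X_b := X \cap \ov{\{x_1 = b\}}$ has total degree at most $\deg X = d$, and by Krull's Hauptidealsatz it is equidimensional of codimension $\theta$ inside the ambient $\bbP^{n-1}_k \cong \ov{\{x_1 = b\}}$. Applying the inductive hypothesis separately to each irreducible component of $X_b$ with box $B_2 \times \cdots \times B_n$ and summing over components yields $\#(X_b(k) \cap (B_2 \times \cdots \times B_n)) \le d \prod_{i=\theta+2}^n \#B_i$; summing further over the $\#B_1$ choices of $b$ produces the desired bound $d \prod_{i=\theta+1}^n \#B_i$.

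The only delicate point is that hyperplane sections of an integral variety need not themselves be integral, so strictly speaking the induction must be carried out for pure-dimensional closed subschemes, with $d$ reinterpreted as the sum of degrees of irreducible components; the lemma as stated for integral $X$ is then the special case, and Lemma \ref{ing box} follows by decomposing an arbitrary subscheme into its irreducible components and applying the bound to each. Apart from this bookkeeping, the argument is powered entirely by Bezout's theorem plus Krull's Hauptidealsatz, so I do not anticipate any substantial obstacle.
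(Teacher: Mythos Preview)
Your proposal is correct and follows essentially the same induction-on-$n$ argument as the paper: slice by a hyperplane in one coordinate, apply the induction hypothesis to the irreducible components of the slice (using B\'ezout/Hartshorne~I.7.7 to control the sum of their degrees), and sum. The only cosmetic difference is that you slice along the coordinate $x_1$ with the \emph{smallest} box, whereas the paper slices along $x_n$ with the largest; in each version one of the two cases gives the bound on the nose and the other uses the monotonicity $\#B_1\le\cdots\le\#B_n$, and your closing remark about carrying the induction for sums-of-degrees of components is exactly how the paper proceeds as well.
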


\begin{remark}\label{rmk box}
In typical applications in analytic number theory, one usually takes the $B_i$'s to be intervals in some finite prime field, but it can also be applied with $B_i$ being the whole underlying set of a finite field, for example in Remark \ref{rmk tr fn}.

If $k$ is not necessarily algebraically closed, and $X$ is instead a (locally closed) subscheme of $\A^n_k$ whose irreducible components have sum of degrees $d$, the lemma still holds because we may apply the lemma to the irreducible components of the closure of $X\times_k\ov{k}$ in $\bbP^n_{\ov{k}}$ and add up the bounds. This yields Lemma \ref{ing box}.
\end{remark}

\begin{proof}
We proceed by induction on $n$. If $n=0$, then $D=0$ and $X$ must be the single point in $\A^0=\bbP^0$, so $d=1$ and both sides of the inequality are 1. If $n>0$, for each $\ov{x}\in B_n$, let $X_{\ov{x}}$ be the closed subvariety $X\cap H_{\ov{x}}$, where $H_{\ov{x}}$ is the hyperplane $\{x_n=\ov{x}x_0\}$ in $\bbP^n_k$ (here we use $(x_1,\dots,x_n)$ as the coordinates of $\A^n_k$ and $[x_0:x_1:\dots:x_n]$ as the homogeneous coordinates of $\bbP^n_k$).

If $X\subset H_{\ov{x}}$ for some $\ov{x}\in B_n$, then $X$ has the same degree $d$ as a subvariety in $H_{\ov{x}}=\bbP^{n-1}_k$. 
Therefore
\[\#\parens*{X(k)\cap\prod_{i=1}^n B_i}=\#\parens*{X_{\ov{x}}(k)\cap\prod_{i=1}^{n-1} B_i}\le d\prod_{i=\theta}^{n-1}\#B_i\le d\prod_{i=\theta+1}^n\#B_i,\]
where the first inequality is by the induction hypothesis.

If $X\not\subset H_{\ov{x}}$ for all $\ov{x}\in B_n$, then each $X_{\ov{x}}$ is a proper closed subset of the irreducible space $X$, so it has dimension $<\dim X$, hence has codimension at least $\theta$ in $H_{\ov{x}}=\bbP^{n-1}_k$.Let $Z_1,\dots,Z_s$ be the irreducible components of $X_{\ov{x}}$. By Theorem I.7.7 in \cite{Hartshorne}, $\sum_{j=1}^s\deg Z_j\le d$. Therefore
\begin{align*}
    \#\parens*{X_{\ov{x}}(k)\cap\prod_{i=1}^{n-1} B_i} 
    & \le\sum_{j=1}^s\#\parens*{Z_j(k)\cap\prod_{i=1}^{n-1} B_i}\\
    & \le\sum_{j=1}^s\deg Z_j\prod_{i=\codim Z_j+1}^{n-1}\#B_i
    \ \le\  d\prod_{i=\theta+1}^{n-1}\#B_i
\end{align*}
by the induction hypothesis, and hence
\[\#\parens*{X(k)\cap\prod_{i=1}^n B_i}=\sum_{\ov{x}\in B_n}\#\parens*{X_{\ov{x}}(k)\cap\prod_{i=1}^{n-1} B_i}\le\#B_n\cdot d\prod_{i=\theta+1}^{n-1}\#B_i=d\prod_{i=\theta+1}^n\#B_i.\]
\end{proof}

If we have a connected closed subscheme $X\subset\bbP^n_Y$ smooth over a base scheme $Y$, i.e. a family of projective schemes parametrized by $Y$, the following lemma says that all of these schemes (the fibers), possibly base extended to the algebraic closure (the geometric fibers), are equidimensional and have the same dimension and degree, and its degree equals the sum of the degrees of its irreducible components, so if the previous lemma is applied to the irreducible components (which are varieties if equipped the reduced induced scheme structure), uniform bounds are obtained.

\begin{lemma} \label{fam deg}
Let $Y$ be a scheme and let $X$ be a connected closed subscheme of $\bbP^n_Y$ smooth over $Y$. For $y\in Y$, let $X_y$ be the fiber of $X$ over $Y$, and let $X_{\ov{y}}:=X_y\times_{{\rm k}(y)}\ov{{\rm k}(y)}$. Then there exist constants $D,d\in\N_{\ge0}$ such that each $X_{\ov{y}}$ is equidimensional of dimension $D$ and degree $d$.
\end{lemma}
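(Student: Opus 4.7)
The plan is to deduce the two claims separately, using in each case the connectedness of $X$ together with the local structure of smooth projective morphisms.

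For the dimension $D$: a smooth morphism has locally constant relative dimension, so the function $x\mapsto\dim_x X_{f(x)}$ on $X$ (where $f\colon X\to Y$ is the structural morphism) is locally constant. Since $X$ is connected, this function is a single value $D\in\N_{\ge0}$. Each non-empty fiber $X_y$ is then smooth over ${\rm k}(y)$ of pure relative dimension $D$ at every point, hence equidimensional of dimension $D$. Dimension and equidimensionality are preserved under base change of the ground field to $\ov{{\rm k}(y)}$, so every $X_{\bar y}$ is equidimensional of dimension $D$.

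For the degree $d$: smooth morphisms are flat, so $f$ is a flat projective morphism (being a closed subscheme of $\bbP^n_Y$). The standard fact that the Hilbert polynomial is locally constant in flat projective families (Hartshorne III.9.9 for Noetherian $Y$; the general case reduces to this because a closed subscheme of $\bbP^n_Y$ cut out by finitely many equations descends from a finitely generated $\Z$-subalgebra of the relevant affine rings of $Y$) says that the function $y\mapsto P_y$ assigning to $y$ the Hilbert polynomial of $X_y\subset\bbP^n_{{\rm k}(y)}$ is locally constant on $Y$. The key observation is now that although $Y$ itself need not be connected, the image $f(X)$ is connected as the continuous image of a connected space, and therefore lies in a single connected component of $Y$. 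Hence $P_y$ equals a fixed polynomial $P$ for all $y\in f(X)$, and its leading coefficient multiplied by $D!$ is an integer $d$, the common degree of the fibers $X_y$ for $y\in f(X)$. For $y\notin f(X)$ the fiber is empty and the claim is vacuous. Since the Hilbert polynomial, and therefore the degree, is stable under base change of the ground field, each $X_{\bar y}$ has degree $d$.

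The main obstacle is the degree part: once flatness of $f$ and constancy of the Hilbert polynomial in flat families are granted, the only conceptual point is the passage from ``$X$ connected'' to ``$f(X)$ lies in one connected component of $Y$'', which lets us avoid any connectedness hypothesis on $Y$ itself. A minor technical wrinkle is the Noetherian hypothesis in Hartshorne III.9.9; for the intended application in \S\ref{constr}, where $Y$ is a subscheme of the arithmetic scheme $\mathcal{P}$ and hence locally Noetherian, no reduction is required, and otherwise one descends $X\hookrightarrow\bbP^n_Y$ to a Noetherian base by the usual approximation arguments of EGA IV.8.
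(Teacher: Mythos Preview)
Your proof is correct and follows essentially the same route as the paper: locally constant relative dimension plus connectedness of $X$ gives the constant $D$, and flatness plus local constancy of the Hilbert polynomial gives the constant $d$. If anything, you are slightly more careful than the paper in two places: you explicitly pass from ``$X$ connected'' to ``$f(X)$ lies in a single connected component of $Y$'' (the paper tacitly assumes this), and you flag the Noetherian hypothesis in Hartshorne III.9.9, which is harmless in the intended application but worth noting.
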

\begin{proof}
Since $X\to Y$ is smooth, it is flat and locally of finite presentation, and each fiber $X_y$ is smooth over ${\rm k}(y)$ and hence Cohen--Macaulay. Since $X$ is also connected, by \cite[Tag 02NM]{Stacks}, $X\to Y$ has relative dimension $D$ for some $D$, i.e. $X_y$ is equidimensional of dimension $D$ for any $y$. By ibid., Tag 02NK, $X_{\ov{y}}$ is also equidimensional of dimension $D$.
Since $X\to Y$ is flat, $X_y\subset\bbP^n_{{\rm k}(y)}$ have the same Hilbert polynomial for all $y$,
and hence the same degree $d$, for all $y$. Since the Hilbert polynomial does not change under extension of base field, all $X_{\ov{y}}\subset\bbP^n_{\ov{{\rm k}(y)}}$ have the same degree $d$.
\end{proof}

\subsection{Existence of smooth decompositions}

The next lemma assures that we can get a decomposition into smooth morphisms for very general morphisms of schemes (away from finitely many primes), and we can then apply the previous lemma to each of these smooth morphisms.
\begin{lemma} \label{smth dcmpsn}
Let $X$ be a noetherian scheme and let $\varphi:X\to Y$ be a scheme morphism of finite presentation.
Then there exist finitely many locally closed subsets $\{X_i\}_{i=1}^N$ of $X$ such that the induced morphisms $\varphi|_{X_i}:(X_i)_{\rm red}\to\ov{\varphi(X_i)}_{\rm red}$ are smooth for each $i$, and such that the image of $X\setminus\bigcup_{i=1}^N X_i$ in $\Spec\Z$ is finite.
\end{lemma}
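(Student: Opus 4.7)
The plan is to proceed by noetherian induction on closed subsets $Z\subseteq X$, reducing the full statement to a single application of generic smoothness at each step. Since $X$ is noetherian, its closed subsets satisfy the descending chain condition, so it suffices to prove the following inductive statement: for any closed $Z\subseteq X$, assuming the conclusion of the lemma holds for every proper closed subset of $Z$, it also holds for $Z$.

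For the inductive step we examine the (finitely many) irreducible components $V_1,\dots,V_k$ of $Z_{\rm red}$. For each $j$ set $V_j^\circ:=V_j\setminus\bigcup_{j'\ne j}V_{j'}$, which is open and dense in $V_j$ and open in $Z$. Call $V_j$ \emph{special} if its generic point has positive residue characteristic $p_j$, and \emph{generic} otherwise. If $V_j$ is special, then since $V_j$ is irreducible with generic point mapping to $(p_j)\in\Spec\Z$, its entire image under the structure morphism $V_j\to\Spec\Z$ lies in the closure $\{(p_j)\}$; hence the union of all special components has image contained in the finite set $\{(p_j):V_j\text{ special}\}\subset\Spec\Z$. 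If every component is special, the conclusion holds trivially with $N=0$.

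Otherwise, pick a generic component $V_j$ and consider the dominant morphism $f_j:V_j\to W_j:=\ov{\varphi(V_j)}_{\rm red}$ of integral schemes of finite type. Since $\Char k(V_j)=\Char k(W_j)=0$, the extension $k(W_j)\hookrightarrow k(V_j)$ is automatically separable, so the generic fiber of $f_j$ is smooth at its generic point over $k(W_j)$. Combined with generic flatness, this produces a dense open $U_j\subseteq V_j$ on which $f_j$ is smooth. Put $Z_0:=U_j\cap V_j^\circ$: a nonempty locally closed subset of $Z$, open and dense in $V_j$, with $(Z_0)_{\rm red}$ integral, $\ov{\varphi(Z_0)}_{\rm red}=W_j$ (because $Z_0$ is dense in $V_j$), and the induced morphism $(Z_0)_{\rm red}\to W_j$ smooth. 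Since $Z\setminus Z_0$ is a proper closed subset of $Z$, the induction hypothesis supplies the remaining pieces, which together with $Z_0$ give the desired decomposition.

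The main obstacle is controlling the aggregate discard across the entire induction: in principle each inductive step contributes extra primes to the image of the leftover in $\Spec\Z$. This is resolved by the fact that noetherian induction terminates after finitely many steps, so only finitely many special components are ever produced, and each contributes a single closed point to the image of $X\setminus\bigcup_i X_i$ in $\Spec\Z$; hence this image is automatically finite, as required.
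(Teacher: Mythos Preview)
Your proof is correct and follows essentially the same route as the paper: noetherian induction on closed subsets, passage to irreducible components, and an appeal to generic smoothness when the relevant function field has characteristic zero (the paper phrases this as ``$K(Y_1)$ perfect'', which is slightly more general but plays the same role). Your final paragraph, however, is unnecessary and betrays a small confusion: once you invoke the induction hypothesis on $Z\setminus Z_0$, the finiteness of the leftover's image in $\Spec\Z$ is already part of what that hypothesis hands you, so there is no ``aggregate discard'' to track across steps---the inductive statement is self-contained, and the third paragraph already completes the argument.
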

\begin{remark}
We call such a collection $\{X_i\}_{i=1}^N$ a smooth decomposition of $\varphi$, or of $X$ relative to $Y$ (or relative to $\varphi$).
As easily seen from the proof below, the collection can be made pairwise disjoint, but we do not need that.
\end{remark}
\begin{proof}
Using noetherian induction, we need only prove the following: if $\varphi|_Z:Z\to Y$ admits a smooth decomposition for every proper closed subset $Z\subset X$ (induction hypothesis), then $\varphi$ also admits a smooth decomposition.
(Notice that a closed subscheme $Z$ of a noetherian scheme $X$ is 
of finite presentation over $X$, hence over $Y$.)
If $X$ is reducible, its finitely many irreducible components $Z_j$ are proper closed subsets, so by the induction hypothesis each $\varphi|_{Z_j}$ admits a smooth decomposition, which together yield a smooth decomposition for $\varphi$.
If $X$ is irreducible, then $X_{\rm red}$ and $Y_1:=\ov{\varphi(X)}_{\rm red}$ are integral, and the induced morphism $X_{\rm red}\to Y_1$ is still of finite presentation. Therefore, if the function field $K(Y_1)$ is perfect, there exists an open dense subset $X_1\subset X$ such that $\varphi|_{X_1}:X_1\to Y_1$ is smooth \cite[Exercise 10.40]{Gortz-Wedhorn}. By the induction hypothesis, $\varphi|_{X\setminus X_1}$ admits a smooth decomposition, which together with $X_1$ gives a smooth decomposition for $\varphi$.
If the function field is not perfect, then it has nonzero characteristic, which means that the generic point $\eta\in X$ maps to a single closed point in $\Spec\Z$, so the image of $X=\ov{\{\eta\}}$ in $\Spec\Z$ is a single point, and $\varnothing$ is a smooth decomposition of $\varphi$.
\end{proof}

\section*{Acknowledgements}
I thank my collaborator Lillian Pierce 
for raising the original question that led to the present work. I thank my advisor Michael Larsen for his guidance, his original idea from which this paper stemmed, and numerous helpful discussions.
I thank Prof. Guocan Feng, Jianxun Hu, Lixin Liu, Zheng-an Yao and especially Yen-Mei Julia Chen, 
whose reference letters and encouragement five years ago helped me out of the dark times when my applications to PhD programs failed for two consecutive years. This paper is dedicated to them.

\end{document}